\pdfoutput=1

\documentclass[11pt]{article}
\usepackage[utf8]{inputenc}
\usepackage{amssymb,amsthm}
\usepackage{mathtools}
\usepackage[usenames,dvipsnames]{xcolor}


\usepackage{hyperref}
\hypersetup{colorlinks=true,linkcolor={Brown},citecolor={Brown},urlcolor={Brown},bookmarksdepth=2}
\usepackage[compress]{cleveref}
\usepackage{thmtools, thm-restate}
\declaretheoremstyle[%
  spaceabove=.8\baselineskip,%
  spacebelow=.8\baselineskip,%
  headfont=\bfseries,%
  notefont=\normalfont,%
  bodyfont=\itshape,%
  postheadspace=.5em%
]{thms}

\declaretheoremstyle[%
  spaceabove=.8\baselineskip,%
  spacebelow=.8\baselineskip,%
  headfont=\bfseries,%
  notefont=\normalfont,%
  bodyfont=\normalfont,%
  postheadspace=.5em%
]{defn}

\numberwithin{equation}{section}
\theoremstyle{thms}
\newtheorem{thrm}[equation]{Theorem}
\newtheorem{cor}[equation]{Corollary}
\newtheorem{lem}[equation]{Lemma}
\newtheorem{prop}[equation]{Proposition}

\theoremstyle{defn}

\newtheorem{defn}[equation]{Definition}
\newtheorem*{defn*}{Definition}
\newtheorem{exa}[equation]{Example}

\newtheorem{notn}[equation]{Convention}
\newtheorem*{notn*}{Notation}
\newtheorem{rmk}[equation]{Remark}
\newtheorem{hyp}[equation]{Hypothesis}

\crefname{prop}{Proposition}{Propositions}
\crefname{hyp}{Hypothesis}{Hypotheses}
\crefname{cor}{Corollary}{Corollaries}
\crefname{lem}{Lemma}{Lemmata}
  \crefname{cns}{Construction}{Constructions}
  \crefname{def}{Definition}{Definitions}
  \crefname{rmk}{Remark}{Remarks}
  \crefname{notn}{Convention}{Conventions}
  \crefname{exa}{Example}{Examples}
  \crefname{thrm}{Theorem}{Theorems}
  \crefname{thrmprime}{Theorem'}{Theorem'}
\usepackage[shortlabels,inline]{enumitem}
  \newlist{axenum}{enumerate}{1} 
  \setlist[axenum]{label=(\Alph*)}
  \crefname{axenumi}{Axiom}{Axioms}
\setlist{itemsep=.1em}
\usepackage{amsmath}
\usepackage[full]{textcomp} 
\usepackage[p,osf]{fbb} 
\usepackage[scaled=.95,type1]{cabin} 
\usepackage[varqu,varl]{zi4}
\usepackage[T1]{fontenc} 
\usepackage[libertine]{newtxmath}
\usepackage[cal=boondoxo,bb=boondox,frak=boondox]{mathalfa}

\usepackage{textcomp}
\usepackage{tikz-cd}
\RequirePackage[margin=3.6cm]{geometry}
\usetikzlibrary{arrows,arrows.meta}
\tikzcdset{arrow style=tikz}

\newcommand\footnotewomarker[1]{%
  \begingroup
  \renewcommand\thefootnote{}\footnote{#1}%
  \addtocounter{footnote}{-1}%
  \endgroup
}

\usepackage[english]{babel}
\usepackage{csquotes}

\usepackage[
backend = biber,
style = alphabetic,
citestyle = alphabetic,
maxbibnames = 99,
url = false]{biblatex}

\addbibresource{supp-cons.bib}

\usepackage{xspace}
\usepackage{xcolor}
\newcommand{\nc}{\newcommand}
\nc{\BG}{\textup{B}G}

\nc{\dmo}{\DeclareMathOperator}
\nc{\martin}[1]{\textcolor{red}{#1}}
\dmo{\id}{id}
\nc{\kk}{k}
\nc{\base}{S} 
\nc{\V}{\mathcal{S}}
\nc{\Var}[1][]{\mathrm{Var}_{\ifblank{#1}{\kk}{#1}}}
\nc{\Varqp}[1][]{\mathrm{Var}_{\ifblank{#1}{\kk}{#1}}^{\mathrm{qp}}}
\nc{\Sub}[1][]{\mathrm{Sub}_{\ifblank{#1}{X}{#1}}}
\dmo{\DM}{DM}
\dmo{\MM}{MM}
\dmo{\DMls}{DM_{ls}}
\dmo{\DAet}{DA^{\acute{e}t}}
\dmo{\DMc}{DM_c}
\dmo{\VMHSadm}{VMHS^{adm}}
\nc{\Dbc}{\mathrm{D}^{\mathrm{b}}_{\mathrm{c}}}
\nc{\Dbqc}{\mathrm{D}^{\mathrm{b}}_{\mathrm{qc}}}
\nc{\Dbgm}{\mathrm{D}^{\mathrm{b}}_{\mathrm{gm}}}
\nc{\Dbh}{\mathrm{D}^{\mathrm{b}}_{\mathrm{h}}}
\nc{\Dbrh}{\mathrm{D}^{\mathrm{b}}_{\mathrm{rh}}}
\nc{\Dbls}{\mathrm{D}^{\mathrm{b}}_{\mathrm{ls}}}
\nc{\Dbgmls}{\mathrm{D}^{\mathrm{b}}_{\mathrm{gm,ls}}}
\nc{\Dbmhm}[1]{\mathrm{D}^{\mathrm{b}}\textup{MHM}(#1)}
\nc{\Dperf}{\mathrm{D}^{\mathrm{perf}}}
\dmo{\MHM}{MHM}
\dmo{\Perv}{Perv}
\dmo{\Pervgm}{Perv_{gm}}
\nc{\Dblsmhm}[1]{\mathrm{D}^{\mathrm{b}}_{\mathrm{ls}}\textup{MHM}(#1)}
\nc{\rat}{\mathrm{rat}}
\dmo{\LS}{LS}
\dmo{\LSgm}{LS_{gm}}
\dmo{\Cons}{Cons}
\dmo{\iC}{DE}
\nc{\hDmod}[1]{\cD_{#1}\operatorname{-mod_h}}
\nc{\cDmod}[1]{\cD_{#1}\operatorname{-mod_c}}
\dmo{\Sh}{Sh}
\dmo{\Rep}{Rep}
\nc{\dL}{\mathbf{L}}
\nc{\dR}{\mathbf{R}}
\nc{\et}{\textup{\'et}}
\nc{\proet}{\textup{pro\'et}}
\nc{\pr}{\mathcal{P}}
\dmo{\Supp}{Supp}
\dmo{\supp}{supp}
\dmo{\cone}{cone}
\dmo{\im}{im}
\nc{\cons}[1]{#1_{\mathrm{cons}}}
\nc{\zar}[1]{#1_{\mathrm{Zar}}}
\dmo{\Idl}{Idl}
\nc{\ttCat}{\mathsf{ttCat}}
\nc{\tCat}{\mathsf{Cat}^{\otimes}}
\nc{\CohSp}{\mathsf{CohSp}}
\nc{\DLat}{\mathsf{DLat}}
\nc{\TopSpc}{\mathsf{Top}}
\dmo{\cl}{cl}
\dmo{\CS}{C}
\dmo{\CSp}{D}
\dmo{\CSls}{\CS^{\textup{ls}}}
\dmo{\CSlsh}{\CS^{\textup{ls},\heartsuit}}
\dmo{\CSpls}{\CSp^{\textup{ls}}}
\nc{\radtt}[1]{\sqrt{\textup{tt}}(#1)}
\nc{\ttid}[1]{\textup{tt}(#1)}
\nc{\cA}{\mathcal{A}}
\nc{\cC}{\mathcal{C}}
\nc{\cD}{\mathcal{D}}
\nc{\cI}{\mathcal{I}}
\nc{\cK}{\mathcal{K}}
\nc{\cL}{\mathcal{L}}
\nc{\cM}{\mathcal{M}}
\nc{\cO}{\mathcal{O}}
\nc{\cP}{\mathcal{P}}
\nc{\cQ}{\mathcal{Q}}
\nc{\cR}{\mathcal{R}}
\nc{\cT}{\mathcal{T}}
\nc{\cU}{\mathcal{U}}
\nc{\cV}{\mathcal{V}}
\nc{\mfm}{\mathfrak{m}}
\nc{\mfp}{\mathfrak{p}}
\nc{\mfq}{\mathfrak{q}}
\nc{\set}[1]{\left\{\,#1\,\right\}}
\dmo{\Der}{D}
\nc{\Db}{\Der^{\textup{b}}}
\dmo{\opname}{op}
\dmo{\modf}{mod}
\nc{\op}{^{\opname}}
\nc{\into}{\mathop{\rightarrowtail}}
\nc{\onto}{\mathop{\twoheadrightarrow}}
\nc{\xto}[1]{\xrightarrow{#1}}
\nc{\isoto}{\xto{\sim}}
\nc{\isofrom}{\xfrom{\sim}}
\nc{\xfrom}[1]{\overset{#1}{\leftarrow}}
\nc{\xinto}[1]{\overset{#1}{\,\into\,}}
\nc{\xonto}[1]{\overset{#1}{\,\onto\,}}
\usepackage{bbm}
\nc{\bool}{\mathbb{2}}
\nc{\oid}{\mathbb{u}}
\nc{\cid}{\mathbb{z}}
\nc{\lcid}{\mathbb{v}}
\nc{\OID}{\mathbb{U}}
\nc{\CID}{\mathbb{Z}} 

\dmo{\SI}{\mathbb{S}} 
\dmo{\TI}{\mathbb{T}} 
\dmo{\TIc}{\mathbb{T}^c} 
\dmo{\RI}{\mathbb{R}} 
\dmo{\RIc}{\mathbb{R}^c} 
\dmo{\Op}{\Omega} 

\nc{\affine}{\mathbb{A}}
\nc{\DD}{\mathbb{D}}
\nc{\QQ}{\mathbb{Q}}
\nc{\CC}{\mathbb{C}}
\nc{\EE}{\mathbb{E}}
\nc{\FF}{\mathbb{F}}
\nc{\XX}{\mathbb{X}}
\nc{\YY}{\mathbb{Y}}
\nc{\ZZ}{\mathbb{Z}}
\nc{\one}{\mathbb{1}}
\dmo{\Spc}{Spc}
\dmo{\Spcs}{Spc^{\wedge}}
\dmo{\Spec}{Spec}
\dmo{\Hm}{H}

\nc{\loccit}{\textsl{loc.\,cit}.\@\xspace}

\nc{\fun}[3][]{%
  \ifblank{#1}{
    \operatorname{Fun}(#2,#3)}
  {
    \operatorname{Fun}^{#1}(#2,#3)}}
\nc{\Mod}[2][]{%
  \ifblank{#1}{
    \operatorname{Mod}(#2)}
  {
    \operatorname{Mod}_{#1}(#2)}}

\makeatletter
\def\namedlabel#1#2{\begingroup
  \def\@currentlabel{#2}%
  \textbf{#2}
   \label{#1}\endgroup
}
\makeatother

\nc{\bref}[1]{\textbf{\ref{#1}}}

\newcommand{\kw}{constructible sheaves, holonomic D-modules, mixed Hodge modules, motivic sheaves, constructible systems, support datum, tensor-triangular geometry, smashing spectrum, classification, reconstruction}
\newcommand{\ttl}{Supports for constructible systems}
\title{\ttl}
\AtEndPreamble{\hypersetup{
    pdfcreator    = {\LaTeX{}},
    pdfencoding   = auto,
    psdextra,
    pdfauthor     = {Martin Gallauer},
    pdftitle      = {\ttl},
    pdfsubject    = {\ttl},
    pdfkeywords   = {\kw}}}
\begin{document}
\author{Martin Gallauer\footnotewomarker{Mathematical Institute, University of Oxford, UK (gallauer@maths.ox.ac.uk); Max Planck Institute for Mathematics, Bonn, Germany (gallauer@mpim-bonn.mpg.de)}\footnotewomarker{\textit{Keywords:} \kw. \textit{Mathematics Subject Classification (2020)}: 14F20, 14F08, 14F25, 14F42, 18G80, 18M05}\thanks{The author was supported by a Titchmarsh Fellowship of the University of Oxford.}
}
\date{}
\maketitle{}
\begin{abstract}
\noindent We develop a `universal' support theory for derived categories of constructible (analytic or \'etale) sheaves, holonomic $\mathcal{D}$-modules, mixed Hodge modules and others.
As applications we classify such objects up to the tensor triangulated structure and discuss the question of monoidal topological reconstruction of algebraic varieties.
\setcounter{tocdepth}{1}
\end{abstract}
\medskip

\section{Introduction}
\label{sec:introduction}

Let $X$ be an algebraic variety
and let $M$ be a constructible or perverse sheaf (in the analytic or \'etale topology), a holonomic $\cD$-module, or a mixed Hodge module on~$X$ (whenever these make sense).
The set $\Supp(M)$ of points $x\in X$ where $x^*M$ does not vanish is a constructible subset of~$X$.
In this article we show that $\Supp(M)$ is a fundamental homological invariant of~$M$.
Namely, letting $\CS(X)$ denote one of the corresponding `derived' categories $\Dbc(X)$, $\Dbh(\cD_X)$, $\Dbmhm{X}$ (whenever these make sense), we show
(cf.~\Cref{sta:main}):
\begin{thrm}
\label{thrm:intro-spc}
The assignment $M\mapsto\Supp(M)$ is the universal support datum on~$\CS(X)$.
In other words, it induces a homeomorphism
\[
\Spc(\CS(X))\cong\cons{X}.
\]
\end{thrm}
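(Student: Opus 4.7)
The plan is to appeal to Balmer's universal characterization of the tt-spectrum. First, I would verify that $\sigma\colon M\mapsto\Supp(M)$ is a support datum on $\CS(X)$ valued in $\cons{X}$: constructibility of $\Supp(M)$ is built into the objects, and the remaining axioms (for the zero object, the unit, direct sums, tensor products, and cones) follow from the six-functor formalism, in particular from the projection formula and base change along the inclusions $x\hookrightarrow X$ of points. By universality this yields a continuous comparison map
\[
\phi\colon \cons{X}\longrightarrow\Spc(\CS(X)), \qquad x\longmapsto \ker\bigl(x^*\colon\CS(X)\to\CS(x)\bigr),
\]
where $\CS(x)$ denotes the analogous category over the residue field at $x$. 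That $\phi(x)$ is a prime rests on the singleton nature of $\Spc(\CS(x))$: over a field, each of the three incarnations of $\CS$ degenerates to a bounded derived category of finite-dimensional vector spaces or of mixed Hodge structures, all of which are well-known to have one-point Balmer spectrum.

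Next, I would show that $\phi$ is a bijection and then a homeomorphism. For injectivity, given distinct $x\ne y$, I separate them by a constructible subset $Z\subseteq X$ with $x\in Z$, $y\notin Z$; the object $M=(i_Z)_!\one_Z\in\CS(X)$ then has $\Supp(M)=Z$, so $M\in\phi(y)\setminus\phi(x)$. For surjectivity, the family $\{x^*\}_{x\in X}$ is jointly conservative (an object with empty support is zero), and hence by the standard tt-geometric principle the induced map $\coprod_x\Spc(\CS(x))\to\Spc(\CS(X))$ is surjective; since each source is the singleton $\{\phi(x)\}$, so is the image of $\phi$. The topological identification then follows from the bijective matching of basic closed sets: by construction $\phi^{-1}(\supp(M))=\Supp(M)$, and every constructible subset arises as $\Supp$ of a suitable constant-type object on it, so $\phi$ is both continuous and closed.

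The main obstacle is the surjectivity step, namely the uniform application of the \emph{jointly conservative implies surjective on spectra} principle across the variants $\Dbc$, $\Dbh(\cD_X)$, and $\Dbmhm{X}$. Joint conservativity of the geometric points reduces to the familiar but formally delicate statement that objects of these categories are detected by their stalks; running this argument uniformly is presumably the \emph{raison d'\^etre} of the constructible-systems framework developed earlier in the paper. The field-case input in the first step is classical but must be dispatched case-by-case, while the topology comparison in the second step is largely bookkeeping once the bijection is in place.
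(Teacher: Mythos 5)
Your overall skeleton (support datum $\Rightarrow$ comparison map $\phi$, primality of $\phi(x)$ from simplicity of the local category, injectivity via the objects $\one_Z$) matches the paper's strategy, but the surjectivity step has a genuine gap. Joint conservativity of the family $\{x^*\}_{x\in X}$ --- i.e.\ that an object with empty support vanishes --- only gives $\bigcap_x\pr_x=0$; it does \emph{not} imply that $\coprod_x\Spc(\CS(x))\to\Spc(\CS(X))$ is surjective. There is no such ``standard tt-geometric principle'': Balmer's surjectivity criteria require detection of $\otimes$-nilpotence of \emph{morphisms} (e.g.\ faithfulness), not merely conservativity on objects, and since primality controls only finite intersections one cannot deduce $\pr_x\subseteq\cP$ for some $x$ from $\bigcap_x\pr_x\subseteq\cP$. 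The honest analogue is Thomason's theorem for $\Dperf(X)$, whose surjectivity rests on the Hopkins--Neeman tensor-nilpotence theorem, not on joint conservativity of residue-field functors. What the paper actually proves is the classification of tt-ideals (\Cref{sta:classification}): given a tt-ideal $\cK$ and $M$ with $\Supp(M)\subseteq\Supp(\cK)$, one shows $M\in\cK$ by noetherian induction using the recollement (\Cref{sta:radical-ideals-recollement}), reduction to the regular connected lisse case, generic simplicity of $\CS(x)$ to get $\rho_x(M)\in\ttid{\rho_x(L)}$ for some $L\in\cK$, and a spreading-out lemma (\Cref{sta:spreading-out}) to promote this membership to a dense open subset. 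This mechanism is entirely absent from your proposal, and it is the heart of the theorem; the homeomorphism then follows from Balmer's classification criterion rather than from a pointwise surjectivity argument.

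A secondary inaccuracy: the local categories $\CS(x)$ at non-closed points are not bounded derived categories of finite-dimensional vector spaces (or of mixed Hodge structures). They are $2$-colimits of the lisse subcategories over shrinking neighbourhoods of $x$ --- for $\Dbc$, the derived category of the Tannakian category of local systems near $x$ --- and the fact that they have one-point spectrum is not classical. It requires Beilinson's/Nori's identification of $\Dbc$ with the derived category of its heart generically, together with the theorem that the bounded derived category of a Tannakian category in characteristic zero is a simple tt-category. Verifying this axiom \ref{ax:simple} for each example is the content of \Cref{sec:examples} and cannot be dispatched as ``well-known''.
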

Here, the left-hand side denotes the spectrum of the tensor triangulated category $\CS(X)$ as defined by Balmer~\cite{balmer:spectrum}, and the right-hand side denotes the set of points of the scheme~$X$ with the constructible topology.
We point out the importance for this result that the category $\CS(X)$ is $\QQ$-linear (in contrast to positive characteristic).
We refer to \Cref{sec:examples} for our conventions regarding each of the theories mentioned as well as for a discussion of further examples.

\medskip
One may view this result from at least two different angles.
On the one hand, it says that the support \textbf{classifies} objects in these theories up to the tensor triangulated structure.
That is, two objects in $\CS(X)$ can be built out of each other using extensions, shifts, direct summands, and tensor products with arbitrary objects, if and only if their total cohomologies have the same support.
Equivalently, the support sets up a bijection between thick tensor ideals in $\CS(X)$ and ind-constructible subsets of~$X$.
Thereby the support is seen to play the same role for these theories as the chromatic level for stable homotopy theory~\cite{MR960945} and the $\pi$-support for the representation theory of finite group schemes~\cite{MR3718455}.

\medskip
On the other hand, one may ask how much information is lost by passing from~$X$ to, say, the abelian category of constructible sheaves on~$X$.
This is investigated in~\cite{kollar-lieblich-olsson-sawin:topological-reconstruction} where Koll\'ar--Lieblich--Olsson--Sawin prove a topological version of Gabriel's \textbf{reconstruction} theorem for a certain class of varieties (proper normal varieties of dimension at least two over an uncountable algebraically closed field of characteristic zero).
It is natural to wonder about monoidal and derived analogues and the authors in \loccit specifically suggest an approach using the spectrum of tensor triangulated categories.
A precursor is Thomason's result~\cite{MR1436741} which translates to the identity $\Spc(\Dperf(X))\cong \zar{X}$ for every quasi-compact quasi-separated scheme~$X$~\cite[Theorem~8.5]{MR2280286}.
We will refine \Cref{thrm:intro-spc} to obtain the following monoidal topological analogue (\Cref{sta:zariski-reconstruction}).
\begin{thrm}
\label{thrm:intro-spcs}%
Let $\CS(X)$ be as above.
Then the underlying Zariski topological space of~$X$ is completely determined by the tensor triangulated category~$\CS(X)$.
More precisely we have
\[
\Spcs(\CS(X))\cong \zar{X}.
\]
\end{thrm}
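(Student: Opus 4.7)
The plan is to reduce \Cref{thrm:intro-spcs} to \Cref{thrm:intro-spc} via a purely topological comparison. Both $\Spc(\CS(X))$ and $\Spcs(\CS(X))$ share the same underlying set of prime tensor ideals of $\CS(X)$, so \Cref{thrm:intro-spc} already identifies the points of $\Spcs(\CS(X))$ with those of~$X$. Since $\cons{X}$ is the patch refinement of $\zar{X}$ on the same underlying set, what remains is to match the topology of $\Spcs(\CS(X))$ with that of $\zar{X}$ under this bijection.

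For the easier direction I would exhibit each Zariski-closed $Z \subseteq X$ as a closed subset of $\Spcs(\CS(X))$. The natural witness is $i_{*}\one_{Z}$, the pushforward of the tensor unit along the closed immersion $i\colon Z \hookrightarrow X$, whose support equals~$Z$. Thus every Zariski-closed subset is realized as the support of a single object and hence is closed in any support-induced topology on $\Spcs(\CS(X))$. For the reverse inclusion I would identify $\Spcs$ concretely---presumably as a spectrum whose topology is generated only by those Thomason supports that are specialization-closed---and then invoke the six-functor recollement to express each such support as arising from an object of the above form.

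The main obstacle is precisely this reverse inclusion: ruling out that some non-specialization-closed constructible subset of~$X$ becomes closed in $\Spcs$. This amounts to identifying the specialization order on tensor-ideals of $\CS(X)$ (as read off from $\Spcs$) with the scheme-theoretic specialization on~$X$. The $\QQ$-linearity hypothesis, already central to \Cref{thrm:intro-spc}, should play a decisive role here as well, and the precise definition of $\Spcs$---a topological coarsening of $\Spc$ that discards the patch-level information---is what will bring the Zariski topology rather than the constructible topology into view.
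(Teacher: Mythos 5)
There is a genuine gap, and it sits exactly where you located it. Your reduction to \Cref{thrm:intro-spc} does get you part of the way: the paper likewise uses the classification theorem to see that every tt-ideal of $\CS(X)$ is generated by the locally closed idempotents $\one_V$ (``enough idempotents''), which is what makes the canonical map $\Spc(\CS(X))\to\Spcs(\CS(X))$ a \emph{bijection}. Note, however, that this set-level identification is not automatic from the definitions as you assert: the points of $\Spcs$ are lattice homomorphisms $\SI(\cT)\to\bool$, not prime tt-ideals, and in general $\Spcs(\cT)$ can collapse to a point while $\Spc(\cT)$ is large (e.g.\ whenever $\Spc(\cT)$ is connected). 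The bijectivity is a consequence of enough idempotents, hence of the classification theorem. Your ``easier direction'' is also essentially right once rephrased: for Zariski open $U=X\setminus Z$, the recollement triangle $\one_U\to\one\to\one_Z$ exhibits $\one_U$ as an open idempotent, so $U=\supp(\ttid{\one_U})$ is a basic open of $\Spcs(\CS(X))$ and $Z$ is closed; it is the idempotency of $j_!\one$, not merely the existence of an object supported on $Z$, that matters here, since the basic opens of $\Spcs$ are supports of \emph{smashing} ideals.

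What is missing is any argument for the converse: that every open idempotent of $\CS(X)$ has Zariski-open support, i.e.\ that no merely constructible, non-open subset arises as the support of a smashing ideal. ``Identifying $\Spcs$ concretely'' and ``invoking the recollement'' do not produce this, and $\QQ$-linearity is not the operative input. The paper proves it (Proposition \ref{sta:open-idempotent-open}) under an additional Lefschetz-type axiom \bref{ax:lef} imposed on the constructible system: for integral curves $V$, $\Hm^{-1}(V)=0$ and $\Hm^0(V)\to\Hm^0(U)$ is injective for dense opens $U$. The mechanism is: if $\cid$ is a closed idempotent with dense constructible support $V$ containing a dense open $U$ with closed complement $Z$, then the retraction $\one\to\one_V$ yields a splitting $\one_Z\to\one$, hence a class in $\Hm^0(X,Z)$; on a curve the long exact sequence plus \bref{ax:lef} forces this class to vanish, so $V=X$. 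The higher-dimensional case is reduced to curves through height-one primes via prime avoidance, Krull's Hauptidealsatz, and the Jacobson property. Without this (or some substitute) your proof does not close, and indeed the statement as you have set it up cannot be proved from the axioms of a generically simple constructible system alone --- the extra cohomological hypothesis is genuinely used.
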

Here, the left-hand side denotes the \emph{smashing spectrum} of~$\CS(X)$, that is, the spectrum of the lattice of smashing tensor ideals in $\CS(X)$.
(We refer the reader to \Cref{rmk:spcs-informal} for an informal explanation why the smashing spectrum should play a role.)
The first, less precise statement in \Cref{thrm:intro-spcs} can also be proven along the lines of~\cite[\S\,5.4]{kollar-lieblich-olsson-sawin:topological-reconstruction}.
In any case, together with~\cite[Theorem~5.1.2]{kollar-lieblich-olsson-sawin:topological-reconstruction} we deduce that for proper normal varieties of dimension at least two over an uncountable algebraically closed field of characteristic zero, the tensor triangulated category $\CS(X)$ completely determines the scheme~$X$ (\Cref{sta:variety-reconstruction}).
The question as to whether that remains true without the tensor structure---in the spirit of Bondal--Orlov's reconstruction theorem~\cite{MR1818984}---would seem natural.
Similarly, it would be interesting to study the group of autoequivalences of~$\CS(X)$.

\medskip
Of course, if a phenomenon is observed in seemingly all cohomology theories it is natural to wonder about a \textbf{motivic} explanation.
Let us then denote by $\DMc(X)$ `the' category of constructible motivic sheaves on~$X$ with coefficients in a characteristic zero field, for example Beilinson motives~\cite{cisinski-deglise:DM} or Ayoub's \'etale motives~\cite{ayoub:etale-realization}.
If $X=\Spec(\kk)$ is the spectrum of a field, we had already established in~\cite[Theorem~3.8]{gallauer:tannaka-dmet} the analogue of \Cref{thrm:intro-spc} assuming `all conjectures on motives over~$\kk$'.
Here we show that this special case implies the result for arbitrary~$X$ and obtain the following result in \emph{motivic tensor-triangular geometry}.
\begin{thrm}
\label{thrm:spc-dm}
Let $X/\kk$ be a quasi-projective variety and assume that $\Spc(\DMc(\kk(x)))=\ast$ for each $x\in X$.
Then there are canonical homeomorphisms
\begin{align*}
\Spc(\DMc(X))\cong \cons{X},&&\Spcs(\DMc(X))\cong\zar{X}.
\end{align*}
\end{thrm}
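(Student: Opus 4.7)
The proof plan is to apply the general classification theorem behind \Cref{thrm:intro-spc,thrm:intro-spcs} with $\CS(X) := \DMc(X)$. Everything reduces to recognising $\DMc$ as an instance of the ``constructible system'' framework developed in the body of the paper, with the hypothesis of the theorem supplying the one non-automatic input.

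First I would verify that $X \mapsto \DMc(X)$, for either Beilinson motives (Cisinski--D\'eglise) or Ayoub's \'etale motives, carries the structure required by the framework: a $\QQ$-linear tensor-triangulated category equipped with the six-functor formalism, preservation of constructibility under the six operations, and localisation sequences $(j_!, j^*, j_*)$ and $(i^*, i_*, i^!)$ for open--closed decompositions of quasi-projective $\kk$-varieties. These ingredients are all available in the literature, and they imply that the natural support of a motivic sheaf $M \in \DMc(X)$, given by $\set{x \in X \mid x^*M \neq 0}$, is a constructible subset of~$X$.

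Next, the hypothesis $\Spc(\DMc(\kk(x))) = \ast$ for every $x \in X$ is precisely the punctual input needed to apply the main theorem: over each residue field the support datum is tautologically universal. The general machinery then propagates the result from points to all of $X$ by gluing along locally closed stratifications, yielding $\Spc(\DMc(X)) \cong \cons{X}$. Passing to the smashing spectrum uses the same refinement as in \Cref{thrm:intro-spcs}: one identifies smashing tensor ideals with Zariski-open, rather than merely ind-constructible, subsets via a specialisation-closure argument on supports.

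The main obstacle is the first step: the paper's axioms for a constructible system presumably refer not only to the six-functor formalism but also to finer features enjoyed by constructible sheaves, $\cD$-modules and Hodge modules, such as an underlying abelian (perverse) heart or a well-behaved theory of nearby and vanishing cycles, which $\DMc$ does not literally possess. The crux of the argument is therefore to either formulate the axioms abstractly enough to include motives, or to verify the relevant consequences for $\DMc$ directly --- for instance by transporting them from $\Dbc(X)$ along an appropriate realisation functor and exploiting the triviality of punctual spectra to control the kernel.
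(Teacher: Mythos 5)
Your plan is essentially the paper's proof: \Cref{sta:dm-gscs} realises $\DMc$ as a generically simple constructible system by exactly the second option in your final paragraph, transporting the axioms (in particular the otherwise problematic lisse/conservativity axiom \bref{ax:cons}) from $\Dbc$ along the Betti or \'etale realisation via \Cref{sta:lift-cs-conservative}, and the hypothesis enters precisely where you guess --- the realisation $\Re_Y$ is conservative because the family $\rho_x:\DMc(Y)\to\DMc(x)\simeq\DMc(\kk(x))$ is conservative by Ayoub's continuity and localisation results, and each $\Re_x$ is conservative since its source is a simple tt-category. The only ingredients you omit are the compatibility of the realisation with $j_*$ and $i_*$ (needed for \Cref{sta:lift-cs-conservative}) and the verification of the Lefschetz-type axiom \bref{ax:lef} required for the smashing-spectrum statement, both of which are supplied by Ayoub's computations for \'etale motives.
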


\bigskip
The article is organized as follows.
In \Cref{sec:CS} we define \emph{generically simple constructible systems} which abstract the relevant features of the association $X\mapsto \CS(X)$.
The theories mentioned in \Cref{thrm:intro-spc,thrm:spc-dm} are shown to be examples in \Cref{sec:examples}, which involves establishing that $\CS(X)$ is a simple tensor triangulated category \emph{generically}.
This uses crucially (a step in) Beilinson's argument on the derived category of perverse sheaves~\cite{beilinson87}, as well as~\cite{gallauer:tannaka-dmet} which shows that the derived category of a Tannakian category in characteristic zero is simple.
In \Cref{sec:support} (resp.\ \Cref{sec:toprec}) we prove \Cref{thrm:intro-spc} (resp.\ \Cref{thrm:intro-spcs}) for (suitable) generically simple constructible systems, including the one of \Cref{thrm:spc-dm}.

\paragraph*{Acknowledgments}

Many thanks to Paul Balmer and Burt Totaro for precious feedback on a draft of this article.
A referee report led to many corrections and significant improvements in the text.
We are very grateful to the author of that report.

\section{Constructible systems}
\label{sec:CS}
In order to deal with constructible sheaves, holonomic $\cD$-modules, mixed Hodge modules etc.\ all simultaneously, it will be convenient to introduce a `minimal' set of axioms such theories should satisfy.
This is done in \Cref{sec:CS-defn}.
In \Cref{sec:new-from-old}, we provide some useful criteria to verify these axioms.
\subsection{Definition and basic properties}
\label{sec:CS-defn}

\begin{notn}
All schemes are assumed to be noetherian and reduced\footnote{Reducedness is entirely unnecessary and for convenience only; see \Cref{rmk:alternative-definition}}.
We fix a base scheme $\base$ and a full subcategory $\V$ of the category of $\base$-schemes satisfying the following property: If $V\into X$ is an immersion of $\base$-schemes and $X$ belongs to $\V$ then so does $V$.
\end{notn}

\begin{exa}
The structure map $X\to \base$ will play no role in this section and is there only for flexibility.
Indeed, some of the theories considered in \Cref{sec:examples} are not defined on all schemes.
The main examples for us are the following.
\begin{enumerate}[(a)]
\item $\V=\Var$, the category of \emph{$\kk$-varieties}, that is, separated, finite type schemes over a field~$\kk$.
\item $\V=\Sub[\base]$, the poset spanned by subschemes of $\base$.
\end{enumerate}
\end{exa}

Let $\ttCat$ denote the $(2,1)$-category of essentially small tensor triangulated categories (or, tt-categories for short) together with exact tensor functors (or, tt-functors for short) and natural isomorphisms of such.

\begin{defn}
\label{cs}
A \emph{constructible system} (on $\V$) is a pseudo-functor $\CS:\V\op\to\ttCat$ 
satisfying the axioms \bref{ax:loc} and \bref{ax:cons} below. 
\end{defn}

Given a morphism $f:X\to Y$ in $\V$, we denote the tt-functor $\CS(f)$ by $f^*$.
Sometimes, especially if $f$ is an immersion, we denote $f^*M$ by $M|_X$.
When $\V=\Sub[\base]$ we also speak of a constructible system \emph{on $\base$}.

\begin{notn}
\label{notn:C(x)}
Assume $X\in\V$ is integral, with generic point $x\in X$.
We denote by
\[
\CS(x):=\CS(x\in X):=\textup{2-colim} \CS(U)
\]
where the colimit in $\ttCat$ is over non-empty open subsets~$U\subseteq X$.\footnote{For a brief discussion of 2-colimits in $\ttCat$ see~\cite[Remark~8.3]{gallauer:tt-fmod}.}

If $X$ is not assumed integral, and $x\in X$ is an arbitrary point, with closure $\cl(x)\subseteq X$, we let $\CS(x):=\CS(x\in \cl(x))$.
Alternatively, it is the 2-colimit of $\CS(V)$ where $V$ runs through locally closed neighborhoods of~$x$ in~$X$.
We denote by $\rho_x:\CS(X)\to\CS(x)$ the canonical functor.
\end{notn}

Completing \Cref{cs} we impose the following axioms on $\CS$:
\begin{itemize}
\item \namedlabel{ax:loc}{(Loc)}\kern-.2em\footnote{standing for `localization'} For every open-closed decomposition $U\xinto{j}X\xfrom{i}Z$ ($U$ open, $Z$ the closed complement), the functors
  \[
    \CS(U)\xleftarrow{j^*}\CS(X)\xrightarrow{i^*}\CS(Z)
  \]
  define a recollement (see \Cref{recollement} below).
  \item \namedlabel{ax:cons}{(Lisse)}
  There exists, for each regular $U\in\V$, a full tt-subcategory $\CSls(U)\subseteq\CS(U)$ (of so-called \emph{lisse} objects) such that:
  \begin{enumerate}[(1)]
  \item  For every $X\in\V$ and $M\in\CS(X)$ there exists a dense regular open subset $U\subseteq X$ such that $M|_U\in\CSls(U)$.
  \item \label{lisse:restriction} For every immersion $f$ between regular schemes, the functor $f^*$ preserves lisse objects.
  \item \label{lisse:conservative} For regular, connected $X\in\V$ and $x\in X$, the functor $\rho_x|_{\CSls(X)}:\CSls(X)\to\CS(x)$ is conservative.
\end{enumerate}
Given a constructible system $\CS$ we will often implicitly assume that a choice of $\CSls$ as in \bref{ax:cons} has been made.

\end{itemize}

\begin{rmk}
\label{rmk:terminology}%
These axioms are modeled on the behavior of categories of constructible sheaves, the lisse objects corresponding to local systems.
(In examples of interest, lisse objects are often the rigid objects.)
On the other hand, the examples of constructible systems we have in mind (\Cref{sec:examples}) all arise from (subfunctors of) fully-fledged six-functor formalisms, or, \emph{coefficient systems}~\cite{drew:MHM,drew-gallauer:usf,2021arXiv211210456G}.
The term ``constructible system'' is an attempt at capturing these two aspects.
\end{rmk}

\begin{rmk}
\label{rmk:alternative-definition}%
By \bref{ax:loc}, there is a unique way to extend a constructible system $\CS$ to non-reduced schemes, namely by setting $\CS(X):=\CS(X_{\textup{red}})$.
This extended constructible system satisfies the analogous axioms, and the arguments in the article go through with minor modifications.
In particular, both \Cref{sta:main,sta:zariski-reconstruction} remain true.
\end{rmk}

\begin{notn}
  \label{recollement}%
  Let ${\cal T}_U\xleftarrow{j^*}{\cal T}\xrightarrow{i^*}{\cal T}_Z$ be triangulated categories and exact functors between them.
  This is called a \emph{recollement} if
  \begin{enumerate}[(1)]
  \item $j^*$ admits a fully faithful right adjoint $j_*$;
  \item $i^*$ admits a fully faithful right adjoint $i_*$ whose essential image is the kernel of $j^*$.
  \end{enumerate}

\end{notn}

\begin{rmk}
  \label{recollement-bbd}
  By \cite[Proposition~4.13.1]{krause:localization-theory-tricat}, a recollement as in \Cref{recollement} induces a recollement in the sense of~\cite[\S\,1.4.3]{MR751966}:
  two exact functors ${\cal T}_Z\xrightarrow{i_*}{\cal T}\xrightarrow{j^*}{\cal T}_U$ satisfying the following properties:
  \begin{enumerate}[(1)]
  \item $i_*$ admits left and right adjoints $i^*$, $i^!$, respectively.
  \item $j^*$ admits left and right adjoints $j_!$, $j_*$, respectively.
  \item $j^*i_*=0$.
  \item $i_*, j_*, j_!$ are fully faithful.
  \item There are functorial triangles
    \[
      j_!j^*\to \id\to i_*i^*\to^+,\qquad i_*i^!\to\id\to j_*j^*\to^+
    \]
  \end{enumerate}
From now on we will identify these two equivalent sets of data.
\end{rmk}

\begin{notn}
\label{notn:extension-by-zero}%
Let $f:V\into X$ be an immersion in $\V$.
We associate a functor $f_!:\CS(V)\to\CS(X)$, called \emph{extension by zero} as follows.
Factor $f=ij$ as an open immersion~$j$ followed by a closed immersion~$i$ and define $f_!=i_*j_!$.
It follows from \bref{ax:loc} that this is independent of the choice of factorization, up to (canonical) isomorphism.
(And the `same' functor is obtained  by factoring $f$ as a closed immersion followed by an open immersion.)
We often denote the composite $f_!f^*$ by $(-)_V$.
\end{notn}

The following \emph{projection formul\ae} will be useful in the sequel.
Note that the morphisms `dual' to~(\ref{eq:proj-formula}) involving the pairs of functors $(j^*, j_*)$ and $(i_*,i^!)$ are in general not invertible.
\begin{lem}
\label{sta:projection-formulae}%
Let $\CS$ be a constructible system and let $j,i$ as in \bref{ax:loc}.
For $K\in\CS(U)$, $L\in \CS(X)$, $M\in\CS(Z)$ the following canonical maps are invertible:
\begin{equation}
\label{eq:proj-formula}%
j_!(M\otimes j^*N)\xto{\sim} j_!M\otimes N,\qquad i_*M\otimes N\xto{\sim} i_*(M\otimes i^*N)
\end{equation}
\end{lem}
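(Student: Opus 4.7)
The plan is to use the recollement structure from \bref{ax:loc} (enriched as in \Cref{recollement-bbd}) to verify invertibility `pointwise'. The key general fact is that the pair $(j^*, i^*)$ is jointly conservative on $\CS(X)$: given a morphism $f$ in $\CS(X)$, the triangle $j_!j^*A\to A\to i_*i^*A\to^{+}$ is natural in $A$, so if both $j^*f$ and $i^*f$ are invertible, then $j_!j^*f$ and $i_*i^*f$ are invertible, whence $f$ is invertible by the standard fact that in a morphism of distinguished triangles, if two of the three vertical maps are isomorphisms then so is the third. It therefore suffices to check that each projection morphism becomes an isomorphism after applying $j^*$ and after applying $i^*$.

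For the first formula, I would define $j_!(M\otimes j^*N)\to j_!M\otimes N$ as the $(j_!\dashv j^*)$-transpose of the composite
\[
M\otimes j^*N \isoto j^*j_!M\otimes j^*N \isoto j^*\bigl(j_!M\otimes N\bigr),
\]
whose first arrow uses the unit $\id\isoto j^*j_!$ (invertible since $j_!$ is fully faithful) and whose second arrow uses that $j^*$ is strong monoidal. Applying $j^*$ to the projection morphism and unwinding the triangle identities recovers precisely this composite, hence an isomorphism. Applying $i^*$, the left-hand side vanishes because $i^*j_!=0$---this follows by Yoneda from $\Hom(i^*j_!-,-)=\Hom(-,j^*i_*-)=0$ via $j^*i_*=0$---and the right-hand side vanishes as well because $i^*$ is monoidal and $i^*j_!M=0$.

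For the second formula, the map $i_*M\otimes N\to i_*(M\otimes i^*N)$ will be the $(i^*\dashv i_*)$-transpose of
\[
i^*\bigl(i_*M\otimes N\bigr) \isoto i^*i_*M\otimes i^*N \isoto M\otimes i^*N,
\]
combining the monoidality of $i^*$ with the counit $i^*i_*\isoto\id$ (invertible since $i_*$ is fully faithful). Applying $i^*$ one recovers this composite; applying $j^*$ both sides vanish via $j^*i_*=0$ and the monoidality of $j^*$. The only real work is a careful bookkeeping of the triangle identities and the compatibility of units/counits with the tensor structures; I expect this to be the most tedious---though entirely routine---aspect of the argument, and no deeper input beyond the recollement and strong monoidality of $j^*, i^*$ should be required.
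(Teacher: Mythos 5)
Your argument is exactly the paper's: the paper also deduces joint conservativity of $(j^*,i^*)$ from the first recollement triangle and then notes that applying these two functors to the projection maps yields isomorphisms, leaving the details (which you spell out correctly, including $i^*j_!=0$ by adjunction from $j^*i_*=0$) as a routine check. Your proposal is correct and essentially identical in approach.
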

\begin{proof}
  We note that the pair $(j^*,i^*)$ is conservative, as follows from the first triangle in \Cref{recollement-bbd}.
  It is straight-forward to check that upon applying these two functors, the maps in the statement become isomorphisms thus the claim.
\end{proof}

\begin{rmk}
\label{rmk:C(x)-lisse}%
It follows from \bref{ax:cons} that $\CS(x)=\CSls(x):=\textup{2-colim}\CSls(U)$ over the regular locally closed neighborhoods of~$x$.
Indeed, the natural comparison functor $\CSls(x)\to\CS(x)$ is fully faithful, and \bref{ax:cons} ensures that it is essentially surjective too.
\end{rmk}

In this article we will not be able to say much about constructible systems in general.
Instead we will restrict to `generically simple' ones in the following sense.
\begin{defn}
  \label{scs}%
  Let $\CS$ be a constructible system.
  It is called \emph{generically simple} if it satisfies the following additional axiom.
\begin{itemize}
\item \namedlabel{ax:simple}{(GenS)} For every $x\in X\in\V$, the tt-category $\CS(x)=\CSls(x)$ is simple.
\end{itemize}
\end{defn}
Here we call a tt-category \emph{simple} if it has exactly two thick tensor ideals.
These are necessarily the zero ideal and the whole category.\footnote{If the tt-category is rigid, this is equivalent to the definition used in~\cite[p.\,119]{gallauer:tannaka-dmet}.
  In the examples of constructible systems of interest to us, the category in question will always be rigid.}
(Recall that a thick tensor ideal (or, tt-ideal for short) is a full triangulated subcategory stable under direct summands and tensoring with arbitrary objects.)

\begin{defn}
\label{defn:primes}%
Fix a generically simple constructible system $\CS$ and let $x\in X\in\V$.
We denote by $\pr_x$ the kernel of $\rho_x:\CS(X)\to\CS(x)$ (see \Cref{notn:C(x)}).
By \bref{ax:simple}, $\pr_x$ is a prime tt-ideal (that is, $\one\notin\pr_x$, and if $M\otimes N\in\pr_x$ then $M\in\pr_x$ or $N\in\pr_x$).
\end{defn}

\begin{lem}
\label{sta:primes-functoriality}%
With the assumptions of \Cref{defn:primes} let $f:X\to Y$ be a morphism in $\V$.
Then we have for all $x\in X$:
\[
\pr_{f(x)}=(f^*)^{-1}\pr_x
\]
\end{lem}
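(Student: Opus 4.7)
The plan is to exhibit the kernel computation as the composition of two natural operations: restriction to the residue category at $y=f(x)$, followed by the functor induced by $f$ on these residue categories. Showing that the latter is conservative reduces the lemma to the definitions of $\pr_x$ and $\pr_{f(x)}$.

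First I would construct, from the morphism $f$, an induced tt-functor $f_{y,x}\colon \CS(y)\to\CS(x)$. To do this, observe that the restriction of $f$ to closures yields a morphism $\bar f\colon \cl(x)\to \cl(y)$ in $\V$. For every non-empty open $V\subseteq \cl(y)$, the preimage $\bar f^{-1}(V)\subseteq \cl(x)$ is a non-empty open containing~$x$, and so the composite
\[
\CS(V)\xrightarrow{\bar f|_{\bar f^{-1}(V)}^{*}}\CS(\bar f^{-1}(V))\longrightarrow \CS(x)
\]
is defined. Pseudo-functoriality of $\CS$ ensures that this family is compatible with further shrinking of~$V$, so the universal property of the 2-colimit produces the desired $f_{y,x}$. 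The same pseudo-functoriality, applied to the outer square formed by the inclusions $\bar f^{-1}(V)\hookrightarrow X$ and $V\hookrightarrow Y$ together with $f$ and $\bar f|_{\bar f^{-1}(V)}$, gives a canonical isomorphism
\[
\rho_x\circ f^{*}\;\cong\;f_{y,x}\circ \rho_{y}.
\]

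Next I would check that $f_{y,x}$ reflects the zero object. Being a tt-functor, its kernel is a tt-ideal of $\CS(y)$. By \bref{ax:simple} the category $\CS(y)$ is simple, so this kernel is either the whole of $\CS(y)$ or just $\{0\}$. The former is excluded because $f_{y,x}(\one)=\one$ is non-zero in $\CS(x)$ (again by simplicity of $\CS(x)$, which in particular forces $\one\neq 0$). Hence an object $N\in\CS(y)$ satisfies $f_{y,x}(N)=0$ if and only if $N=0$.

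Combining these two steps, for $M\in\CS(Y)$ we have $f^{*}M\in\pr_x$ if and only if $\rho_x(f^{*}M)=0$, equivalently $f_{y,x}(\rho_{y}(M))=0$ via the commuting square, which by the conservativity (on zero) of $f_{y,x}$ is equivalent to $\rho_{y}(M)=0$, i.e.\ $M\in\pr_{f(x)}$. This yields the stated equality $(f^{*})^{-1}\pr_x=\pr_{f(x)}$. The only subtle point is the construction of $f_{y,x}$ and the verification of the commuting square at the level of 2-colimits; everything else is formal once \bref{ax:simple} is invoked.
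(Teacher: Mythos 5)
Your proposal is correct and follows essentially the same route as the paper: the paper's proof is exactly the commutative square $\rho_x\circ f^*\cong f_{y,x}\circ\rho_{f(x)}$ together with the observation that a tt-functor between simple tt-categories is conservative (equivalently, reflects zero), which you prove rather than cite. Your extra care in constructing $f_{y,x}$ on the 2-colimits via preimages of dense opens of $\cl(f(x))$ is exactly the implicit content of the paper's diagram.
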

\begin{proof}
This follows from the commutativity of the square
\[
\begin{tikzcd}
C(Y)
\ar[r, "f^*"]
\ar[d, "\rho_{f(x)}"]
&
C(X)
\ar[d, "\rho_x"]
\\
C(f(x))
\ar[r, "f^*"]
&
C(x)
\end{tikzcd}
\]
together with the fact that a tt-functor between simple tt-categories is conservative.
\end{proof}

\subsection{New from old}
\label{sec:new-from-old}

We now discuss a few criteria that will be useful in checking the axioms of a (generically simple) constructible system.

\begin{lem}
\label{sta:lift-cs-conservative}%
Let $\CS$ be a constructible system and let $\omega:\CSp\to\CS$ be a pseudo-natural transformation of pseudo-functors $\V\op\to\ttCat$ such that:
\begin{enumerate}[(1)]
\item for each $X\in\V$, the functor $\omega_X:\CSp(X)\to\CS(X)$ is conservative;
\item for each immersion $f:V\into X$, the functor $f^*:\CSp(X)\to\CSp(V)$ admits a right adjoint $f_*$, and the transformation  $\omega_X f_*\to f_*\omega_V$ is invertible.
\end{enumerate}
Then $\CSp$ is a constructible system.
\end{lem}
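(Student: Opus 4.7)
The plan is to define lisse objects in $\CSp$ by pullback, namely $\CSpls(U):=\set{M\in\CSp(U)\mid\omega_U(M)\in\CSls(U)}$ for each regular $U\in\V$, and then verify axioms \bref{ax:loc} and \bref{ax:cons} for $\CSp$ by reducing to the corresponding statements for $\CS$ via conservativity of $\omega_X$. Throughout, the key compatibilities we exploit are: pseudo-naturality of $\omega$, which gives $\omega_V\circ f^*\cong f^*\circ\omega_X$ for every morphism $f:V\to X$, and hypothesis~(2), which gives $\omega_X\circ f_*\cong f_*\circ\omega_V$ for every immersion $f$; together these imply that units and counits of the adjunctions $(f^*,f_*)$ in $\CSp$ correspond, under $\omega$, to the units and counits in $\CS$.

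To verify \bref{ax:loc}, fix an open-closed decomposition $U\xinto{j}X\xfrom{i}Z$. The right adjoints $j_*$ and $i_*$ exist in $\CSp$ by hypothesis~(2). Full faithfulness of $j_*$ amounts to invertibility of the counit $j^*j_*\to\id$; applying $\omega_U$ and using the commutativities above identifies this counit with the corresponding counit in $\CS$, which is invertible since $\CS$ satisfies \bref{ax:loc}. Conservativity of $\omega_U$ then gives the claim in $\CSp$. The same argument works for $i_*$. For the identity $\im(i_*)=\ker(j^*)$: the inclusion $\im(i_*)\subseteq\ker(j^*)$ follows from $j^*i_*=0$, which is verified by applying $\omega_U$ and using conservativity; for the reverse inclusion, given $M\in\CSp(X)$ with $j^*M=0$, the unit $M\to i_*i^*M$ is identified under $\omega_X$ with the unit $\omega_X M\to i_*i^*\omega_XM$ in $\CS$, which is invertible because $j^*\omega_XM\cong\omega_Uj^*M=0$ and $\CS$ satisfies \bref{ax:loc}; conservativity of $\omega_X$ then yields $M\cong i_*i^*M\in\im(i_*)$.

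To verify \bref{ax:cons}, note first that $\CSpls(U)$ is a full tt-subcategory of $\CSp(U)$ because $\CSls(U)\subseteq\CS(U)$ is such and $\omega_U$ is a tt-functor. Property~(1) is then immediate: given $M\in\CSp(X)$, apply \bref{ax:cons}(1) for $\CS$ to $\omega_X(M)$ to produce a dense regular open $U\subseteq X$ with $\omega_U(M|_U)\cong\omega_X(M)|_U\in\CSls(U)$, so $M|_U\in\CSpls(U)$. Property~(2) follows from $\omega_V(f^*M)\cong f^*(\omega_XM)$ and the corresponding property for $\CS$. For property~(3), use that the universal property of the $2$-colimit defining $\CSp(x)$ produces a functor $\omega_x:\CSp(x)\to\CS(x)$ with $\omega_x\circ\rho_x\cong\rho_x\circ\omega_X$; hence if $M\in\CSpls(X)$ satisfies $\rho_xM=0$, then $\rho_x\omega_XM\cong\omega_x\rho_xM=0$, and since $\omega_XM\in\CSls(X)$, property~(3) for $\CS$ forces $\omega_XM=0$, and finally conservativity of $\omega_X$ gives $M=0$.

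The only delicate point in this strategy is the claim that units and counits in $\CSp$ correspond under $\omega$ to those in $\CS$ for the adjunctions $(j^*,j_*)$ and $(i^*,i_*)$; this is precisely the content of the Beck-Chevalley hypothesis~(2) combined with pseudo-naturality of $\omega$ on the pullback side, and once it is in place each of the seven verifications above is a direct application of conservativity of $\omega_X$. I do not expect any further obstacles.
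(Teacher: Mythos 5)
Your proposal is correct and follows essentially the same route as the paper: define $\CSpls$ as the preimage of $\CSls$ under $\omega$, and reduce each verification of \bref{ax:loc} and \bref{ax:cons} to the corresponding statement in $\CS$ via the identification of units/counits (using hypothesis~(2) and pseudo-naturality) together with conservativity of $\omega$. The paper merely states that \bref{ax:cons} is clear where you spell out the three conditions, including the induced functor $\omega_x$ on $2$-colimits for condition~(3); otherwise the arguments coincide.
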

\begin{proof}
With $\CSpls$ the preimage of $\CSls$ under~$\omega$, the axiom \bref{ax:cons} is clear.
For \bref{ax:loc}, let $j:U\into X$ and $i:Z\into X$ be as in the axiom.
Note that fully faithfulness of $j_*$ (resp.\ $i_*$) is equivalent to the counit $j^*j_*\to \id$ (resp.\ $i^*i_*\to\id$) being invertible.
These are mapped to the corresponding counits in $\CS$, by assumption, and are therefore invertible by conservativity of~$\omega$.
Similarly, $\omega$ allows us to conclude that $j^*i_*=0$ so that the image of $i_*$ is contained in~$\ker(j^*)$.
Conversely, let $M\in\ker(j^*)$ and consider the unit $M\to i_*i^*M$.
Since $\omega$ inverts this morphism it is invertible and we win.
\end{proof}

\begin{cor}
\label{sta:cs-sub}%
Let $\CS$ be a constructible system and let $\CSp$ be a sub-pseudo-functor stable under $f_*$ for each immersion~$f$.
\begin{enumerate}[(a)]
\item Then $\CSp$ is a constructible system.
\item Assume that $\CSp(x)$ is rigid for each $x\in X\in\V$. If $\CS$ is generically simple then so is $\CSp$.
\end{enumerate}
\end{cor}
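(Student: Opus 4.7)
Part~(a) follows immediately from \Cref{sta:lift-cs-conservative} applied to the canonical inclusion $\iota:\CSp\hookrightarrow\CS$. Each component $\iota_X:\CSp(X)\to\CS(X)$ is fully faithful (by the sub-pseudo-functor hypothesis) and hence conservative, giving condition~(1). For condition~(2), stability of $\CSp$ under $f_*$ provides, for each immersion $f:V\into X$, a restriction of $f_*:\CS(V)\to\CS(X)$ to a functor $\CSp(V)\to\CSp(X)$; full faithfulness of $\iota$ transfers the adjunction $(f^*,f_*)$ from $\CS$ to $\CSp$, and the comparison $\iota_X f_*\to f_*\iota_V$ is literally the identity. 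So $\CSp$ is a constructible system.

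For part~(b) we must verify axiom~\bref{ax:simple} for $\CSp$, namely that $\CSp(x)$ is simple for each $x\in X\in\V$. The comparison $\omega_x:\CSp(x)\to\CS(x)$ arises as a filtered 2-colimit (over regular locally closed neighborhoods of $x$) of the fully faithful inclusions $\CSp(U)\hookrightarrow\CS(U)$ and is therefore itself fully faithful. Let $I\subsetneq\CSp(x)$ be any proper tt-ideal and set $\bar I:=\langle\omega_x(I)\rangle\subseteq\CS(x)$. By simplicity of $\CS(x)$, $\bar I$ equals either $0$ or $\CS(x)$. The first alternative immediately forces $I=0$ by conservativity of $\omega_x$. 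We want to rule out the second: if $\one_{\CS(x)}\in\bar I$, then using rigidity of $\CSp(x)$---specifically the dual $N^\vee\in\CSp(x)$ of a chosen $0\neq N\in I$ and the associated evaluation/coevaluation morphisms---one exhibits $\one_{\CSp(x)}$ as built from objects of $I$ by tt-operations performed entirely inside $\CSp(x)$, contradicting properness of $I$.

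The main obstacle is this last transfer step. The extension–contraction of tt-ideals along a fully faithful tt-embedding is in general not the identity, and rigidity of $\CSp(x)$ is precisely what allows one to bootstrap from $\one\in\bar I$ to $\one\in I$, by replacing auxiliary objects of $\CS(x)$ in a generating relation for the unit by duals $N^\vee$ already available in $\CSp(x)$. An equivalent formulation is on Balmer spectra: the fully faithful tt-functor $\omega_x$ induces a map $\Spc(\CS(x))\to\Spc(\CSp(x))$ whose image is the single prime $\omega_x^{-1}(0)=0_{\CSp(x)}$, and rigidity of $\CSp(x)$ is exactly what forces this image to be all of $\Spc(\CSp(x))$, yielding simplicity.
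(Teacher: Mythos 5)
Part (a) is correct and is exactly the paper's route: apply \Cref{sta:lift-cs-conservative} to the inclusion $\CSp\hookrightarrow\CS$; your verification of conditions (1) and (2) is fine.

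For part (b) there is a genuine gap, and you have located it yourself: the passage from $\one_{\CS(x)}\in\ttid{\omega_x(I)}$ to $\one_{\CSp(x)}\in I$. You assert that rigidity of $\CSp(x)$ lets one ``replace auxiliary objects of $\CS(x)$ in a generating relation for the unit by duals $N^\vee$'', but no such manipulation is exhibited, and none is available at this level of generality: a witness that $\one$ lies in the tt-ideal generated by $\omega_x(N)$ inside $\CS(x)$ involves tensoring with arbitrary objects of $\CS(x)$ and forming cones and summands there, and these cannot simply be traded for operations internal to $\CSp(x)$. Your spectral reformulation makes the slip visible: the claim that ``rigidity of $\CSp(x)$ is exactly what forces the image of $\Spc(\omega_x)$ to be all of $\Spc(\CSp(x))$'' is false as stated --- rigidity of the source alone never forces surjectivity on spectra (consider $\Dperf(\ZZ)\to\Dperf(\QQ)$). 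What is needed is rigidity of the source \emph{together with} the hypothesis that the functor detects $\otimes$-nilpotence of morphisms (here supplied by faithfulness), and even then surjectivity of $\Spc$ is a substantive theorem, not a formal consequence of evaluation/coevaluation identities.

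This is precisely how the paper argues: the canonical functor $\omega_x:\CSp(x)\to\CS(x)$ is faithful because a filtered colimit of faithful functors is faithful, and then Balmer's surjectivity theorem (Corollary~1.8 of \emph{On the surjectivity of the map of spectra associated to a tensor triangulated functor}) applies, since $\CSp(x)$ is rigid by assumption. That gives a surjection $\Spc(\CS(x))\onto\Spc(\CSp(x))$; simplicity of $\CS(x)$ makes the source a point, hence $\Spc(\CSp(x))$ is a point, and rigidity (all tt-ideals radical) then yields exactly two tt-ideals in $\CSp(x)$. To repair your argument, replace the evaluation/coevaluation sketch by a citation of this result (or a reproduction of its proof); the rest of what you wrote, including full faithfulness of $\omega_x$ and the reduction to the two alternatives for $\ttid{\omega_x(I)}$, is fine but does not close the case $\ttid{\omega_x(I)}=\CS(x)$ on its own.
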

\begin{proof}
The first statement follows directly from \Cref{sta:lift-cs-conservative}.
For the second statement let $x\in X\in\V$.
Since a filtered colimit of faithful functors is faithful we see that the canonical tt-functor $\CSp(x)\to \CS(x)$ is faithful.
The claim now follows from~\cite[Corollary~1.8]{balmer:surjectivity}.
\end{proof}

The following criterion for generic simplicity leverages an observation from~\cite{gallauer:tannaka-dmet} together with Deligne's internal characterization of Tannakian categories.
For both of these the characteristic zero assumption is crucial.
Here we denote by $\tCat$ the (2,1)-category of essentially small symmetric monoidal categories.
\begin{prop}
\label{sta:cs-gen-simple-criterion}%
Let $\EE$ be a characteristic zero field.
Let $\cA_\bullet:I\to\tCat$ be a filtered diagram of $\EE$-linear Tannakian categories\footnote{By convention, this involves the condition $\EE=\mathrm{End}(\one)$.} and $\EE$-linear exact transition functors.
Denote by $\cA=\textup{2-colim}_i\cA_i$ the colimit.
Then the tt-category $\Db(\cA)$ is simple.
\end{prop}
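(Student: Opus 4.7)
The plan is to reduce the statement to the main result of \cite{gallauer:tannaka-dmet} recalled in the introduction: the bounded derived category of a Tannakian $\EE$-category (with $\mathrm{char}(\EE)=0$) is simple. For this it suffices to prove that the 2-colimit $\cA=\textup{2-colim}_i\cA_i$ is itself a Tannakian category over~$\EE$.

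First I would verify that $\cA$ inherits from the $\cA_i$ the structure of an $\EE$-linear rigid symmetric monoidal abelian category with $\mathrm{End}(\one)=\EE$ and biexact tensor product. The filtered 2-colimit in $\tCat$ can be computed underlyingly in $\mathrm{Cat}$ and then equipped with the induced tensor product; because filtered colimits commute with finite limits and finite colimits in $\mathrm{Cat}$, the exactness and $\EE$-linearity of the transition functors ensure that $\cA$ is an $\EE$-linear abelian category and that each insertion $\iota_i\colon\cA_i\to\cA$ is exact. Rigidity and the identity $\mathrm{End}(\one_\cA)=\EE$ persist because every object, every morphism, and hence every endomorphism of~$\one$ in~$\cA$ is represented at some finite stage.

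Next I would invoke Deligne's internal characterization of Tannakian categories (and here the characteristic zero hypothesis is crucial): an $\EE$-linear rigid symmetric monoidal abelian category with $\mathrm{End}(\one)=\EE$ is Tannakian if and only if every object is annihilated by some Schur functor (equivalently, every object has non-negative integer dimension). Given $X\in\cA$, choose a representative $X_i\in\cA_i$; because $\cA_i$ is Tannakian there is a partition $\lambda$ with $S_\lambda(X_i)=0$, and the exact symmetric monoidal insertion $\iota_i$ then forces $S_\lambda(X)\cong\iota_i(S_\lambda(X_i))=0$. Hence $\cA$ satisfies Deligne's criterion and is itself Tannakian over~$\EE$.

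Applying the simplicity theorem of \cite{gallauer:tannaka-dmet} to~$\cA$ then gives the conclusion. The main obstacle I expect is the first step, i.e.\ carefully identifying the 2-colimit in $\tCat$ with the filtered colimit of the underlying $\EE$-linear abelian categories in such a way that the insertion functors are exact; this is what allows Schur-functor vanishing to propagate from the $\cA_i$ to~$\cA$. Once this is settled, the Tannakian property of~$\cA$ drops out of Deligne's criterion essentially formally, and the simplicity of $\Db(\cA)$ follows at once. Note that characteristic zero enters twice: through Deligne's criterion (whose proof uses semisimplicity of symmetric group representations over~$\EE$) and through the input theorem from \cite{gallauer:tannaka-dmet}.
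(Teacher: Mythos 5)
Your proposal is correct and follows essentially the same route as the paper: form the filtered 2-colimit, observe it is an $\EE$-linear rigid tensor abelian category, apply Deligne's internal characterization of Tannakian categories in characteristic zero (the paper cites \cite[Th\'eor\`eme~7.1]{MR1106898}) to conclude $\cA$ is Tannakian, and then invoke \cite[Theorem~2.4]{gallauer:tannaka-dmet}. Your write-up merely makes explicit the Schur-functor propagation argument that the paper leaves implicit in its citation of Deligne.
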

\begin{proof}
The colimit $\cA$ is an $\EE$-linear, rigid tensor abelian category, and it follows from~\cite[Th\'eor\`eme~7.1]{MR1106898} that it is Tannakian too.
We proved in~\cite[Theorem~2.4]{gallauer:tannaka-dmet} that the bounded derived category of a Tannakian category in characteristic zero is simple.
\end{proof}

\section{Examples}
\label{sec:examples}

In this section we verify that the theories mentioned in the introduction satisfy the axioms of a generically simple constructible system.
We also discuss additional theories obtained by restriction to subcategories, as well as the theory of motivic sheaves which satisfies (some of) the axioms only conjecturally.

\subsection{Constructible analytic sheaves}
\label{sec:dbc-analytic}
Let $\kk\subseteq\CC$ be a field with a fixed embedding into the complex numbers and fix a characteristic zero field~$\EE$.
For a variety $X\in\Var$ we denote by $X(\CC)$ the set of $\CC$-points with the usual analytic topology, and by
$\Sh(X(\CC);\EE)$ the category of sheaves of $\EE$-vector spaces on this topological space.
Let $\LS(X(\CC);\EE)$ denote the full subcategory of $\EE$-local systems and $\Cons(X;\EE)$ the full subcategory of (algebraically) constructible sheaves.
Recall the latter are those sheaves~$F$ for which there exists a finite stratification $X=\amalg_iX_i$ into locally closed subsets (defined over~$k$) with $F|_{X_i(\CC)}$ a local system.
The reader can find more details for example in~\cite[\S\,4.1]{MR2050072} (for $\kk=\CC$ but this condition is unnecessary).
Both $\LS(X(\CC);\EE)$ and $\Cons(X;\EE)$ are Serre tensor subcategories of $\Sh(X(\CC);\EE)$ and hence restricting to complexes whose total cohomology belongs to them defines two sub-tt-categories
\[
\Dbls(X(\CC);\EE)\subseteq \Dbc(X;\EE)\subseteq\Der(\Sh(X(\CC);\EE))
\]
of the derived category of all sheaves.

\begin{prop}
\label{sta:Dbc-gscs}%
The pseudo-functor $\Dbc(-;\EE)$ is a constructible system on~$\Var$.
For $k$ algebraically closed it is moreover generically simple.
\end{prop}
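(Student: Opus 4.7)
The plan is to verify the axioms \bref{ax:loc} and \bref{ax:cons} of a constructible system, and then, under the assumption that $\kk$ is algebraically closed, the axiom \bref{ax:simple}. Axiom \bref{ax:loc} is the classical six-functor recollement for constructible sheaves, as in \cite[\S\,1.4]{MR751966}: the pairs $(j_!,j^*,j_*)$ and $(i^*,i_*,i^!)$ preserve constructibility and satisfy the defining recollement conditions. For \bref{ax:cons}, I take $\CSls$ to be $\Dbls$ as defined in the excerpt. Part~(1) follows from the definition of constructibility combined with generic smoothness of reduced $\kk$-schemes: a stratification of $X$ for which the cohomology sheaves of $M\in\Dbc(X;\EE)$ are locally constant admits a dense open stratum, which may be further shrunk to the regular locus. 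Part~(\ref{lisse:restriction}) is clear since the pullback of a local system along any continuous map is a local system. For part~(\ref{lisse:conservative}), on a regular connected $X$ the analytic space $X(\CC)$ is connected and locally contractible, so $\LS(X(\CC);\EE)$ is equivalent to $\Rep_\EE(\pi_1(X(\CC),x_0))$; the stalk functor at any point is faithful on this abelian category, and conservativity extends to $\Dbls$ via $t$-exactness of stalks and the usual long-exact-sequence argument.

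Now assume $\kk$ is algebraically closed and fix $x\in X\in\Var$. Replacing $X$ by $\cl(x)$ I may assume $X$ is integral with $x$ its generic point; by \Cref{rmk:C(x)-lisse}, $\Dbc(x;\EE)=\Dbls(x;\EE)$, and one may compute the latter as the 2-colimit of $\Dbls(U(\CC);\EE)$ over the cofinal system of smooth connected opens $U\subseteq X$. Because $\kk$ is algebraically closed and $X$ is integral, each such $U$ is geometrically connected, hence $U(\CC)$ is connected; consequently $\LS(U(\CC);\EE)$ is a neutral $\EE$-linear Tannakian category via the fiber functor at a base point.

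The key step is an identification
\[
\Dbls(U(\CC);\EE)\simeq \Db(\LS(U(\CC);\EE))
\]
as tt-categories for each such $U$; this is the analogue for local systems of Beilinson's identification of $\Dbc$ with the derived category of perverse sheaves \cite{beilinson87}, and boils down to matching extensions in $\Sh$ between local systems with those computed internally in $\LS$. Granting this and the compatibility of $\Db$ with filtered 2-colimits of abelian categories along exact functors, passage to the 2-colimit yields
\[
\Dbls(x;\EE)\simeq \Db\bigl(\textup{2-colim}_U \LS(U(\CC);\EE)\bigr),
\]
a filtered 2-colimit of $\EE$-linear Tannakian categories along $\EE$-linear exact pullback functors. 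Applying \Cref{sta:cs-gen-simple-criterion} then gives simplicity of $\Dbls(x;\EE)=\Dbc(x;\EE)$, completing the verification.

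I expect the main obstacle to be the identification $\Dbls(U(\CC);\EE)\simeq \Db(\LS(U(\CC);\EE))$ on smooth connected varieties, where Beilinson's technique plays its essential role. A secondary subtlety is the compatibility of $\Db$ with filtered 2-colimits in $\ttCat$, which should follow formally from exactness of the transition functors.
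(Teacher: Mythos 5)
Your verification of \bref{ax:loc} and of the first two conditions of \bref{ax:cons} matches the paper. The main problem is your key step for \bref{ax:simple}: the claimed equivalence $\Db(\LS(U(\CC);\EE))\simeq\Dbls(U(\CC);\EE)$ for \emph{every} smooth connected open $U$ is not known and is most likely false in general. The paper only establishes it for \emph{some} dense open $U$ in each $X$ (\Cref{sta:dbls-generic}, whose proof requires shrinking $U$ to uniformize an elementary fibration), and the remark following it explains exactly why this is delicate: even for a $K(\pi,1)$-space one only knows that extensions of local systems agree in $\Sh(U(\CC);\EE)$ and in all $\pi_1(U(\CC))$-modules, whereas your identification needs them computed in \emph{finite-dimensional} representations. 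The paper's actual route avoids this entirely by invoking Nori's theorem, which gives $\Db(\Cons(U;\EE))\isoto\Dbc(U;\EE)$ for every $U$ at the level of all constructible sheaves; passing to the 2-colimit and using that $\Cons(x;\EE)=\LS(x;\EE)$ at the generic point then lands you in the situation of \Cref{sta:cs-gen-simple-criterion}. (Alternatively, Beilinson's Lemma~2.1.1 yields $\Db(\LS(x;\EE))\isoto\Dbls(x;\EE)$ directly at the generic point; one could also salvage your colimit argument by restricting to the cofinal system of opens supplied by \Cref{sta:dbls-generic}, but not by asserting the equivalence for all $U$.)

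A secondary gap is in condition (3) of \bref{ax:cons}. The first assertion of the proposition is for an arbitrary embedding $\kk\subseteq\CC$, and for such $\kk$ a regular connected $\kk$-variety $X$ need not have $X(\CC)$ connected (already $X=\Spec(\kk[t]/(t^2+1))$ for $\kk=\QQ$ gives two analytic points), so your appeal to $\LS(X(\CC);\EE)\simeq\Rep(\pi_1(X(\CC),x_0);\EE)$ and faithfulness of a single stalk functor does not apply. Moreover the conservativity to be proved concerns restriction to a locally closed $\kk$-subvariety $V\ni x$, not a stalk at an analytic point; what one actually needs, and what the paper proves via $\pi_0(X_\CC)\cong\pi_0(X(\CC))$ and the fact that each connected component of $X_\CC$ surjects onto $X$, is that $V(\CC)$ meets every connected component of $X(\CC)$, after which constancy of the rank of a local system on each component finishes the argument.
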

\begin{proof}
The assignment $X\mapsto\Dbc(X;\EE)$ underlies a six-functor formalism, of which \bref{ax:loc} is only one property.
(If $\kk=\CC$, the necessary constructions and properties can be found for example in~\cite{MR2050072} and references therein, particulary in \cite[\S\,4.1]{MR2050072}.
For general $\kk$ see~\cite{ayoub-anabel}.)
With $\Dbls$ as the lisse objects, the first two conditions of \bref{ax:cons} are straightforward.
For the last condition, that is, to prove that $\rho_x|_{\CSls(X)}$ is conservative when $X$ is (regular and) connected and $x\in X$, we reduce to the following claim:
If $F\in\LS(X(\CC);\EE)$ vanishes on $V(\CC)$ for some locally closed subset $x\in V\subseteq X$ then $F=0$.
As a local system has constant rank on a connected topological space it suffices to prove that $V(\CC)$ meets all connected components of $X(\CC)$.
Denote by $X_\CC$ (resp.\ $V_\CC$) the base change of $X$ (resp.\ $V$) to~$\CC$.
The canonical map $\pi_0(X_\CC)\to\pi_0(X(\CC))$ is a bijection (and similarly for~$V$) so we reduce to proving that $V_\CC$ meets all connected components of $X_\CC$.
Let $T\subseteq X_\CC$ be such a connected component.
By~\cite[\href{https://stacks.math.columbia.edu/tag/04PZ}{Tag 04PZ}]{stacks-project}, the image of $T$ in~$X$ is a connected component hence must equal~$X$.
In particular, $T$ and $V_\CC$ intersect.

For \bref{ax:simple} let $X$ be an irreducible variety with generic point~$x$.
For each open $x\in U\subseteq X$, the canonical tt-functor
\[
\Db(\Cons(U;\EE))\to\Dbc(U;\EE)
\]
is an equivalence, as Nori proves~\cite[Theorem~3(b)]{MR1940678}.
Passing to the 2-colimit yields a tt-equivalence $\Dbc(x;\EE)\simeq \Db(\Cons(x;\EE))$ where $\Cons(x;\EE)$ is defined as the analogous 2-colimit of the $\Cons(U;\EE)$.
(See~\cite[Lemma~2.6]{gallauer:tannaka-dmet} for commuting $\Db$ and filtered colimits if necessary.)
By definition of constructible sheaves, we also have $\Cons(x;\EE)=\LS(x;\EE)$.
But the category of local systems $\LS(U(\CC);\EE)$ is $\EE$-linear neutral Tannakian if $\kk$ is algebraically closed.
(If $\kk$ is not algebraically closed $U(\CC)$ can have multiple connected components.)
We conclude with \Cref{sta:cs-gen-simple-criterion} that $\Dbc(x;\EE)$ is simple.
\end{proof}

\begin{rmk}
\label{rmk:LSx}%
For each $U$ as in the proof, a fiber functor for $\LS(U(\CC);\EE)$ is given by the stalk at any point $u$ of $U(\CC)$, in which case we obtain an identification with the category $\Rep(\pi_1(U(\CC),u);\EE)$ of $\EE$-linear representations of the fundamental group.
We argued in the proof of \Cref{sta:cs-gen-simple-criterion} that the filtered colimit $\LS(x;\EE)$ of these Tannakian categories remains Tannakian (although not necessarily neutral) using Deligne's characterization of Tannakian categories in characteristic zero.
Alternatively, a fiber functor~$\omega$ can be described as follows.

Assume $X$ is integral and choose an ultrafilter $\cU$ on $X(\CC)$ which contains $U(\CC)$ for all $U$ appearing in the colimit.
(This exists because every $U$ is dense.)
Let $\EE(x)=\EE^{X(\CC)}/\cU$ be the associated ultrapower, that is, the quotient of the ring $\EE^{X(\CC)}$ by the equivalence relation whereby two families become equivalent iff they agree on an element of $\cU$.
Then $\EE(x)$ is a field extension of $\EE$.
Given a local system $M\in\LS(U(\CC);\EE)$ define $\omega(M)$ to be the equivalence class of the family $(M_u)$ where $M_u$ is the stalk at $u$ if $u\in U(\CC)$ and $0$ else.
\textsl{Apriori} this functor takes values in the ultrapower $\modf(\EE)^{X(\CC)}/\cU$ but clearly it actually lands in the abelian tensor subcategory of objects with constant finite dimension almost everywhere.
One may identify this subcategory with $\modf(\EE(x))$, the category of finite dimensional $\EE(x)$-vector spaces.
This defines a fiber functor $\omega:\LS(x;\EE)\to\modf(\EE(x))$.
\end{rmk}

In fact, the constructible system $\Dbc(-;\EE)$ is generically simple in an even stronger sense.
The following result will not be used in the sequel but seems interesting in its own right.
It is implicit in Beilinson's proof of~\cite[Lemma~2.1.1]{beilinson87}.
\begin{prop}
\label{sta:dbls-generic}%
Let $X\in\Var[\CC]$ and $\EE$ a characteristic zero field.
There exists an open dense $U\subseteq X$ such that the canonical tt-functor
\[
\Db(\LS(U(\CC);\EE))\xto{\sim}\Dbls(U(\CC);\EE)
\]
is an equivalence.
\end{prop}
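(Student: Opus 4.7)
The plan is to reduce the statement to the classical identification of sheaf cohomology of local systems with group cohomology of the corresponding $\pi_1$-representations, after shrinking to a $K(\pi,1)$ open. First, by generic smoothness (we are in characteristic zero), we may replace $X$ by its smooth locus and assume $X$ smooth. Next, invoke Artin's theorem on the existence of good neighborhoods (SGA\,4, Exp.\,XI): there exists a dense open $U\subseteq X$ admitting an iterated elementary fibration structure, and in particular $U(\CC)$ is a $K(\pi,1)$ with $\pi:=\pi_1(U(\CC),u)$ for a chosen basepoint $u\in U(\CC)$.

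On such a $U$, for any $F,G\in\LS(U(\CC);\EE)$ the internal hom $\mathcal{H}\mathrm{om}(F,G)$ is again a local system, corresponding under the equivalence $\LS(U(\CC);\EE)\simeq\Rep(\pi;\EE)$ to $\mathrm{Hom}(F_u,G_u)$. Since $U(\CC)$ is a $K(\pi,1)$, sheaf cohomology of local systems computes group cohomology of the associated $\pi$-representation. Hence for all $i\geq0$ and all $F,G\in\LS(U(\CC);\EE)$ we obtain natural isomorphisms
\[
\mathrm{Ext}^i_{\Sh(U(\CC);\EE)}(F,G)\cong H^i(U(\CC);\mathcal{H}\mathrm{om}(F,G))\cong H^i(\pi;\mathrm{Hom}(F_u,G_u))\cong\mathrm{Ext}^i_{\LS(U(\CC);\EE)}(F,G).
\]

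This Ext-matching implies that the canonical triangulated functor $\Db(\LS(U(\CC);\EE))\to\Db(\Sh(U(\CC);\EE))$ is fully faithful. Indeed, morphisms and extension classes between bounded complexes are built from Exts between their cohomology sheaves via iterated use of truncation triangles, and the above isomorphism preserves them at every stage. The essential image is clearly contained in $\Dbls(U(\CC);\EE)$. Conversely, any object $M\in\Dbls(U(\CC);\EE)$ is built from its cohomology sheaves (which lie in $\LS(U(\CC);\EE)$) by iterated extensions against connecting maps: full faithfulness allows each such connecting map, and hence the cone, to be lifted to $\Db(\LS(U(\CC);\EE))$. Induction on cohomological amplitude shows $M$ lies in the essential image, so the functor is an equivalence.

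The main obstacle is the geometric input: one must produce a dense open whose analytification is a $K(\pi,1)$, which is exactly what Artin's good neighborhood theorem supplies. With that in hand, everything else is a standard exercise in derived-category devissage combined with the classical agreement of sheaf cohomology and group cohomology for local systems on an Eilenberg--MacLane space.
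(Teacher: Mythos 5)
There is a genuine gap, and it is exactly the one the paper flags in the remark immediately following this proposition. The last isomorphism in your chain,
\[
H^i(\pi;\mathrm{Hom}(F_u,G_u))\;\cong\;\mathrm{Ext}^i_{\LS(U(\CC);\EE)}(F,G),
\]
is unjustified. Group cohomology $H^i(\pi;-)$ computes Yoneda Ext in the category of \emph{all} $\EE[\pi]$-modules, whereas $\LS(U(\CC);\EE)\simeq\Rep(\pi;\EE)$ is the category of \emph{finite-dimensional} representations. The comparison map $\mathrm{Ext}^i_{\Rep(\pi;\EE)}(F_u,G_u)\to H^i(\pi;\mathrm{Hom}(F_u,G_u))$ is an isomorphism for $i\le 1$ (an extension of finite-dimensional modules is finite-dimensional), but for $i\ge 2$ a Yoneda class on the right is represented by an exact sequence with possibly infinite-dimensional middle terms, and there is no a priori reason it can be represented by, or two such finite-dimensional representatives compared through, finite-dimensional ones. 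So the $K(\pi,1)$ property only identifies Ext in sheaves on $U(\CC)$ with Ext in $\pi_1(U(\CC))$-modules; what the proposition requires is the identification with Ext in \emph{local systems}. The paper explicitly states that it does not know whether every $K(\pi,1)$-space satisfies the conclusion, so Artin neighborhoods alone cannot close this gap.

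The paper's actual proof sidesteps the issue by running Beilinson's effaceability argument from~\cite[Lemma~2.1.1]{beilinson87}: an induction on $\dim X$ in which one produces a smooth affine morphism $\pi:U\to Z$ with one-dimensional fibers such that the $\dR^q\pi_*$ of the relevant local systems are again local systems on $Z$, and uses the Leray spectral sequence to efface Ext classes within the finite-dimensional world. The only new ingredient is~\cite[Corollaire~5.1]{MR481096}, which allows one to shrink the base of an elementary fibration once and for all so that $\pi$ is an analytically locally trivial fibration and hence works simultaneously for \emph{every} local system $F$, yielding a single $U$ rather than one per object. Your devissage deducing an equivalence of derived categories from a matching of Ext groups is standard and unobjectionable; the entire difficulty lies in establishing that matching.
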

\begin{proof}
We may clearly assume $X$ is integral.
In~\cite[Lemma~2.1.1]{beilinson87} it is proved (as a special case) that for the generic point $x\in X$, the canonical functor $\Db(\LS(x;\EE))\to\Dbls(x;\EE)$ is an equivalence.
The proof proceeds by induction on the dimension of~$X$.
In the induction step one starts with a local system $F$ (in Beilinson's notation, $F=M^*\otimes N$) on $X$ and produces a smooth affine morphism $\pi:U\to Z$ with 1-dimensional fibers, where $U\subseteq X$ is a non-empty open subset, and $Z$ is regular, such that the higher direct images $\dR^q\pi_*(F|_U)$ are local systems on~$Z$.

To obtain the statement in the proposition it is sufficient to choose $\pi$ independently of~$F$.
This is indeed possible, by~\cite[Corollaire~5.1]{MR481096}.
For example, starting with an elementary fibration $\pi':U'\to Z'$ in the sense of Artin~\cite[\S\,XI]{MR0354654},
this result guarantees a (regular) open dense $Z\subseteq Z'$ such that the restriction $\pi=\pi'|_U:U=\pi^{-1}(Z)\to Z$ is a locally trivial fibration in the analytic topology.
In particular, for \emph{every} $F$ as above, the higher direct images $\dR^q\pi_*(F|_U)$ are local systems on~$Z$.
The rest of the proof of~\cite[Lemma~2.1.1]{beilinson87} can be copied verbatim.
\end{proof}

\begin{rmk}
\begin{enumerate}[(a)]
\item We do not know whether \Cref{sta:dbls-generic} holds for $\ell$-adic lisse sheaves (discussed below in \Cref{sec:dbc-adic}).
\item The statement of \Cref{sta:dbls-generic} as well as the invocation of elementary fibrations in the proof suggest a relation with $K(\pi,1)$-spaces (`Artin neighborhoods'~\cite[\S\,XI]{MR0354654}).
However, we do not know whether every $K(\pi,1)$-space~$U$ satisfies the conclusion.
Indeed, saying that $U$ is a $K(\pi,1)$-space amounts to the statement that extensions between local systems may be computed equivalently in the category of sheaves on $U(\CC)$ or in the category of $\pi_1(U(\CC))$-modules.
However, we need to know that they may be computed in the category of local systems on $U(\CC)$ (that is, in the category of \emph{finite dimensional} $\pi_1(U(\CC))$-modules).
\end{enumerate}
\end{rmk}

For $X\in\Var[\CC]$ denote by $\Perv(X;\EE)\subseteq\Dbc(X;\EE)$ the abelian category of perverse sheaves on~$X$, and by
$\Pervgm(X;\EE)$ the full subcategory of $\Perv(X;\EE)$ spanned by objects whose simple subquotients are all of geometric origin~\cite[\S\,6.2.4]{MR751966}.
By construction, this is a weak Serre tensor subcategory.\footnote{A weak Serre subcategory of an abelian category $\cC$ is a non-empty full subcategory $\cC'$ such that an exact sequence $A_1\to A_2\to B\to A_3\to A_4$ in $\cC$ with $A_i\in\cC'$ implies $B\in\cC'$.}
Let $\Dbgm(X;\EE)$ be the subcategory of $\Dbc(X;\EE)$ spanned by objects whose perverse cohomologies belong to $\Pervgm(X;\EE)$.
These are the \emph{constructible analytic sheaves of geometric origin}.
In addition to being a tt-subcategory, it is stable under the usual functoriality in~$X$, by construction.
\begin{prop}
\label{sta:Dbgm-gscs}%
The pseudo-functor $\Dbgm(-;\EE)$ is a generically simple constructible system on~$\Var[\CC]$.
\end{prop}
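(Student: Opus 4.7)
The plan is to deduce both assertions (constructible system and generic simplicity) at once from \Cref{sta:cs-sub} applied to the inclusion $\Dbgm(-;\EE)\subseteq\Dbc(-;\EE)$. By \Cref{sta:Dbc-gscs}, and since $\CC$ is algebraically closed, the ambient pseudo-functor $\Dbc(-;\EE)$ is already a generically simple constructible system on~$\Var[\CC]$, so everything reduces to checking the hypotheses of that corollary.

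For the stability hypothesis I need to see that $\Dbgm(-;\EE)$ is a sub-pseudo-functor of $\Dbc(-;\EE)$ which is stable under $f_*$ for every immersion $f$. Stability under $f^*$ (making it a sub-pseudo-functor) is already recorded in the paragraph preceding the statement. For stability under $f_*$, I would invoke the fact, built into BBD's definition \cite[\S\,6.2.4]{MR751966}, that the class of complexes of geometric origin is closed under each of the six operations; hence for $M\in\Dbgm(V;\EE)$ and an immersion $f:V\into X$, every perverse cohomology of $f_*M$ has simple subquotients of geometric origin and so lies in $\Pervgm(X;\EE)$, giving $f_*M\in\Dbgm(X;\EE)$. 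This verifies part~(a) of \Cref{sta:cs-sub}, yielding that $\Dbgm(-;\EE)$ is a constructible system.

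For part~(b) of \Cref{sta:cs-sub} I would check that $\Dbgm(x;\EE)$ is rigid for each point $x\in X\in\Var[\CC]$. The proof of \Cref{sta:Dbc-gscs} identifies $\Dbc(x;\EE)$ with $\Db(\LS(x;\EE))$, where $\LS(x;\EE)$ is a filtered $2$-colimit of $\EE$-linear Tannakian categories and so is itself a rigid $\EE$-linear abelian tensor category; its bounded derived category is therefore rigid. Since Verdier duality preserves geometric origin, the full tt-subcategory $\Dbgm(x;\EE)\subseteq\Dbc(x;\EE)$ is stable under the inherited duality, and hence is itself rigid. With both inputs in hand, \Cref{sta:cs-sub}(b) then gives generic simplicity.

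The main obstacle is really just locating and citing the correct stability statement for ``geometric origin'' under the six operations in BBD; once that is taken as a black box, the proposition becomes a formal application of \Cref{sta:cs-sub}.
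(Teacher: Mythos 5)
Your proposal is correct and follows exactly the paper's route: the paper likewise deduces the proposition from \Cref{sta:cs-sub} together with \Cref{sta:Dbc-gscs}, taking the stability of geometric origin under the relevant functors (and the rigidity needed for part~(b)) as given by the construction in \cite[\S\,6.2.4]{MR751966}. You have merely spelled out the details the paper leaves implicit.
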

\begin{proof}
This follows from \Cref{sta:cs-sub} and \Cref{sta:Dbc-gscs}.
\end{proof}

\begin{rmk}
\label{rmk:quasi-unipotent}%
For an alternative definition of constructible sheaves of geometric origin and a proof that they are stable under the required functoriality see~\cite{ayoub-anabel} which also treats the general case $\kk\subseteq\CC$.

Between the two generically simple constructible systems $\Dbgm\subseteq\Dbc$, another example should be generated by quasi-unipotent constructible sheaves~\cite{MR656052}.
\end{rmk}

\subsection{Constructible \'etale sheaves}
\label{sec:dbc-adic}

Let $\kk$ be a field and $\ell$ a prime number invertible in~$\kk$.
Fix a finite extension $\EE$ of $\QQ_\ell$, or $\EE=\bar{\QQ}_\ell$. 
We will not recall the definition of the `derived' tt-category~$\Dbc(X;\EE)$ of constructible adic sheaves in the \'etale topology on varieties $X/k$.
This is sketched in~\cite[\S\,1.1]{MR601520} for certain $\kk$, and treated carefully (and generalized) in~\cite{MR1106899}.
Equivalently, one may use the pro-\'etale topology to define these categories~\cite{MR3379634}.
There is a bounded t-structure whose heart is the category $\Cons(X;\EE)$ of constructible $\EE$-sheaves on~$X$ in the sense of~\cite[Expos\'e~VI, 1.4.3]{sga5}.
Let $\LS(X;\EE)\subseteq\Cons(X;\EE)$ denote the full subcategory of lisse $\EE$-sheaves.
As in the analytic context this gives rise to a tt-subcategory
$\Dbls(X_\et;\EE)\subseteq\Dbc(X;\EE)$ and we claim:
\begin{prop}
\label{sta:Dbcadic-gscs}%
The pseudo-functor $\Dbc(-;\EE)$ is a generically simple constructible system on~$\Var$.
\end{prop}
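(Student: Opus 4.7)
The plan is to closely parallel the proof of \Cref{sta:Dbc-gscs}, substituting étale-adic constructions for analytic ones.

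The recollement axiom \bref{ax:loc} is a standard feature of the six-functor formalism for $\Dbc(-;\EE)$ developed in~\cite{MR1106899} (or equivalently in the pro-\'etale framework of~\cite{MR3379634}). Taking the tt-subcategories $\Dbls(X_\et;\EE)\subseteq\Dbc(X;\EE)$ as the lisse objects, conditions~(1) and~(2) of~\bref{ax:cons} are immediate from the definition of constructibility (every constructible $\EE$-sheaf is lisse on a dense open) and the evident compatibility of $f^*$ with lisseness. For condition~(3), one reduces as in the analytic case to showing that a lisse $\EE$-sheaf $F$ on a regular connected~$X$ which vanishes on a non-empty locally closed subscheme must itself vanish; since lisse $\EE$-sheaves correspond to continuous finite-dimensional $\EE$-representations of $\pi_1(X_\et;\bar x)$, they have constant rank on the connected scheme~$X$, so the conclusion is immediate.

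For \bref{ax:simple}, let $x$ be a point of $X\in\Var$; replacing $X$ by a dense open in the reduced closure of~$x$, we may assume $X$ integral with generic point~$x$. Since every constructible $\EE$-sheaf is lisse on a dense open, $\Cons(x;\EE)=\LS(x;\EE)$ and consequently $\Dbc(x;\EE)=\Dbls(x;\EE)$ as 2-colimits. The étale adic version of Beilinson's~\cite[Lemma~2.1.1]{beilinson87} provides, for every sufficiently small dense open $U\subseteq X$, an equivalence $\Db(\LS(U_\et;\EE))\xto{\sim}\Dbls(U_\et;\EE)$; passing to the 2-colimit yields $\Dbc(x;\EE)\simeq \Db(\LS(x;\EE))$. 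Finally, $\LS(x;\EE)$ is a filtered colimit of the $\EE$-linear neutral Tannakian categories $\LS(U_\et;\EE)\simeq\Rep(\pi_1(U_\et;\bar x);\EE)$ (integrality of~$U$ ensures $\mathrm{End}(\one)=\EE$, and the fiber at a geometric point furnishes a fiber functor), so \Cref{sta:cs-gen-simple-criterion} shows that $\Dbc(x;\EE)$ is simple.

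The main point requiring attention is that Beilinson's inductive argument carries over in the full generality claimed: over an arbitrary field~$\kk$ (with $\ell$ invertible in~$\kk$) and for coefficients $\EE=\bar\QQ_\ell$ as well as finite extensions of~$\QQ_\ell$. The induction on dimension in~\loccit~is essentially formal once one has the étale analogue of the elementary-fibration setup, and the $\bar\QQ_\ell$ case can be handled by observing that individual objects and morphisms descend to some finite subextension. In contrast to the analytic case, no algebraic closedness assumption on~$\kk$ is needed: the étale fundamental group already incorporates the Galois action of~$\kk$, so the representation categories remain neutral Tannakian over~$\EE$ whenever $X$ is integral.
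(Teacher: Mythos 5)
Your treatment of \bref{ax:loc} and of conditions (1)--(3) in \bref{ax:cons} matches the paper's (for (3), your constant-rank argument and the paper's appeal to faithfulness of the fiber functor $(-)_{\bar x}$ are the same point). The genuine problem is in your argument for \bref{ax:simple}: you invoke, for every sufficiently small dense open $U\subseteq X$, an equivalence $\Db(\LS(U_\et;\EE))\xto{\sim}\Dbls(U_\et;\EE)$, i.e.\ the \'etale-adic analogue of \Cref{sta:dbls-generic}. The paper explicitly records that it does \emph{not} know whether this finite-level statement holds for $\ell$-adic lisse sheaves. The obstruction is that the analytic proof of \Cref{sta:dbls-generic} requires choosing the elementary fibration $\pi$ \emph{uniformly}, independently of the local system $F$, which is achieved via local triviality in the analytic topology (\cite[Corollaire~5.1]{MR481096}); no such uniform choice is available \'etale-locally, and Beilinson's induction as written only produces, for each individual object, its own dense open on which the relevant higher direct images are lisse. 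So the step you call ``essentially formal'' is in fact an open problem.

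Fortunately you only use these finite-level equivalences to deduce the colimit-level equivalence $\Dbc(x;\EE)\simeq\Db(\LS(x;\EE))$, and that weaker statement is exactly what \cite[Lemma~2.1.1]{beilinson87} delivers when applied directly to the category at the generic point: there the open set is allowed to shrink with the object, so the non-uniformity is harmless. This is how the paper proceeds (noting additionally that Beilinson's argument, phrased for the perverse t-structure, goes through for the standard t-structure and over non-closed~$\kk$). With that substitution the remainder of your proof --- $\Cons(x;\EE)=\LS(x;\EE)$ is a filtered colimit of neutral $\EE$-linear Tannakian categories $\LS(V_\et;\EE)$, whence simplicity by \Cref{sta:cs-gen-simple-criterion} --- is correct and agrees with the paper, as do your remarks on descending $\bar\QQ_\ell$-objects to finite subextensions and on neutrality over non-closed~$\kk$.
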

\begin{proof}
The proof that $\Dbc(-;\EE)$ is a constructible system is entirely analogous to \Cref{sta:Dbc-gscs}.
We start by noting that $\Dbc(-;\EE)$ admits a six-functor formalism and therefore satisfies \bref{ax:loc}, see \cite[Theorem~6.3.(iv)]{MR1106899} for a precise statement.
With $\Dbls$ as the lisse objects the first condition in \bref{ax:cons} holds by construction.
The second condition holds (for any morphism~$f$, not necessarily an immersion) because $f^*$ preserves constant sheaves.
For the last condition, using the t-structure we reduce to showing that the functor
\[
\LS(X_\et;\EE)\to\LS(x;\EE):=\textup{2-colim}\LS(V_{\et};\EE)
\]
is conservative when $X$ is connected, and where $V\subseteq X$ runs through the locally closed subsets containing~$x$.
But if $M\in\LS(X_\et;\EE)$ vanishes on a locally closed subset~$V$ and $\bar{x}$ is a geometric point of~$X$ with image~$x$ then $M_{\bar{x}}=0$ which implies $M=0$.
Indeed, $(-)_{\bar{x}}$ is a fiber functor which exhibits $\LS(X_\et;\EE)$ as a neutral Tannakian category~\cite[Expos\'e~VI, \S\,1.4.3]{sga5}.

We now turn to the last axiom \bref{ax:simple}.
If $\kk$ is algebraically closed, one can prove it literally as in \Cref{sta:Dbc-gscs}, using~\cite{barrett:dbc} instead of~\cite{MR1940678}.
In the general case, let $x\in X\in\V$.
By the argument of~\cite[Lemma~2.1.1]{beilinson87}, the canonical tt-functor $\Db(\Cons(x;\EE))\to\Dbc(x;\EE)$ is an equivalence.
Indeed, while Beilinson works with the \emph{perverse} t-structure in \loccit, the argument clearly goes through with respect to the standard t-structure as well, and for non-closed fields~$\kk$.
But $\Cons(x;\EE)=\LS(x;\EE)$ and this is a filtered colimit of neutral $\EE$-linear Tannakian categories $\LS(V_\et;\EE)$, as already explained in the first paragraph of the proof.
The axiom \bref{ax:simple} therefore follows from \Cref{sta:cs-gen-simple-criterion}.
\end{proof}

\begin{rmk}
The category $\LS(x;\EE):=\textup{2-colim}\LS(V_\et;\EE)$ admits a fiber functor analogously to the analytic case in \Cref{rmk:LSx}.
\end{rmk}

\begin{rmk}
\label{rmk:mixed-sheaves}%
The subsystem generated by mixed sheaves~\cite[\S\,6]{MR601520} should form another example of a generically simple constructible system.
\end{rmk}

\subsection{Holonomic \texorpdfstring{$\mathcal{D}$}{D}-modules}

Let $k$ be a field of characteristic zero.
For a \emph{regular} $k$-variety~$X$ we denote by $\Dbh(\cD_X)$ the bounded derived category of holonomic $\cD_X$-modules~\cite[\S\,VI.1.13]{bgkhme:d-modules}.
It comes with a canonical conservative exact functor $\nu:\Dbh(\cD_X)\to\Dbqc(\cO_X)$.
Since the notation in the literature is somewhat inconsistent let us stress that the tensor product and inverse image functor along $f:X\to Y$ `induced' by the tensor product and inverse image of $\cO_X$-modules will here be denoted by $\otimes^!$ and $f^!$, respectively.\footnote{In~\cite[\S\,VI]{bgkhme:d-modules}, these are denoted by $\otimes_{\cO_X}^{\dL}$ and $f^!$, respectively.}
More precisely, we have for $M,N\in\Dbh(\cD_Y)$ the relations $\nu(M\otimes^!N)\cong \nu(M)\otimes^{\dL}\nu(N)[-\dim(Y)]$ and $\nu f^!(M)\cong \dL f^*(\nu(M))[\dim(X)-\dim(Y)]$.
We denote the Verdier dual tensor product and inverse image by $\otimes$ and $f^*$, respectively.\footnote{In~\cite[\S\,VI]{bgkhme:d-modules}, the latter is denoted by $f^+$.}
It is the latter two which we will use in defining the constructible system.
However, since the two tensor products are anti-equivalent, the conclusion in \Cref{sta:main} doesn't depend on this choice.
So to summarize, we have the usual adjunctions and relations, familiar from six-functor formalisms:
\begin{align*}
f_!\dashv f^!,&& f^*\dashv f_*,&& \mathbb{D}f^!\mathbb{D}=f^*,&&\mathbb{D}f_!\mathbb{D}=f_*,
\end{align*}
where $f^*$ is the tensor functor with respect to~$\otimes$.
\begin{prop}
\label{sta:dbh-gscs}%
Assume $X$ may be embedded in a regular variety.
The assignment $V\mapsto\Dbh(\cD_V)$ for regular $V\subseteq X$ extends to a generically simple constructible system on~$X$.\footnote{If the reader accepts that $\Dbh(\cD_{-})$ underlies a well-behaved six-functor formalism (e.g.\ satisfying the axioms of~\cite[D\'efinitions~1.4.1, 2.3.1, 2.3.50]{ayoub07-thesis} restricted to regular varieties) then there is a standard way of extending it to a well-behaved formalism on $\Var$.
In particular, the statement of the proposition should hold with $\Var$ instead of $\Sub$.
For lack of reference we proceed instead in an \textsl{ad-hoc} fashion.}
\end{prop}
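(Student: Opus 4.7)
The plan is to extend $V\mapsto\Dbh(\cD_V)$ from regular to arbitrary $V\in\Sub$ by means of an ambient regular variety, and then to reduce the three axioms to the standard six-functor formalism on regular varieties together with \Cref{sta:cs-gen-simple-criterion}. First, fix an embedding $X\hookrightarrow\tilde X$ with $\tilde X$ regular (possible by hypothesis). For $V\in\Sub$, choose a regular open $\tilde U\subseteq\tilde X$ in which $V$ is closed, and set $\Dbh(\cD_V)$ to be the full tt-subcategory of $\Dbh(\cD_{\tilde U})$ consisting of objects whose cohomologies are supported on~$V$ (equipped with the tensor product of the ambient, which descends by the projection formula). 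By Kashiwara's equivalence together with invariance under further shrinking of~$\tilde U$, this is independent of all choices up to canonical tt-equivalence, and pull-back along an immersion $V\into W$ in $\Sub$, induced by pull-back at the level of ambient regular opens, assembles everything into a pseudo-functor $\Sub\op\to\ttCat$.

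For axiom \bref{ax:loc}, given an open-closed decomposition $U\xinto{j}V\xfrom{i}Z$, fix a regular open $\tilde U\subseteq\tilde X$ in which $V$ is closed; then $Z$ is also closed in $\tilde U$, and the standard six-functor formalism on $\Dbh(\cD_{\tilde U})$ yields a recollement over the stratification $(\tilde U\setminus Z,Z)$. Restricting all three slots to objects supported on~$V$ yields the required recollement for $(U,Z)$ in $V$. For axiom \bref{ax:cons}, take $\CSls(V)\subseteq\Dbh(\cD_V)$ for regular~$V$ to be the full subcategory of complexes whose cohomologies are integrable connections. Condition~(1) holds because the singular support of a holonomic $\cD$-module is conic Lagrangian and equals the zero section over some dense open, so after passing to a suitable regular dense open of any $V\in\Sub$ all cohomology sheaves become integrable connections. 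Condition~(2) is standard. Condition~(3) follows from the fact that a nonzero integrable connection on a regular connected variety is locally free of constant positive rank, hence nonvanishing on every nonempty locally closed subset.

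For axiom \bref{ax:simple}, the idea is to identify $\CSls(x)$ with the bounded derived category of a filtered colimit of $\kk$-linear Tannakian categories and then invoke \Cref{sta:cs-gen-simple-criterion}. Concretely, for a regular connected~$U$ the category $\mathrm{Conn}(U)$ of integrable connections on~$U$ is $\kk$-linear rigid tensor abelian and Tannakian (neutralised by the fiber at a $\kk$-point when $\kk$ is algebraically closed, and Tannakian in general by Deligne's characterisation in characteristic zero). The main obstacle is the identification $\CSls(U)\simeq\Db(\mathrm{Conn}(U))$ for regular~$U$, which is the $\cD$-module analog of Nori's theorem invoked in \Cref{sta:Dbc-gscs}. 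I would prove it by running the Beilinson-style induction of \Cref{sta:dbls-generic}, using that $\mathrm{Ext}^i$ groups of integrable connections are computed in both ambient categories by algebraic de Rham cohomology of the internal Hom; passing to the 2-colimit over shrinking regular neighborhoods of~$x$ then gives $\CSls(x)\simeq\Db(\mathrm{Conn}(x))$, whose simplicity follows from \Cref{sta:cs-gen-simple-criterion}.
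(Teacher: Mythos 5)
Your construction of the extension to non-regular $V$ (objects of an ambient regular open supported on $V$), your verification of \bref{ax:loc} via the six-functor formalism on the ambient variety, and your choice of lisse objects for \bref{ax:cons} all match the paper's argument in substance; the paper merely spends more effort on the routine but nontrivial points you wave at with ``assembles everything into a pseudo-functor'' (well-definedness, exactness and pseudo-functoriality of the restriction functors $|_V$, and the fact that the unit of $\CS(V)$ must be $\one|_V$ rather than the ambient unit). For \bref{ax:simple} your overall strategy---Beilinson's d\'evissage to identify the generic derived category with $\Db$ of integrable connections, then \Cref{sta:cs-gen-simple-criterion}---is also the paper's. Note that \cite[Lemma~2.1.1]{beilinson87} already covers holonomic $\cD$-modules, so one can cite it at the generic point rather than re-running the induction; moreover, your claim that higher Ext groups \emph{in the abelian category of integrable connections} are computed by de Rham cohomology is precisely what that induction is designed to establish, so as stated it is circular rather than an input.

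The genuine gap is at the very last step. \Cref{sta:cs-gen-simple-criterion} gives simplicity of $\Db(\iC(x))$ for the tensor product of connections, i.e.\ $\otimes_{\cO}$. But the tensor product of the constructible system on $\Dbh(\cD_{-})$ is the Verdier-dual product $\otimes$ (the one for which $f^*$ is monoidal), and Beilinson's equivalence $F:\Db(\iC(x))\to\Dbh(\cD_x)$ is \emph{not} a tensor functor for this structure: it intertwines $\otimes_{\cO}$ with $\otimes^!$ only up to a shift, and $\otimes$ is related to $\otimes^!$ through $\mathbb{D}$. So ``whose simplicity follows from \Cref{sta:cs-gen-simple-criterion}'' does not follow: a non-trivial tt-ideal of $(\Dbh(\cD_x),\otimes)$ need not a priori pull back to a tt-ideal of $\Db(\iC(x))$ under a non-monoidal equivalence. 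The paper closes this by first applying $\mathbb{D}$ to convert a putative non-trivial tt-ideal for $\otimes$ into one for $\otimes^!$, and then checking by hand that its preimage under $F$ is closed under tensoring with integrable connections (which generate), using $F(M)\otimes^!F(N)\cong F(M\otimes_{\cO}N)[-d]$. Some argument of this kind is needed to transport simplicity across $F$.
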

\begin{proof}
Let $X\into Y$ be a closed embedding into a regular $k$-variety.
If we can prove the statement for $Y$ then we obtain the statement for $X$.
In other words, we may assume $X$ is regular to start with.
Set $\CS(X)=\Dbh(\cD_X)$.
Given $V\subseteq X$ a locally closed subset, choose an open $j:U\into X$ such that $V\subseteq U$ is a closed subset.
Temporarily denote by $\CS^{(U)}(V)$ the full subcategory of $C(X)$ spanned by objects isomorphic to $j_!M$ where $M|_{U\backslash V}=0$.
We claim that this is independent of the choice of~$U$.
Indeed, if $V$ is already closed in~$X$ (one easily reduces to this case) and $M|_{U\backslash V}=0$ then $(j_!M)|_{X\backslash V}\cong q_!(M|_{U\backslash V})=0$, where $q:U\backslash V\into X\backslash V$.
This shows that $\CS^{(U)}(V)\subseteq\CS^{(X)}(V)$.
Conversely, let $M\in\CS^{(X)}(V)$ so that $M|_{X\backslash V}=0$.
We need to show that $j_!j^*M\to M$ is invertible, or by Verdier duality, that $\DD M\to j_*j^*\DD M$ is invertible.
The fiber of this morphism is $\Gamma_{X\backslash U}(\DD M)$ and we know that $\nu\Gamma_{X\backslash U}(\DD M)\cong\Gamma_{X\backslash U}(\nu\DD M)=0$ since $(\DD M)|_{X\backslash V}=0$.
We conclude that $\CS^{(U)}(V)=\CS^{(X)}(V)$ and we may unambigously write $\CS(V)$ from now on.

Continuing with the same notation, we define a restriction functor $|_V:\CS(X)\to\CS(V)$ by sending $M$ to
\[
j_!\cone(r_!r^*j^*M\to j^*M)
\]
where $r:U\backslash V\into U$.
This is indeed functorial because there is, for $f:M\to N$, a \emph{unique} morphism $M|_V\to N|_V$ fitting into a morphism of the obvious distinguished triangles.\footnote{It suffices to show $\hom(j_!r_!r^*j^*M[1],N|_V)=0$, or by adjunction, $\hom(r^*j^*M[1],r^*j^*(N|_V))=0$ but $r^*j^*(N|_V)$ is the cone of the isomorphism $r^*r_!r^*j^*N\to r^*j^*N$ hence vanishes.}
Exploiting this same fact and using the octahedron axiom one may show that $|_V$ is canonically an exact functor.
Moreover, since $|_V$ restricts to the identity functor on $C(V)$ there is an essentially unique way of turning $C(V)$ into a tt-category so that $|_V$ is a tt-functor.
It is given by $M\otimes_{C(V)}N:=(M\otimes N)|_V$ with unit $\one|_V$.

For two different choices of $U$ there is a \emph{canonical} isomorphism between the two restriction functors (by the same argument as in the first paragraph) and from this one deduces that there is a pseudo-functor $\CS:\Sub[X]\op\to\ttCat$ sending $V$ to $\CS(V)$ and sending an inclusion of locally closed subsets $V\subseteq W$ to the restriction $|^W_V:=(|_V)|_{\CS(W)}:\CS(W)\to\CS(V)$.
This notation is not too abusive since for regular $V\subseteq X$, we have a canonical equivalence $\CS(V)\simeq\Dbh(\cD_V)$, by Kashiwara's lemma, which is compatible with the restriction functors.

We now verify the axioms of a constructible system.
For \bref{ax:loc}, let $U\xinto{j}V\xfrom{i}Z$ be an open-closed partition of $V$, where $V\subseteq X$ is a closed subset.
(Again, one easily reduces to this particular case of the axiom.)
In that case $X\backslash Z$ is an open and we denote by $j':X\backslash Z\into X$ the inclusion.
Then $U\subset X\backslash Z$ is a closed subset and the restriction $|^V_U:\CS(V)\to\CS(U)$ is given by $M\mapsto j'_!(j')^*M$ which has a right adjoint $N\mapsto j'_*(j')^*N$.
The restriction $|^V_Z:\CS(V)\to\CS(Z)$ is given by $M\mapsto \cone(j'_!(j')^*M\to M)$ which has a right adjoint given by the inclusion $\CS(Z)\into\CS(X)$.
The image is by construction the kernel of $|^V_U$.

For \bref{ax:cons}, let $V\subseteq X$ be a regular subvariety.
Under the canonical equivalence $\Dbh(\cD_V)\simeq\CS(V)$ we let $\CSls(V)$ be the full subcategory of $\CS(V)$ spanned by complexes whose cohomologies are integrable connections.
Equivalently, it is spanned by objects isomorphic to $j_!i_*M$ where $V\xinto{i}U\xinto{j}X$ is a factorization of a closed and an open immersion, and where $M\in\Dbh(\cD_V)$ is lisse in the sense above.
Note that a holonomic $\cD$-module $M$ is an integrable connection iff $\nu M$ is a vector bundle.
Also note that $M\in\Dbh(\cD_V)$ is lisse iff its Verdier dual $\mathbb{D}M$ is. 
It is well-known that every holonomic $\cD$-module is generically lisse thus the first condition in \bref{ax:cons}.
For the second one let $f:V\into W$ be an immersion between regular subvarieties of~$X$ and let $M\in\Dbh(\cD_W)$ be lisse.
Then $\nu \mathbb{D} f^*M\cong\nu f^!\mathbb{D}M\cong\dL f^*\nu\mathbb{D}M[d_f]$ (where $d_f=\dim(V)-\dim(W)$) and the claim follows from the fact that $\dL f^*$ preserves vector bundles.
The last condition follows, using the conservative functor $\nu$, from the fact that the support of a vector bundle is both open and closed.

To prove \bref{ax:simple} fix an integral subvariety~$V\subseteq X$ with generic point $x$.
We may clearly assume $V$ is regular.
For each open $x\in U\subseteq V$ we denote by $\hDmod{U}$ the category of holonomic $\cD_U$-modules and by $\hDmod{x}:=\textup{2-colim}_U\hDmod{U}$ the colimit.
By~\cite[Lemma~2.1.1]{beilinson87}, the canonical functor $F:\Db(\hDmod{x})\to\Dbh(\cD_x)$ is an equivalence of triangulated categories.
Moreover, $\hDmod{x}=\iC(x)$ is the colimit of the $\kk$-linear Tannakian categories of integrable connections $\iC(U)$ (with the usual tensor product over $\cO_U$).
By \Cref{sta:cs-gen-simple-criterion}, we see that the domain of~$F$, the tt-category $\Db(\iC(x))$, is simple.
However, $F$ is \textsl{apriori} not a tensor functor so some care needs to be taken to conclude.
Assume \textsl{ad absurdum} that $\Dbh(\cD_x)$ contains a non-trivial tt-ideal~$\cK'$, that is, $\cK'\neq 0,\Dbh(\cD_x)$.
Applying Verdier duality we see that also $\Dbh(\cD_x)$ with the tensor structure given by~$\otimes^!$ contains a non-trivial tt-ideal $\cK:=\mathbb{D}\cK'$.
We will obtain a contradiction by showing that the non-trivial thick subcategory $F^{-1}(\cK)\subseteq\Db(\iC(x))$ is a tt-ideal.
For this it suffices to show that $F^{-1}(\cK)$ is closed under tensoring with integrable connections since these generate $\Db(\iC(x))$.
Given $M\in F^{-1}(\cK)$ and $N\in\iC(x)$ choose $U\subseteq V$ so that $M\in\Db(\iC(U))$ and $N\in\iC(U)$.
Then we have $F(M)\otimes^! F(N)\in\cK$ and the latter is represented by
\[
F(M)\otimes^{\dL}_{\cO_U}F(N)[-d]\cong F(M\otimes N)[-d].
\]
It follows that $F(M\otimes N)\in\cK$ and we arrive at the required contradiction.
\end{proof}

\begin{rmk}
\label{rmk:mic(x)}
The category $\iC(x)=\textup{2-colim}\iC(U)$ appearing in the proof is nothing but the category of $k(x)$-vector spaces with an integrable connection $(M,\nabla:M\to M\otimes_{k(x)}\Omega^1_{k(x)/k})$.
They played a role for example in~\cite{MR667751,MR2211153}.
In particular, there is a canonical fiber functor to $k(x)$-vector spaces (which forgets the connection).
\end{rmk}

\begin{rmk}
\label{rmk:regular-singularities}%
Restricting to the full subcategory $\Dbrh(\cD_X)$ of complexes whose total cohomology is a regular holonomic $\cD$-module, one obtains another generically simple constructible system.
Moreover, if $k=\CC$, it is equivalent to constructible analytic sheaves, by the Riemann-Hilbert correspondence.
\end{rmk}

\subsection{Mixed Hodge modules}
Here we work with complex varieties.
For a regular variety $Y$ we denote by $\Dbmhm{Y}$ the bounded derived category of mixed (algebraic) Hodge modules on~$Y$ in the sense of Saito~\cite{MR1047415}.
According to~\cite[Theorem~0.1]{MR1047415} there is a conservative tt-functor $\rat:\Dbmhm{Y}\to\Dbc(Y;\QQ)$ which is compatible with the usual functoriality in~$Y$.
\begin{prop}
\label{sta:dbmhm}%
Assume $X$ may be embedded in a regular variety.
The assignment $V\mapsto\Dbmhm{V}$ for regular $V\subseteq X$ extends to a generically simple constructible system on~$X$.
\end{prop}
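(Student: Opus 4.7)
The plan is to mimic the proof of \Cref{sta:dbh-gscs}, using the conservative tt-functor $\rat: \Dbmhm{-} \to \Dbc(-;\QQ)$ provided by Saito in place of the forgetful functor~$\nu$. First I would embed $X$ into a regular variety and reduce to the case where $X$ itself is regular, exactly as in \loccit. For each locally closed $V \subseteq X$ I would then define $\CS(V)$ as the full subcategory of $\Dbmhm{X}$ consisting of objects isomorphic to $j_!M$, where $j: U \into X$ is an open containing $V$ as a closed subset and $M|_{U\setminus V} = 0$. The proof that this is independent of~$U$, the construction of the restriction functors $|_V$, and the identification $\CS(V) \simeq \Dbmhm{V}$ for regular~$V$ (via Saito's analogue of Kashiwara's lemma) should all transfer verbatim from the D-module argument, invoking conservativity of $\rat$ wherever the original uses conservativity of~$\nu$.

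Having assembled $\CS: \Sub[X]\op \to \ttCat$, the axioms \bref{ax:loc} and \bref{ax:cons} can be obtained in one stroke by invoking \Cref{sta:lift-cs-conservative} applied to the pseudo-natural transformation $\rat: \CS \to \Dbc(-;\QQ)$. The target, restricted to $\Sub[X]$, is a constructible system by \Cref{sta:Dbc-gscs}; the functor $\rat$ is conservative by~\cite[Theorem~0.1]{MR1047415}; and it commutes with $f_*$ along immersions because Saito's six-functor formalism lifts the classical one.

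For \bref{ax:simple}, fix the generic point $x$ of an integral regular $V \subseteq X$. Since $\Dbmhm{U} = \Db(\MHM(U))$ for regular~$U$ and $\Db$ commutes with filtered 2-colimits, I obtain a triangulated equivalence $F: \Db(\MHM(x)) \xto{\sim} \Dbmhm{x}$, where $\MHM(x) := \textup{2-colim}_U \MHM(U)$. I would then argue that $\MHM(x)$ is a $\QQ$-linear Tannakian category: every object of $\MHM(U)$ is, after shrinking~$U$ around~$x$, an admissible variation of mixed Hodge structure (up to the usual shift), such variations form $\QQ$-linear Tannakian categories, and their filtered colimit remains Tannakian by~\cite[Th\'eor\`eme~7.1]{MR1106898}. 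Then \Cref{sta:cs-gen-simple-criterion} yields simplicity of $\Db(\MHM(x))$. Since $F$ need not be tensor-compatible, I would conclude as in the last paragraph of \Cref{sta:dbh-gscs}: a hypothetical non-trivial tt-ideal $\cK \subseteq \Dbmhm{x}$ would pull back to a non-trivial tt-ideal $F^{-1}(\cK) \subseteq \Db(\MHM(x))$, contradicting its simplicity. The main obstacle is precisely this identification of $\MHM(x)$ with a Tannakian category of variations, which requires showing that generic mixed Hodge modules trivialize to admissible VMHS on sufficiently small opens and that the generic tensor/duality structure inherited in the colimit satisfies Deligne's internal axioms.
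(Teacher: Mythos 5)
Your proposal is correct and follows essentially the same route as the paper: extend the assignment to a pseudo-functor exactly as in the $\cD$-module case with $\rat$ in place of $\nu$, deduce \bref{ax:loc} and \bref{ax:cons} from the constructible system of analytic sheaves via conservativity of $\rat$, and verify \bref{ax:simple} by identifying $\Dbmhm{x}$ with $\Db$ of the filtered colimit of the Tannakian categories $\VMHSadm(U)$ and invoking \Cref{sta:cs-gen-simple-criterion}. The only (harmless) divergence is that you import the ``$F$ not a priori tensor'' workaround from the $\cD$-module proof, which is unnecessary here: since $\Dbmhm{U}$ is by definition $\Db(\MHM(U))$ with no dimension shift in the tensor product, the equivalence $\Db(\MHM(x))\to\Dbmhm{x}$ is already a tt-functor.
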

\begin{proof}
The argument for extending the assignment to a pseudo-functor $\Sub\op\to\ttCat$ is the same as in \Cref{sta:dbh-gscs}, using the functor $\rat$ instead of $\nu$.
The fact that this is a constructible system then follows from \Cref{sta:cs-sub} and \Cref{sta:Dbcadic-gscs}, using again the functor~$\rat$.
For \bref{ax:simple} let $V$ be a connected regular subvariety of~$X$ with generic point~$x$.
Since filtered colimits commute with $\Db$ (for example by~\cite[Lemma~2.6]{gallauer:tannaka-dmet}), the canonical tt-functor $\Db(\MHM(x))\to \Db(\MHM)(x)$ is an equivalence.
But $\MHM(x)$ is the filtered colimit, over $x\in U\subseteq V$ open, of the $\QQ$-linear Tannakian categories $\VMHSadm(U)$ of admissible variations of mixed Hodge structures on~$U$~\cite[p.\,313]{MR1047415}.
We conclude again with \Cref{sta:cs-gen-simple-criterion}.
\end{proof}

\begin{rmk}
\label{rmk:Dbgm-MHM}%
Another example of a generically simple constructible system should be formed by mixed Hodge modules of geometric origin~\cite[\S\,7]{saito:formalism}.
\end{rmk}

\subsection{Motivic sheaves}
\label{sec:motivic-sheaves}
Let $\EE$ be a characteristic zero field and denote, for any scheme $X$, by $\DM(X;\EE)$ a `good' category of motivic sheaves on $X$ with coefficients in~$\EE$.
To fix our ideas we take $\DM(X;\EE)$ to mean \'etale motivic sheaves $\DAet(X;\EE)$ in the sense of~\cite{ayoub:etale-realization}.
By~\cite{cisinski-deglise:DM}, we could equivalently consider Beilinson motives.
Let $\DMc(X;\EE)$ denote the subcategory of constructible motivic sheaves, that is, the thick subcategory generated by the motives of separated smooth $X$-schemes and their negative Tate twists.
Equivalently (at least if $X$ is finite dimensional), these are precisely the compact objects~\cite[Proposition~8.3]{ayoub:etale-realization}.

Now fix a base field~$k$.
Then $\DMc(k;\EE)$ is nothing but Voevodsky's triangulated category of geometric motives.
Recall that one expects a `motivic t-structure' on $\DMc(k;\EE)$, with heart $\MM(k;\EE)$, the abelian category of mixed motives over~$k$.
This should be a Tannakian category with fiber functors induced by the classical cohomology theories.
More optimistically one might hope that $\DMc(k;\EE)$ is equivalent to $\Db(\MM(k;\EE))$, which would imply that $\DMc(k;\EE)$ is a simple tt-category.
We refer the reader to~\cite[\S\,3]{gallauer:tannaka-dmet} for an extended discussion of this point.
(The main ingredient in proving the simplicity of~$\Db(\MM(k;\EE))$ is, as in \Cref{sta:cs-gen-simple-criterion}, \cite[Theorem~2.4]{gallauer:tannaka-dmet}.)

In the sequel we will assume this holds for all fields relevant to the discussion.
Namely, let $X$ be a quasi-projective $\kk$-variety.\footnote{We restrict to quasi-projective varieties only to avail ourselves of convenient references.}
\begin{hyp}
\label{hyp:DM-gs}
The tt-category $\DMc(\kk(x);\EE)$ is simple for every $x\in X$.
\end{hyp}\noindent

In addition we assume that $k$ and $\EE$ satisfy one of the following two conditions:
\begin{enumerate}[(1)]
\item $k\subseteq\CC$ and $\EE$ is arbitrary, or
\item $\ell$ is a prime number invertible in $\kk$ and $\EE$ is a finite extension of $\QQ_\ell$.
\end{enumerate}
In the first case let $\Re_X:\DMc(X;\EE)\to\Dbc(X;\EE)$ denote the Betti realization~\cite[D\'efinition~2.1]{ayoub:betti}, and in the second case the \'etale realization~\cite[D\'efinition~9.6]{ayoub:etale-realization}.
(When both conditions are satisfied, the choice of realization doesn't matter for what follows.)

\begin{prop}
\label{sta:dm-gscs}%
Under \Cref{hyp:DM-gs} the pseudo-functor $\DMc(-;\EE)$ is a generically simple constructible system on $X$.
\end{prop}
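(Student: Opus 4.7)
My plan is to deduce the statement from the constructible system $\Dbc(-;\EE)$ of \Cref{sta:Dbc-gscs} (in case~(1)) or \Cref{sta:Dbcadic-gscs} (in case~(2)) by applying \Cref{sta:lift-cs-conservative} to the realization, viewed as a pseudo-natural transformation $\Re : \DMc(-;\EE)|_{\Sub[X]} \to \Dbc(-;\EE)|_{\Sub[X]}$. If the hypotheses of that lemma are met, it immediately yields $\DMc(-;\EE)$ as a constructible system on $\Sub[X]$, with lisse subcategories forced to be $\Re_V^{-1}(\Dbls(V;\EE))$. This reduces the verification of \bref{ax:loc} and \bref{ax:cons} to a handful of checks on~$\Re$.

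The two hypotheses of \Cref{sta:lift-cs-conservative} I would check are: (i) conservativity of $\Re_V$ on constructible motives for each locally closed $V \subseteq X$, and (ii) the base-change isomorphism $\Re_X f_* \simeq f_* \Re_V$ for immersions~$f$. Compatibility (ii) is a standard consequence of the realization being a morphism of six-functor formalisms. Conservativity (i) is the substantive input; it is a theorem of Ayoub (\cite{ayoub:betti} for Betti, and \cite{ayoub:etale-realization} for \'etale) and ultimately relies on the rigidity of \'etale motivic cohomology with rational coefficients.

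For \bref{ax:simple}, my plan is to identify $\CS(x) = \textup{2-colim}_{U \ni x} \DMc(U;\EE)$, over non-empty open subsets $U$ of the closure of~$x$, with $\DMc(\Spec\kk(x);\EE)$. This is the continuity property of motivic sheaves (see e.g.\ \cite{ayoub:etale-realization}, or the analogous statement for Beilinson motives in \cite{cisinski-deglise:DM}). Once this identification is in place, \Cref{hyp:DM-gs} directly ensures that $\CS(x)$ is simple, completing the verification.

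The main obstacle, and the only non-formal ingredient, is the conservativity of the realization on constructible motives; this is precisely what motivates the case distinction on $(\kk,\EE)$ in this section. Given that input, together with continuity of $\DMc$ and \Cref{hyp:DM-gs}, the remaining arguments amount to bookkeeping with the abstract framework of \Cref{sec:CS} and the already-established results for constructible sheaves.
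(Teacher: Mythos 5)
Your overall strategy coincides with the paper's: apply \Cref{sta:lift-cs-conservative} to the realization $\Re$ viewed as a pseudo-natural transformation to $\Dbc(-;\EE)$, check compatibility with $f_*$ via the fact that $\Re$ commutes with the six operations, and verify \bref{ax:simple} by combining the continuity equivalence $\DMc(x;\EE)\simeq\DMc(\kk(x);\EE)$ with \Cref{hyp:DM-gs}. All of that is fine.

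The genuine gap is your treatment of hypothesis (i). The conservativity of the Betti (or $\ell$-adic) realization on constructible motives with \emph{rational} (or characteristic zero) coefficients is not a theorem of Ayoub or anyone else: it is precisely the open \emph{conservativity conjecture} for motives. The rigidity results you allude to give conservativity only with torsion or finite coefficients, and they do not descend to $\QQ$- or $\QQ_\ell$-coefficients. So as written, your argument rests on an unproved conjecture that is not among the stated hypotheses. The paper avoids this by \emph{deriving} the conservativity of $\Re_Y$ from \Cref{hyp:DM-gs} itself: what Ayoub does prove (continuity, \cite[Corollaire~3.22]{ayoub:etale-realization}, together with \cite[Proposition~3.24]{ayoub:etale-realization}) is that the family of restriction functors $\rho_x:\DMc(Y;\EE)\to\DMc(x;\EE)$, $x\in Y$, is jointly conservative. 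Since each $\DMc(x;\EE)\simeq\DMc(\kk(x);\EE)$ is simple by hypothesis, any non-trivial tt-functor out of it is conservative (its kernel is a proper tt-ideal, hence zero); in particular each $\Re_x$ is conservative. Hence the family $\Re_x\circ\rho_x=\rho_x\circ\Re_Y$ is conservative, and therefore so is $\Re_Y$. You should replace your appeal to the conservativity conjecture by this deduction; with that substitution your proof goes through.
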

\begin{proof}
Fix a locally closed subvariety $Y\subseteq X$.
For $x\in Y$ the canonical functor
\begin{equation}
\label{eq:pt-vs-pt}
\DMc(x;\EE)\isoto\DMc(k(x);\EE)
\end{equation}
is an equivalence, by~\cite[Corollaire~3.22]{ayoub:etale-realization}.
Therefore, \Cref{hyp:DM-gs} implies directly \bref{ax:simple}.
It follows from the equivalence in~(\ref{eq:pt-vs-pt}) together with~\cite[Proposition~3.24]{ayoub:etale-realization} that the family $\rho_x:\DMc(Y;\EE)\to\DMc(x;\EE)$ is conservative when $x$ runs through all points of~$Y$.
Since non-trivial tt-functors out of simple tt-categories are necessarily conservative we deduce that the family $\Re_x\circ\rho_x:\DMc(Y;\EE)\to\Dbc(x;\EE)$ is also conservative, where we denote by $\Re_x$ the functor
\[
\DMc(x;\EE)=\textup{2-colim}_V\,\DMc(V;\EE)\xto{\textup{2-colim}_V\Re_V}\textup{2-colim}_V\,\Dbc(V;\EE)=\Dbc(x;\EE)
\]
But $\Re_x\circ\rho_x=\rho_x\circ\Re_Y$ so that $\Re_Y$ is necessarily conservative too.

Next, the functor $\DMc(-;\EE)$ underlies a fully-fledged six-functor formalism~\cite[\S\,8]{ayoub:etale-realization}, and by~\cite[Th\'eor\`eme~9.6]{ayoub:etale-realization} and~\cite[Th\'eor\`eme~3.19]{ayoub:betti}, the realization functor~$\Re$ commutes with the six functors.
By the preceding argument, we may now apply \Cref{sta:lift-cs-conservative} (and \Cref{sta:Dbc-gscs,sta:Dbcadic-gscs}) to conclude.
\end{proof}

\section{Support theory}
\label{sec:support}

We first define, for every generically simple constructible system, a support theory (\Cref{sec:supp-constr-object}).
We then show that it is the universal support theory in the sense of Balmer~\cite{balmer:spectrum}, and we derive consequences (\Cref{sec:main-result}).
In particular, we prove \Cref{thrm:intro-spc} of the introduction.

\subsection{The support of a constructible object}
\label{sec:supp-constr-object}
Let $\CS:\V\op\to\ttCat$ be a generically simple constructible system as in \Cref{cs}. 
Also fix an object $X\in\V$.

\begin{defn}
\label{defn:support}%
Let $M\in\CS(X)$.
We define its \emph{support} as a subset of the points of the scheme~$X$:
\[
\Supp(M):=\{x\in X\mid M\notin\pr_x\}
\]
\end{defn}

\begin{rmk}
\label{rmk:supp-alternative}%
In other words, $x\notin\Supp(M)$ if and only if $M$ vanishes on a locally closed subset containing~$x$.
This follows immediately from the definitions.
\end{rmk}

\begin{rmk}
\label{rmk:constructible-topology}%
In the following we will use the constructible topology.
For a topological space~$T$ we denote by $2^T$ the set of its subsets.
Recall that (at least for $T$ noetherian) $V\in 2^T$ is called constructible if it belongs to the Boolean subalgebra generated by open subsets.
Equivalently, if $V$ is a finite union of locally closed subsets.
The constructible topology $\cons{X}$ on the scheme~$X$ is generated by the constructible subsets~\cite[1.9.13]{MR173675}.
Hence the open subsets of $\cons{X}$ are the unions of locally closed subsets.
The closed subsets are intersections of constructible subsets.

The space $\cons{X}$ is a Stone space: compact, Hausdorff and totally disconnected.
In fact, it is the image of $X$ under the right adjoint to the inclusion of Stone spaces into coherent spaces.
An open subset of $\cons{X}$ is (quasi-)compact iff clopen in $\cons{X}$ iff constructible in $X$.
It follows that $(\cons{X})^*=\cons{X}$ where $(-)^*$ denotes the Hochster dual (cf.\ \Cref{exa:spc-lattice}).
In other words, the `Thomason subsets'~\cite{balmer:spectrum} of $\cons{X}$ coincide with the open ones.
\end{rmk}

\begin{prop}
\label{sta:support-constructible}%
Let $M\in\CS(X)$.
Then $\Supp(M)\subseteq X$ is a constructible subset.
\end{prop}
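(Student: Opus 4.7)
The plan is to proceed by noetherian induction on~$X$, splitting $X$ into an open piece where $M$ becomes lisse and a smaller closed piece handled by the induction hypothesis.

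First, by axiom \bref{ax:cons}(1), choose a dense regular open $j: U \into X$ with $M|_U \in \CSls(U)$, and let $i: Z \into X$ be the closed complement. Using the description of $\rho_x$ as a 2-colimit over locally closed neighborhoods of~$x$ (\Cref{notn:C(x)}), one checks that $\rho_x(M) \simeq \rho_x(M|_U)$ for $x \in U$ and $\rho_x(M) \simeq \rho_x(M|_Z)$ for $x \in Z$. Therefore
\[
\Supp(M) = \Supp(M|_U) \sqcup \Supp(M|_Z),
\]
where the two pieces are computed inside $U$ and $Z$ respectively. By noetherian induction applied to~$Z$ (which is closed, strictly smaller, and inherits our axioms since $\V$ is closed under immersions), $\Supp(M|_Z)$ is constructible in~$Z$, hence constructible in~$X$.

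It remains to see that $\Supp(M|_U)$ is constructible in~$U$. Since $U$ is noetherian, it has finitely many connected components; write $U = \bigsqcup_{k=1}^n U_k$ with each $U_k$ clopen (hence regular, hence in~$\V$, and still equipped with the lisse restriction $M|_{U_k}$ by \bref{ax:cons}(2)). On each $U_k$, pick any $x \in U_k$; the functor $\rho_x|_{\CSls(U_k)}$ is conservative by \bref{ax:cons}(3). Thus either $M|_{U_k} = 0$, in which case $\Supp(M) \cap U_k = \varnothing$, or $M|_{U_k} \neq 0$, in which case $\rho_y(M|_{U_k}) \neq 0$ for \emph{every} $y \in U_k$ (again by conservativity of $\rho_y|_{\CSls(U_k)}$, applied to each $y$), so that $\Supp(M) \cap U_k = U_k$. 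Hence $\Supp(M|_U)$ is a finite union of connected components of~$U$, which is open-closed in~$U$ and in particular constructible in~$X$.

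Combining the two pieces, $\Supp(M)$ is a finite union of locally closed subsets of~$X$, hence constructible. The main subtlety is the verification that $\rho_x$ computed at the level of $X$ agrees with $\rho_x$ computed on the appropriate restriction to $U$ or $Z$; but this is immediate from the definition of $\CS(x)$ as a 2-colimit over locally closed neighborhoods, since those neighborhoods eventually lie entirely in $U$ or in~$Z$ depending on whether $x$ lies in $U$ or not. Everything else is a routine combination of axioms \bref{ax:loc} and \bref{ax:cons}.
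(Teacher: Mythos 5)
Your proof is correct, and it globalizes the key local input differently from the paper. The paper's proof shows that $\Supp(M)$ is simultaneously open and closed in the constructible topology $\cons{X}$: openness is obtained by exactly the reduction you use (replace $X$ by $\cl(x)$, shrink to a dense regular connected open on which $M$ is lisse, and invoke the conservativity condition of \bref{ax:cons}), closedness follows directly from \Cref{rmk:supp-alternative}, and one then concludes because the clopen subsets of the Stone space $\cons{X}$ are precisely the constructible subsets of $X$ (\Cref{rmk:constructible-topology}). You instead run a noetherian induction on closed subsets and exhibit $\Supp(M)$ explicitly as a finite union of locally closed pieces, using that on each connected component $U_k$ of the lisse locus the support is all of $U_k$ or empty. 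The essential ingredient (generic lisse-ness together with conservativity of $\rho_x|_{\CSls}$ on connected regular schemes) is the same in both arguments; what differs is that your route avoids any appeal to the compactness of $\cons{X}$ and produces an explicit finite stratification adapted to $M$, whereas the paper's route is shorter because it outsources the finiteness to the topology of $\cons{X}$. The small verifications you flag are indeed immediate: $\rho_x(M)\cong\rho_x(M|_V)$ for any locally closed neighborhood $V$ of $x$ by \Cref{notn:C(x)}, and each $U_k$ lies in $\V$ (being an immersion into $X$) and carries the lisse object $M|_{U_k}$ by condition (2) of \bref{ax:cons}.
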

\begin{proof}
Let $x\in\Supp(M)$.
We are going to show that there exists a locally closed subset $x\in V\subseteq\Supp(M)$. 
Replacing $X$ by $\cl(x)$ and $M$ by $M|_{\cl(x)}$, we may assume $X$ integral with generic point~$x$.
Replacing $X$ by an open dense subset we may assume that $X$ is regular, connected, and that $M$ is lisse.
By the last condition in \bref{ax:cons}, we then have $\Supp(M)=X$.

This proves that $\Supp(M)\subseteq\cons{X}$ is open.
\Cref{rmk:supp-alternative} implies that $\Supp(M)\subseteq\cons{X}$ is closed.
We conclude with \Cref{rmk:constructible-topology}.
\end{proof}

\begin{lem}
\label{sta:support}%
The map $\Supp:\mathrm{Obj}(\CS(X))\to 2^{\cons{X}}$ is a support datum in the sense of~\cite{balmer:spectrum}.
\end{lem}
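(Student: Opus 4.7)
The plan is to verify directly the axioms of a Balmer support datum, leveraging the fact that each $\pr_x$ is a prime tt-ideal of $\CS(X)$ (\Cref{defn:primes}). Recall that a support datum on a tt-category valued in closed subsets of a topological space requires: (SD1) $\Supp(0)=\emptyset$ and $\Supp(\one)=X$; (SD2) $\Supp(M\oplus N)=\Supp(M)\cup\Supp(N)$; (SD3) $\Supp(M[1])=\Supp(M)$; (SD4) $\Supp(N)\subseteq\Supp(M)\cup\Supp(P)$ for every distinguished triangle $M\to N\to P\to M[1]$; and (SD5) $\Supp(M\otimes N)=\Supp(M)\cap\Supp(N)$. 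The target space here is $\cons{X}$, and we have already shown in \Cref{sta:support-constructible} that supports are constructible, hence closed in $\cons{X}$ by \Cref{rmk:constructible-topology}, so the set-theoretic image is correct.

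The plan is then to dispense with (SD2), (SD3), (SD4) in one stroke: each $\pr_x$ is a thick triangulated subcategory, so it is closed under shifts, direct summands (hence direct sums of its members), and taking the third vertex of a distinguished triangle whose other two vertices lie in $\pr_x$. Rewriting these closure properties via $x\in\Supp(M)\iff M\notin\pr_x$ yields the three axioms immediately (with (SD2) and (SD4) following by contrapositive). For (SD1), $0\in\pr_x$ for all $x$, while $\one\notin\pr_x$ for any $x$ since by primality $\one\in\pr_x$ would force $\pr_x=\CS(X)$, contradicting $\pr_x\neq\CS(X)$ which is part of primality.

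Finally, (SD5) is where we spend the primality hypothesis again: by definition, $M\otimes N\in\pr_x$ if and only if $M\in\pr_x$ or $N\in\pr_x$, i.e. $x\notin\Supp(M\otimes N)$ iff $x\notin\Supp(M)$ or $x\notin\Supp(N)$, which rearranges to the required equality $\Supp(M\otimes N)=\Supp(M)\cap\Supp(N)$. Since there are no real obstacles, the main point is simply to observe that \Cref{defn:primes} was set up precisely so that the Balmer axioms are tautological; the one piece of non-trivial input has already been discharged by \bref{ax:simple} (which ensures $\pr_x$ is prime) and by \Cref{sta:support-constructible} (which ensures the values of $\Supp$ are closed in $\cons{X}$).
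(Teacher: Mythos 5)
Your proof is correct and follows exactly the paper's own (much terser) argument: closedness of the values in $\cons{X}$ comes from \Cref{sta:support-constructible}, and the remaining axioms are immediate from each $\pr_x$ being a prime tt-ideal. You have simply spelled out the details the paper leaves to the reader.
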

\begin{proof}
By \Cref{sta:support-constructible}, $\Supp(M)\subseteq\cons{X}$ is closed for every $M\in\CS(X)$.
All other required properties follow easily from the $\pr_x$ being prime tt-ideals.
\end{proof}

\begin{notn}
Of course, we may extend supports to arbitrary sets of objects $\cK\subseteq\CS(X)$:
\[
\Supp(\cK):=\bigcup_{M\in\cK}\Supp(M)\subseteq\cons{X}
\]
Conversely, if $V\subseteq \cons{X}$ is a subset, we define $\cI_V\subseteq\CS(X)$ to be the full subcategory spanned by
\[
\{M\in\CS(X)\mid \Supp(M)\subseteq V\}.
\]
\end{notn}
The following are the `easy' properties of these associations which will subsequently be shown to be inverses to each other. 
\begin{lem}
\label{sta:ideal-support-easy}%
\begin{enumerate}[(a)]
\item For any set $\cK\subseteq\CS(X)$, the subset $\Supp(\cK)\subseteq \cons{X}$ is open.
\item For any $V\subseteq\cons{X}$, the subcategory $\cI_V\subseteq \CS(X)$ is a radical tt-ideal.\footnote{A tt-ideal $\cI$ is said to be radical if $M^{\otimes n}\in\cI$ implies $M\in\cI$.
  If $\cI$ is a tt-ideal, its radical $\sqrt{\cI}:=\{M\mid M^{\otimes n}\in\cI\}$ is a radical tt-ideal~\cite[Lemma~4.2]{balmer:spectrum}.
  More generally, for any set $\cK$ we denote by $\radtt{\cK}$ the radical tt-ideal generated by~$\cK$.}
\item For any set $\cK\subseteq\CS(X)$ the following inclusion holds:
\[
\radtt{\cK}\subseteq\cI_{\Supp(\cK)}
\]
\item For any subset $V\subseteq\cons{X}$, we have
\[
 \Supp(\cI_V)\subseteq V,
\]
with equality if $V$ is open.
\end{enumerate}
\end{lem}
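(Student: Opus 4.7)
The plan is to dispose of (a)--(c) using earlier results and to concentrate effort on the equality claim in (d).

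For (a), \Cref{sta:support-constructible} shows each $\Supp(M)$ is constructible in $X$, hence (by \Cref{rmk:constructible-topology}) clopen in $\cons{X}$; a union of opens is open. For (b), I would check directly that $\cI_V$ is closed under the operations of a radical tt-ideal, which reduces to the standard behaviour of $\Supp$: invariance under shifts and summands, $\Supp(N)\subseteq\Supp(M)\cup\Supp(P)$ for a distinguished triangle $M\to N\to P$, and $\Supp(M\otimes N)=\Supp(M)\cap\Supp(N)$; the last in particular yields $\Supp(M^{\otimes n})=\Supp(M)$, giving the radical property. Each of these follows routinely from $\pr_x$ being a prime tt-ideal. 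Part (c) is then immediate from (b): $\cI_{\Supp(\cK)}$ is a radical tt-ideal containing $\cK$, hence also $\radtt{\cK}$. The inclusion $\Supp(\cI_V)\subseteq V$ in (d) is tautological from the definition of $\cI_V$.

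What remains is the equality $\Supp(\cI_V)=V$ when $V$ is open in $\cons{X}$. Given $x\in V$, my aim is to produce an object $M\in\CS(X)$ with $x\in\Supp(M)\subseteq V$. By \Cref{rmk:constructible-topology}, $V$ is a union of locally closed subsets, so I may pick a locally closed immersion $f:W\into X$ with $x\in W\subseteq V$ and set $M:=f_!\one$. It then suffices to verify pointwise that $\Supp(f_!\one)=W$.

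I would handle this pointwise verification in three cases. If $y\in W$: using that $f^*f_!\simeq\id$ for any immersion (decompose $f=i\circ j$ as open-then-closed and apply $j^*j_!\simeq\id$ and $i^*i_*\simeq\id$), one gets $(f_!\one)|_W\simeq\one_W$, and hence $\rho_y(f_!\one)\simeq\one_y$, which is nonzero because $\CS(y)$ is simple. If $y\in X\setminus\overline{W}$: factor $f=i\circ j$ with $j:W\into\overline{W}$ open and $i:\overline{W}\into X$ closed; \bref{ax:loc} applied to the open complement $k:X\setminus\overline{W}\into X$ of $i$ gives $k^*i_*=0$, so $M$ vanishes on the open neighborhood $X\setminus\overline{W}$ of $y$. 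If $y\in\overline{W}\setminus W$: the set $\overline{W}\setminus W$ is closed in $\overline{W}$, hence locally closed in $X$, and contains $y$; using $i^*i_*\simeq\id$ gives $M|_{\overline{W}}\simeq j_!\one$, and applying \bref{ax:loc} once more to the recollement on $\overline{W}$ induced by the open $j:W\into\overline{W}$ yields $(j_!\one)|_{\overline{W}\setminus W}=0$. I expect the most delicate step to be this last one, where vanishing at $y$ is obtained only after restricting to $\overline{W}$ and invoking recollement a second time.
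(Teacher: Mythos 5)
Your proof is correct and follows essentially the same route as the paper: parts (a)--(c) reduce to \Cref{sta:support-constructible} and to the $\pr_x$ being prime (the paper phrases (b) as $\cI_V=\bigcap_{x\notin V}\pr_x$ being an intersection of primes, which is what your direct verification unpacks), and for (d) the paper likewise takes a locally closed $x\in W\subseteq V$ and uses the witness $\one_W=f_!\one$. Your three-case computation of $\Supp(f_!\one)=W$ just fills in the details the paper leaves implicit.
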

\begin{proof}
The first statement follows immediately from \Cref{sta:support-constructible}.
The second statement is a consequence of the fact that we have $\cI_V=\bigcap_{x\notin V}\pr_x$ which is an intersection of prime tt-ideals hence a radical tt-ideal itself.
The inclusion in the third statement follows immediately.
The inclusion in the last statement is obvious.
But if $V\subseteq \cons{X}$ is open and $x\in V$ there exists a locally closed subset $W\subseteq \zar{X}$ with $x\in W\subseteq V$.
With the notation of \Cref{notn:extension-by-zero}, we have $\one_W\in\left(\cap_{y\notin V}\pr_y\right)\backslash\pr_x$ which gives the reverse inclusion.
\end{proof}

\subsection{Classification}
\label{sec:main-result}

Here are our two main results.

\begin{thrm}
\label{sta:classification}%
Let $\CS$ be a generically simple constructible system on $\V$, and let $X\in\V$.
The assignments 
\[
\begin{tikzcd}[column sep=50pt]
\TI(\CS(X))
\ar[r, shift left=1.5, "\Supp", "\sim" below]
&
\Op(\cons{X})
\ar[l, shift left=1.5, "\cI"]
\end{tikzcd}
\]
between $\TI(\CS(X))$, the set of tt-ideals in $\CS(X)$, and $\Op(\cons{X})$, the set of open subsets of $\cons{X}$, are inclusion-preserving inverse bijections.
\end{thrm}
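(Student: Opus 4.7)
The plan is to reduce the bijection to a single pointwise generation claim proved by Noetherian induction, with the remaining bookkeeping handled by \Cref{sta:ideal-support-easy} and standard tt-formalism. Note that \Cref{sta:ideal-support-easy}(d) already supplies $\Supp\circ\cI=\id$, and part~(c) gives $\cI\subseteq\radtt{\cI}\subseteq\cI_{\Supp(\cI)}$. So I need only the reverse inclusion $\cI_{\Supp(\cI)}\subseteq\cI$ for any tt-ideal~$\cI$. Given $M\in\cI_{\Supp(\cI)}$, the set $\Supp(M)$ is clopen, hence quasi-compact, in the Stone space $\cons{X}$, while $\Supp(\cI)=\bigcup_{N\in\cI}\Supp(N)$ is an open cover; extracting a finite subcover and replacing it by the direct sum yields a single $N\in\cI$ with $\Supp(M)\subseteq\Supp(N)$. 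Everything then reduces to the key fact: if $\Supp(M)\subseteq\Supp(N)$, then $M\in\radtt{N}$.

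I would prove this key fact by Noetherian induction on $\overline{\Supp(M)}\subseteq X$. The base case $\Supp(M)=\emptyset$ reduces to showing $M=0$, which follows by a separate Noetherian induction on~$X$: on a dense regular open $U\subseteq X$ where $M|_U$ is lisse (by \bref{ax:cons}(1)), conservativity of $\rho_x|_{\CSls(U)}$ at the generic point $x$ forces $M|_U=0$, and \bref{ax:loc} passes the problem to the closed complement. For the inductive step, choose $x$ a generic point of a component of $\overline{\Supp(M)}$. Since $x\in\Supp(N)$, we have $\rho_xN\neq 0$ in the simple category $\CS(x)=\CSls(x)$ (\bref{ax:simple}), so $(\rho_xM)^{\otimes n}\in\ttid{\rho_xN}$ for some $n$. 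Unwinding the filtered 2-colimit $\CS(x)=\textup{2-colim}\CS(U)$ over open neighborhoods of~$x$, I find an open immersion $j:U\into X$ such that the same containment $(M|_U)^{\otimes n}\in\ttid{N|_U}$ already holds in $\CS(U)$.

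With that in hand, the recollement triangle $j_!j^*M\to M\to i_*i^*M\to^+$ splits the task. For the open piece, \Cref{sta:projection-formulae} (together with $j^*j_!=\id$ from the recollement) yields the identity $(j_!A)^{\otimes n}=j_!(A^{\otimes n})$ and the inclusion $j_!(\ttid{j^*N}_{\CS(U)})\subseteq\ttid{N}_{\CS(X)}$, whence $(j_!j^*M)^{\otimes n}\in\ttid{N}$; that is, $j_!j^*M\in\radtt{N}$. For the closed piece, $\overline{\Supp(i_*i^*M)}\subsetneq\overline{\Supp(M)}$ (since $x\in U$ has been excised) and $\Supp(i_*i^*M)\subseteq\Supp(N)$, so the induction hypothesis gives $i_*i^*M\in\radtt{N}$. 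The triangle then places $M\in\radtt{N}$.

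The final step is to pass from $M\in\radtt{\cI}$ back to $M\in\cI$, which amounts to the assertion that every tt-ideal of $\CS(X)$ is radical; this uses the $\QQ$-linearity (indeed rigidity) of the examples of interest. I expect the main obstacle to be the Noetherian induction for the key fact---specifically, coordinating the simplicity of $\CS(x)$ with the extension-by-zero machinery and the projection formula in order to propagate a generic relation to a global one. Everything else is formal manipulation in Balmer's support-theoretic language.
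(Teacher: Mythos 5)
Your strategy is essentially the paper's: a Noetherian induction that funnels everything to a generic point, where \bref{ax:simple} supplies the tt-ideal membership, followed by spreading out (\Cref{sta:spreading-out}) and the projection formul\ae/recollement to globalize; your preliminary reduction to a single $N$ via quasi-compactness of $\Supp(M)$ in the Stone space $\cons{X}$ is a harmless variant of the paper's device of picking a fresh $L\in\cK$ at each generic point. The genuine gap is your final step: you conclude only $M\in\radtt{N}$ and then pass to $M\in\cI$ by asserting that every tt-ideal of $\CS(X)$ is radical, justified by ``rigidity of the examples of interest.'' The theorem is stated for an arbitrary generically simple constructible system, whose axioms do not include rigidity, and the paper obtains radicality of all tt-ideals only as a \emph{consequence} of the classification (\Cref{rmk:radical-automatic}); so as written this step is circular in the stated generality. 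It is, however, easily repaired: since $\CS(x)$ is simple and $\rho_x N\neq 0$, the tt-ideal $\ttid{\rho_x N}$ is all of $\CS(x)$, so $\rho_x M\in\ttid{\rho_x N}$ on the nose ($n=1$), and if you track $\ttid{N}$ rather than $\radtt{N}$ through the induction you land directly in $\cI$ with no appeal to radicality.

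A second, smaller issue: in the inductive step your $x$ is a generic point of a component of $\overline{\Supp(M)}$, which need not be a generic point of $X$; then $\CS(x)$ is the $2$-colimit over \emph{locally closed} neighborhoods of $x$ in $X$ (\Cref{notn:C(x)}), so spreading out yields a locally closed $V\ni x$ rather than an open $U\subseteq X$, and the recollement triangle $j_!j^*M\to M\to i_*i^*M$ is not available as written. This is fixable---for instance, first replace $X$ by $\overline{\Supp(M)}$ using that $M\cong i_*i^*M$ (the open complement kills $M$ by your base case) and the second projection formula of \Cref{sta:projection-formulae}, or organize the induction on $X$ itself via \Cref{sta:radical-ideals-recollement} as the paper does---but the step needs to be spelled out.
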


\begin{cor}
\label{sta:main}%
Let $\CS$ be a generically simple constructible system on $\V$.
The composite functor
\[
\V\xto{\CS}\ttCat\op\xto{\Spc}\TopSpc
\]
is naturally isomorphic to $X\mapsto \cons{X}$.
In particular, for every $X\in\V$ we have
\[
\Spc(\CS(X))\cong \cons{X}.
\]
\end{cor}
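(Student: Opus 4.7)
The plan is to deduce this corollary from Theorem~\ref{sta:classification} by invoking Balmer's reconstruction of the spectrum from any classifying support datum~\cite{balmer:spectrum}.

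First, for each fixed $X \in \V$, I would check that $(\cons{X}, \Supp)$ is a classifying support datum on $\CS(X)$. The space $\cons{X}$ is a Stone, hence spectral, space whose Thomason subsets coincide with its open subsets (Remark~\ref{rmk:constructible-topology}), and Lemma~\ref{sta:support} already records that $\Supp$ is a support datum. Theorem~\ref{sta:classification} then supplies exactly the classification condition: $\cI$ and $\Supp$ are inverse inclusion-preserving bijections between tt-ideals in $\CS(X)$ (all of which are automatically radical here, since every such ideal equals $\cI_V = \bigcap_{x\notin V}\pr_x$) and the Thomason subsets of $\cons{X}$. Balmer's reconstruction theorem then produces a canonical homeomorphism
\[
\phi_X : \cons{X} \isoto \Spc(\CS(X)), \qquad x \longmapsto \pr_x.
\]

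Next I would verify naturality in $X$. Given a morphism $f: X \to Y$ in $\V$, the underlying set-theoretic map is continuous for the constructible topologies (preimages of constructible subsets are constructible), so $f$ induces a continuous map $\cons{X} \to \cons{Y}$; on the other side, $\Spc(f^*)$ sends a prime $\pr$ to $(f^*)^{-1}\pr$. The resulting square of maps commutes on points by Lemma~\ref{sta:primes-functoriality}, since both routes send $x \in X$ to $\pr_{f(x)} = (f^*)^{-1}\pr_x$. Thus the family $\{\phi_X\}_{X \in \V}$ assembles into a natural isomorphism of functors $\V \to \TopSpc$.

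I do not anticipate any genuine obstacle: once Theorem~\ref{sta:classification} is in hand, this corollary is a formal packaging via Balmer's reconstruction theorem, with naturality delivered essentially for free by Lemma~\ref{sta:primes-functoriality}. The only point requiring minor care is keeping straight the matching between the two bijections (Theorem~\ref{sta:classification} vs.\ Balmer's), namely that open subsets of $\cons{X}$ are precisely its Thomason subsets, which is recorded in Remark~\ref{rmk:constructible-topology}.
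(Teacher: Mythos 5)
Your proposal is correct and follows essentially the same route as the paper: the paper likewise obtains $\phi_X(x)=\pr_x$ from Balmer's universality of $\Spc$ applied to the support datum of Lemma~\ref{sta:support}, deduces that it is a homeomorphism from Theorem~\ref{sta:classification} together with the automatic radicality of tt-ideals (Remark~\ref{rmk:radical-automatic}) via the classification criterion of Balmer--Buan--Krause, and gets naturality from Lemma~\ref{sta:primes-functoriality}. Your care about matching open subsets of $\cons{X}$ with Thomason subsets is exactly the point recorded in Remark~\ref{rmk:constructible-topology}.
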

The spectrum $\Spc(\cT)$ of a tt-category~$\cT$ is the set of prime tt-ideals in~$\cT$ with a basis of closed subsets given by~$\supp(t)=\{\cP\mid t\notin\cP\}$.
We refer the reader to~\cite{balmer:spectrum} where it was introduced; see also \Cref{exa:spc-lattice}.

\begin{rmk}
\label{rmk:radical-automatic}%
Let $X\in\V$.
It follows from \Cref{sta:classification} that every tt-ideal in $\CS(X)$ is radical.
Indeed, \Cref{sta:ideal-support-easy} says that $\cI_V$ is a radical tt-ideal for every subset $V\subseteq X$.
\end{rmk}

\begin{proof}[Proof of \Cref{sta:main}]
By \Cref{sta:support} and the universality of $\Spc$ proved in~\cite[Theorem~3.2]{balmer:spectrum}, we get, for each $X\in\V$, a continuous map $\phi_X:\cons{X}\to\Spc(\CS(X))$.
Explicitly, it is given by $\phi_X(x)=\{M\in\CS(X)\mid x\notin\Supp(M)\}=\pr_x$.
It follows from \Cref{sta:primes-functoriality} that the $\phi_X$ are the components of a natural transformation $\phi:\cons{(-)}\to\Spc\circ\CS$.
By \Cref{rmk:radical-automatic}, all tt-ideals in $\CS(X)$ are radical.
\Cref{sta:classification} then says precisely that the criterion of~\cite[Corollary~5.2]{MR2280286} is satisfied so that $\phi_X$ is a homeomorphism.
\end{proof}

The remainder of this section is devoted to the proof of \Cref{sta:classification}.
We fix a generically simple constructible system~$\CS:\V\op\to\ttCat$ and $X\in\V$.
As in \Cref{sta:classification}, we denote, for a topological space $T$, the set of open subsets by $\Op(T)$.
And for a tt-category $\cT$, the set of (resp.\ radical) tt-ideals will be denoted by $\TI(\cT)$ (resp.\ $\RI(\cT)$).
\begin{lem}
\label{sta:radical-ideals-recollement}%
Let $i:Z\into X$ be a closed immersion in $\V$, with open complement $j:U\into X$.
Then the maps
\begin{align*}
  \TI(\CS(X))&\longrightarrow \TI(\CS(U))\times\TI(\CS(Z))\\
  \RI(\CS(X))&\longrightarrow \RI(\CS(U))\times\RI(\CS(Z))\\
       \cK&\longmapsto (j^*\cK,i^*\cK)
\end{align*}
are bijective. 
Moreover, the following square commutes (also with $\TI$ instead of $\RI$):
\[
\begin{tikzcd}[column sep=50pt]
\RI(\CS(X))
\ar[d, "\sim", "j^*\times i^*" left]
\ar[r, "\Supp"]
&
\Op(\cons{X})
\ar[d, "\sim", "j^{-1}\times i^{-1}" left]
\\
\RI(\CS(U))\times\RI(\CS(Z))
\ar[r, "\Supp\times\Supp"]
&
\Op(\cons{U})\times\Op(\cons{Z})
\end{tikzcd}
\]
\end{lem}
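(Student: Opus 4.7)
The plan is to decouple the two claims: first the bijection of (radical) tt-ideals, then the commuting square.

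For the bijection, I will first check that $j^*$ and $i^*$ send (radical) tt-ideals to (radical) tt-ideals. Using $j^*j_! = \id$ and the projection formula of \Cref{sta:projection-formulae}, one verifies directly that the essential image $j^*\cK$ is already thick and closed under tensoring with all of $\CS(U)$, hence a tt-ideal; symmetrically for $i^*\cK$ via $i^*i_* = \id$. I then write down the candidate inverse: given $(\cK_U, \cK_Z)$, set
\[
\widetilde\cK := \{M \in \CS(X) \mid j^*M \in \cK_U,\; i^*M \in \cK_Z\},
\]
which is a tt-ideal since $j^*, i^*$ are exact tt-functors. Round-tripping from the pair is immediate: the objects $j_!N$ for $N \in \cK_U$ and $i_*N$ for $N \in \cK_Z$ lie in $\widetilde\cK$ by virtue of $j^*j_! = \id$, $i^*j_! = 0$ and their symmetric counterparts, yielding $j^*\widetilde\cK \supseteq \cK_U$ and $i^*\widetilde\cK \supseteq \cK_Z$; the reverse inclusions are tautological.

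The crucial step is to show $\widetilde\cK = \cK$ when starting from $\cK$, i.e.\ that $j^*M \in j^*\cK$ and $i^*M \in i^*\cK$ together force $M \in \cK$. I plan to combine the recollement triangle $j_!j^*M \to M \to i_*i^*M \to$ from \Cref{recollement-bbd} with \Cref{sta:projection-formulae}: picking $K_U \in \cK$ with $j^*K_U \cong j^*M$ yields $j_!j^*M \cong j_!j^*K_U \cong j_!\one \otimes K_U \in \cK$, and symmetrically $i_*i^*M \in \cK$, so $M \in \cK$ by thickness. This two-step reduction---from $M$ to its recollement pieces, then rewriting these as tensors of $K_U, K_Z \in \cK$ with $j_!\one, i_*\one$---is the only subtle point. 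The radical variant follows by applying the same argument to $M^{\otimes n}$, and by lifting any $N \in \CS(U)$ with $N^{\otimes n} \in \cK_U$ to $j_!N$ in order to deduce $j_!N \in \cK$ and hence $N = j^*j_!N \in \cK_U$.

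For the commuting square, I use \Cref{rmk:constructible-topology}: since $U$ and $Z$ are constructible in $X$, they are clopen in $\cons{X}$, giving the topological decomposition $\cons{X} = \cons{U} \sqcup \cons{Z}$ and hence the vertical bijection on opens. Commutativity then reduces to the pointwise identities $\Supp(j^*M) = j^{-1}\Supp(M)$ and $\Supp(i^*M) = i^{-1}\Supp(M)$ for each $M \in \CS(X)$, both of which are immediate consequences of \Cref{sta:primes-functoriality} ($\pr_{f(x)} = (f^*)^{-1}\pr_x$). Taking unions over $M \in \cK$ and using that every object of $j^*\cK$ is of the form $j^*M$ (as established above) finishes the argument. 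The main obstacle is the projection-formula argument for $\widetilde\cK \subseteq \cK$; everything else is bookkeeping around the recollement.
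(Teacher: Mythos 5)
Your proposal is correct and follows essentially the same route as the paper: the same inverse $(\cK_U,\cK_Z)\mapsto (j^*)^{-1}(\cK_U)\cap(i^*)^{-1}(\cK_Z)$, the same key step rewriting $j_!j^*M$ as $j_!\one\otimes K_U$ via \Cref{sta:projection-formulae} and concluding with the recollement triangle, and the same reduction of the square's commutativity to the compatibility of $\rho_x$ with restriction. The only differences are presentational (you verify that $j^*\cK$ is an ideal and handle the radical case by hand, where the paper invokes fullness and essential surjectivity of the retractions $j^*$, $i^*$).
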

\begin{proof}
Note that $j^*$ and $i^*$ being retractions (to $j_!$ and $i_*$, respectively), they are both full and essentially surjective.
It follows that $j^*\times i^*$ indeed sends a tt-ideal to a tt-ideal.

Next, for $M\in\CS(X)$ and $\cI\in\TI(\CS(X))$ we have
\begin{equation}
\label{eq:open-closed}
j^*M\in j^*\cI \Longleftrightarrow j_!j^*M\in\cI \quad\text{(resp. }i^*M\in i^*\cI \Longleftrightarrow i_*i^*M\in\cI\text{)}.
\end{equation}
Indeed, one direction being obvious, assume $j^*M\in j^*\cI$.
Hence there is $N\in\cI$ with $j^*M=j^*N$.
By the projection formula of \Cref{sta:projection-formulae} we have
\[
j_!j^*M=j_!j^*N=j_!j^*\one\otimes N\in \cI
\]
as required.
The same argument applies to the parenthesized statement in~\eqref{eq:open-closed}.
Note that the projection formul\ae{} also show that $j^*\times i^*$ restricts to radical tt-ideals.

Together with the triangle $j_!j^*M\to M\to i_*i^*M\to^+$ of the recollement (\Cref{recollement-bbd}) we deduce from~\eqref{eq:open-closed} that
\[
M\in \cI\Longleftrightarrow j^*M\in j^*\cI\text{ and }i^*M\in i^*\cI
\]
and this easily gives the bijections in the statement.
(The inverse to $j^*\times i^*$ is given by $(\cL,\cM)\mapsto (j^*)^{-1}(\cL)\cap(i^*)^{-1}(\cM)$.)

For the commutativity of the diagram, assume $\cI\in\TI(\CS(X))$ and let us show that $\Supp(\cI)\cap U=\Supp(j^*\cI)$.
(Again, the argument on $Z$ is the same.)
This follows immediately from the commutative triangle, for each $x\in U$:
\[
\begin{tikzcd}
\CS(X)
\ar[r, "\rho_x"]
\ar[d, "j^*" left]
&
\CS(x)
\\
\CS(U)
\ar[ru, "\rho_x" right]
\end{tikzcd}
\]
\end{proof}

\begin{proof}[Proof of \Cref{sta:classification}]
In \Cref{sta:ideal-support-easy} we already showed $\Supp\circ\cI=\id_{\Op(\cons{X})}$ and it suffices to show $\cI\circ\Supp=\id_{\TI(\CS(X))}$. 
Thus let $\cK\in\TI(\CS(X))$ be a tt-ideal.
By \Cref{sta:ideal-support-easy} again, we know that $\cK\subseteq\cI\circ\Supp(\cK)$, and it suffices to show the reverse inclusion.
So let $M\in\cI_{\Supp(\cK)}=\cap_{\cK\subseteq\pr_x}\pr_x$, and let us show that $M\in\cK$.
By \Cref{sta:radical-ideals-recollement} and noetherian induction, it suffices to find a non-empty open subset $U\subseteq X$ such that $M|_U\in\cK|_U\subseteq\CS(U)$.
Consequently we may assume $X$ is regular, connected, and $M\in\CSls(X)$, by \bref{ax:cons}.
If $M=0$ then certainly $M\in\cK$ so let us assume $M\neq 0$.
If $x\in X$ denotes the generic point then \bref{ax:cons} implies that $M\notin\pr_x$.
In particular, there exists $L\in\cK$ with $L\notin\pr_x$.
By simplicity of $\CS(x)$ we must have $\rho_x(M)\in\ttid{\rho_x(L)}$\footnote{Here, $\ttid{S}$ denotes the tt-ideal generated by a set $S$.} and hence, by \Cref{sta:spreading-out} below, there exists a non-empty open $U\subseteq X$ such that $M|_U\in\ttid{L|_U}\subseteq\cK|_U\subseteq\CS(U)$.
This completes the proof.
\end{proof}

The following result is formulated in the language introduced in~\cite[\S\,8]{gallauer:tt-fmod}.
A special case, enough for our application in the proof of \Cref{sta:classification}, is when $\cT_\bullet$ is a filtered diagram with 2-colimit $\cT$.
\begin{lem}
\label{sta:spreading-out}%
Let $\cT_\bullet:I\to\ttCat$ be a pseudo-functor with $I$ conjoining, and $f:\cT_\bullet\to\cT$ a pseudo-natural transformation.
Assume that $f$ is surjective on morphisms and detects isomorphisms~\cite[Definition~8.4]{gallauer:tt-fmod}.
If $a,b\in\cT$ satisfy $a\in\ttid{b}$ there exist $i$ and lifts $a_i\in\cT_i$ of $a$ and $b_i\in\cT_i$ of $b$, such that $a_i\in\ttid{b_i}$.
\end{lem}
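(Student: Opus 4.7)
The plan is to proceed by structural induction on a witness to $a\in\ttid{b}$. Fix $b$ and let $\cA\subseteq\cT$ denote the full subcategory of those objects $a'$ satisfying the conclusion of the lemma: there exist $i\in I$ and lifts $a'_i,b_i\in\cT_i$ with $a'_i\in\ttid{b_i}$. I would show that $\cA$ is a thick subcategory of $\cT$ containing $b\otimes c$ for every $c\in\cT$. Since $\ttid{b}$ is by definition the smallest thick subcategory with this latter property, this already yields $\ttid{b}\subseteq\cA$, and hence the given $a$ lies in $\cA$.

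For the generators, pick lifts $b_{i_0}$ of $b$ and $c_{j_0}$ of $c$ (essential surjectivity being forced by surjectivity on morphisms applied to identities), use that $I$ is conjoining to transport both objects to a common target $k$, and observe that $b_k\otimes c_k\in\cT_k$ lifts $b\otimes c$ and lies in $\ttid{b_k}$. Closure of $\cA$ under shifts is immediate, as tt-functors commute with shifts. For closure under cones, given $a',a''\in\cA$ and a map $g\colon a'\to a''$ in $\cT$, pool all lifts at a common index $k$ so that $a'_k,a''_k\in\ttid{b_k}$; then invoke surjectivity on morphisms to produce a lift $g_l\colon a'_l\to a''_l$ at some $l\geq k$, and form $\cone(g_l)\in\cT_l$. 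Because the tt-functor $f_l$ preserves cones up to isomorphism, $\cone(g_l)$ lifts $\cone(g)$; as both $a'_l$ and $a''_l$ lie in $\ttid{b_l}$, so does $\cone(g_l)$.

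The main obstacle is closure under direct summands, where the detects-isomorphisms hypothesis enters decisively. Suppose $a'$ is a retract of $a''\in\cA$ via morphisms $i\colon a'\to a''$ and $p\colon a''\to a'$ with $p\circ i=\id_{a'}$. Transport everything to a common $\cT_k$ in which $a''_k\in\ttid{b_k}$, and lift $i$ and $p$ to morphisms $i_k\colon a'_k\to a''_k$ and $p_k\colon a''_k\to a'_k$ using surjectivity on morphisms. The composite $p_k\circ i_k\colon a'_k\to a'_k$ then maps under $f_k$ to the identity $\id_{a'}$, which is an isomorphism. By the detects-isomorphisms hypothesis, $p_l\circ i_l$ is already an isomorphism in $\cT_l$ for some $l\geq k$. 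Replacing $p_l$ by $(p_l\circ i_l)^{-1}\circ p_l$ yields a genuine retraction $a'_l\to a''_l\to a'_l$ in $\cT_l$, exhibiting $a'_l$ as a direct summand of $a''_l$. Hence $a'_l\in\ttid{b_l}$, and $\cA$ is closed under summands, completing the argument.
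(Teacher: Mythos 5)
Your argument is correct and is precisely the ``each building step of $a\in\ttid{b}$ can be successively lifted'' induction that the paper's proof only sketches (deferring details to the analogous \cite[Proposition~8.5]{gallauer:tt-fmod}): you verify closure under the generating operations, with surjectivity on morphisms handling cones and the detects-isomorphisms hypothesis rectifying lifted retractions. The one point to keep explicit is that you should fix a single lift $b_{i_0}$ of $b$ once and for all and always compare against its images under the transition functors, so that ``pooling at a common index'' really lands you in one tt-ideal $\ttid{b_k}$ rather than in ideals generated by \textsl{a priori} non-isomorphic lifts of $b$.
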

\begin{proof}
The relation $a\in\ttid{b}$ means that $a$ can be built from $b$ in finitely many steps.
Each of these steps can be successively lifted to some $\cT_i$.
(For a similar argument the reader is referred to the proof of~\cite[Proposition~8.5]{gallauer:tt-fmod}.)
\end{proof}

\section{Monoidal topological reconstruction}
\label{sec:toprec}

In this section we discuss applications of our results in \Cref{sec:main-result} to reconstructing varieties~$X$ from $\CS(X)$.
In \Cref{sta:main} we computed $\Spc(\CS(X))=\cons{X}$, the underlying set of~$X$ with the constructible topology.
The identity map $\cons{X}\to\zar{X}$ exhibits the former as the universal Stone space mapping to the latter.
Conversely, we may recover $\zar{X}$ from $\cons{X}$ together with the specialization relations in~$\zar{X}$~\cite[II.4]{MR698074}.
Our first goal (\Cref{sec:smashing-spectrum}) is to formalize this last process for tt-categories and their spectra.
In \Cref{sec:smashing-ideals} we specialize to generically simple constructible systems and prove \Cref{thrm:intro-spcs} of the introduction.

\subsection{The smashing spectrum}
\label{sec:smashing-spectrum}

\begin{rmk}
\label{rmk:spcs-informal}%
We start with an informal account of what the ensuing (rather abstract) discussion is supposed to achieve.
Let $U\subseteq \zar{X}$ be an open subset.
Then there is an object~$\one_U\in\CS(X)$ that, as we will prove in \Cref{sec:smashing-ideals}, is characterized by the following three properties:
\begin{enumerate*}[(1)]
\item Its support is precisely~$U$,
\item it comes with a morphism $\mu_U:\one_U\to\one$, and
\item the morphism $\mu_U\otimes \id:\one_U\otimes\one_U\isoto\one_U$ is invertible.
\end{enumerate*}
We call $\mu_U:\one_U\to\one$ an \emph{open idempotent} if it exhibits $\one_U$ as a $\otimes$-idempotent as in the last condition.
An open idempotent $\mu_{U'}$ that factors through $\mu_U$ is `smaller'.
Indeed, this is equivalent to $U'\subseteq U$.
And if we restrict to complements of irreducible closed subsets we obtain the specialization relation on the points of~$\zar{X}$.
In fact, we may recover $\zar{X}$ directly by letting the open subsets be the supports of open idempotents.

This makes sense in general tt-categories~$\cT$.
Open idempotents form a distributive lattice, and we call the associated spectrum the \emph{smashing spectrum} of~$\cT$ and denote it by $\Spcs(\cT)$.
(The name is justified by the correspondence between idempotents and smashing tt-ideals which we recall.)
Under certain restrictive hypotheses on~$\cT$, the canonical map $\Spc(\cT)\to\Spcs(\cT)$ exhibits the former as the universal Stone space mapping to the latter.
In particular, in the case of $\cT=\CS(X)$, we recover the canonical map $\cons{X}\to\zar{X}$.

Many other works have explored similar ideas, notably~\cite{MR3147415,MR2806103,MR4064108,MR4093065}.
\end{rmk}

\begin{notn}
\label{notn:T-small}%
Throughout this section $\cT$ denotes an essentially small tt-category.
\end{notn}

\begin{rmk}
\label{rmk:smashing-warning}%
We feel the need to accompany \Cref{notn:T-small} with a `warning'.
Originally, smashing ideals were studied mostly in the context of rigidly compactly generated tt-categories, particularly the stable homotopy category in algebraic topology.
In that context there are few smashing ideals in the subcategory of compact (equivalently, rigid) objects.
In other words, the idempotents are typically not compact.
Nevertheless, there \emph{are} small tt-categories in which the set of smashing ideals is informative, and it is these that we are ultimately interested in.
For a more precise `warning' see \Cref{rmk:smashing-topology}.
\end{rmk}

A convenient modern reference for the next couple remarks is~\cite{MR2806103} (except, again, that we don't assume our triangulated categories to be closed under coproducts).

\begin{notn}
\label{notn:smashing-ideal}
Recall that a tt-ideal $\cK$ is called \emph{smashing} if it is the kernel of a Bousfield localization $L:\cT\to\cT$ and if the right orthogonal complement~$\cK^\bot$ is a tt-ideal as well.
We obtain an exact triangle~$\Delta_{\cK}$ of endofunctors $\Gamma\to\id\to L$ with $\Gamma$ a colocalization, and the relations
\[
\ker(\Gamma)=\im(L)=\cK^\bot,\qquad \ker(L)=\im(\Gamma)=\cK.
\]
\end{notn}

\begin{rmk}
\label{rmk:smashing-idempotents}%
Of particular interest is the evaluation of $\Delta_{\cK}$ at the unit~$\one$.
The triangle
\[
\Delta_{\cK}(\one)\ =\ 
\left(
\oid:=\Gamma\one\to\one\to L\one=:\cid
\right)
\]
is idempotent in the sense that $\oid\otimes\oid\isoto\oid$ and $\cid\isoto\cid\otimes\cid$.
We call $\oid\to\one$ (or just $\oid$) the \emph{open idempotent} and $\one\to\cid$ (or just $\cid$) the \emph{closed idempotent}.
Conversely, starting with, say, an open idempotent~$\oid\to\one$, the completed exact triangle
\[
\oid\to\one\to\cid
\]
is idempotent and defines a smashing ideal $\ker(\cid\otimes-)=\ttid{\oid}=\oid\otimes\cT$.
(We sometimes denote the closed idempotent complementary to an open idempotent by $\oid^{\bot}:=\cid$, and similarly ${}^{\bot}\cid:=\oid$.)
This sets up a 1-to-1 correspondence between smashing ideals $\SI(\cT)$, open idempotents $\OID(\cT)$ (up to isomorphism) and closed idempotents $\CID(\cT)$ (up to isomorphism).
Here, a morphism $\oid\to\oid'$ of open idempotents is a morphism which makes the obvious triangle commute, and similarly for closed idempotents.
\end{rmk}

\begin{lem}
\label{sta:idempotent-morphisms}%
Let $\oid,\oid'$ be two open idempotents with complementary closed idempotents $\cid,\cid'$, respectively.
The following are equivalent:
\begin{enumerate}[(i)]
\item There exists a morphism of open idempotents $\oid\to\oid'$.
\item $\oid\in\ttid{\oid'}$
\item $\oid\otimes\cid'=0$
\item $\oid\otimes\oid'\isoto\oid$
\end{enumerate}
Moreover, if these conditions are satisfied then there is a \emph{unique} morphism of open idempotents $\oid\to\oid'$.
\end{lem}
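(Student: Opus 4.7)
The plan is to establish the equivalences by noting that (iii) $\iff$ (iv) and (ii) $\iff$ (iii) are essentially formal consequences of the idempotent triangle calculus recalled in \Cref{rmk:smashing-idempotents}, and then to close the loop by proving (i) $\iff$ (iii). Denote the structure maps $\mu\colon\oid\to\one$ and $\mu'\colon\oid'\to\one$.

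For (iii) $\iff$ (iv), tensor the defining triangle $\oid' \to \one \to \cid'$ with $\oid$ to obtain a distinguished triangle
\[
\oid \otimes \oid' \xrightarrow{\oid \otimes \mu'} \oid \to \oid \otimes \cid',
\]
whose first map is an isomorphism precisely when the third term vanishes. For (ii) $\iff$ (iii), one uses that the smashing ideal corresponding to $\oid'$ is $\ker(\cid' \otimes -)$, which is easily seen to coincide with $\oid' \otimes \cT$; and this subcategory, being a tt-ideal containing $\oid'$ and contained in any tt-ideal containing $\oid'$, equals $\ttid{\oid'}$. Thus $\oid \in \ttid{\oid'}$ is just $\oid \otimes \cid' = 0$.

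For (iii) $\Rightarrow$ (i) together with uniqueness, the key observation is the vanishing $\operatorname{Hom}(\oid, Z) = 0$ whenever $\oid \otimes Z = 0$: given $f\colon \oid \to Z$, tensoring with $\oid$ yields $\oid \otimes f = 0$, and naturality of $\mu \otimes (-)$ combined with the isomorphism $\mu \otimes \oid\colon \oid \otimes \oid \isoto \oid$ recovers $f$ from $\oid \otimes f$, forcing $f = 0$. Applying this with $Z = \cid'$ and $Z = \cid'[-1]$ (both allowed under (iii), since $\oid \otimes -$ commutes with shifts), the long exact sequence obtained by applying $\operatorname{Hom}(\oid, -)$ to $\oid' \to \one \to \cid'$ collapses to a bijection $\operatorname{Hom}(\oid, \oid') \isoto \operatorname{Hom}(\oid, \one)$. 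Hence $\mu$ lifts to a unique morphism $\phi\colon \oid \to \oid'$ of open idempotents.

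Conversely, for (i) $\Rightarrow$ (iii), I tensor the relation $\mu' \phi = \mu$ with $\cid'$ and use that $\oid' \otimes \cid' = 0$, which is built into the very idempotence of the triangle $\oid' \to \one \to \cid'$. This forces $\mu \otimes \cid' = 0$ as a morphism $\oid \otimes \cid' \to \cid'$, so the triangle
\[
\oid \otimes \cid' \xrightarrow{\mu \otimes \cid'} \cid' \to \cid \otimes \cid'
\]
has vanishing first map. Tensoring this triangle with $\oid$ and simplifying using $\oid \otimes \oid \cong \oid$ and $\oid \otimes \cid = 0$ yields a distinguished triangle $\oid \otimes \cid' \xrightarrow{0} \oid \otimes \cid' \to 0$; an isomorphism that is simultaneously zero must have vanishing domain, so $\oid \otimes \cid' = 0$. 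The main subtlety, and where I expect the proof to require most care, is the $\operatorname{Hom}$-vanishing argument supporting the uniqueness of $\phi$: since $\cT$ is not assumed closed, internal Homs are unavailable and one must instead leverage the idempotence of $\oid$ via naturality of $\mu \otimes (-)$.
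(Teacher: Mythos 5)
Your proof is correct and follows essentially the same route as the paper's: both rest on the standard idempotent calculus ($\oid'\otimes\cid'=0$, the factorization of $\mu\otimes\cid'$ through it, and the vanishing of Hom-groups out of $\oid$ into objects killed by $\oid\otimes-$). The only cosmetic differences are that the paper produces the morphism in (i) directly as the composite $\oid\isofrom\oid\otimes\oid'\to\oid'$ and handles uniqueness separately via $\hom(\oid,\cid'[-1])\cong\hom(\oid\otimes\oid',\cid'[-1])=0$, whereas your long exact sequence of $\hom(\oid,-)$ delivers existence and uniqueness of the lift of $\mu$ in one stroke.
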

\begin{proof}
The last three conditions are clearly equivalent, and they yield a morphism of open idempotents $\oid\isofrom\oid\otimes\oid'\to\oid'$.
Conversely, assume $\oid\to\oid'$ is such a morphism of open idempotents and let us prove that $\oid\otimes\cid'=0$.
Consider the exact sequence
\[
\hom(\oid\otimes\cid',\cid[-1]\otimes\cid')\to\hom(\oid\otimes\cid',\oid\otimes\cid')\to\hom(\oid\otimes\cid',\cid').
\]
The first term vanishes and the canonical morphism $\oid\otimes\cid'\to\cid'$ is zero since it factors through $\oid'\otimes\cid'=0$ thus the claim.

Uniqueness of a morphism $\oid\to\oid'$ of open idempotents follows from the fact that the group $\hom(\oid,\cid'[-1])\cong\hom(\oid\otimes\oid',\cid'[-1])$ vanishes.
\end{proof}
\begin{rmk}
\label{rmk:posets}%
Define meet and join operations on smashing ideals in terms of their associated idempotents: 
\[
\cK\wedge\cK' = \ttid{\oid(\cK)\otimes\oid(\cK')},\qquad \cK\vee\cK' = \ker(\cid(\cK)\otimes\cid(\cK')\otimes-).
\]
In fact, with obvious orderings of open and closed idempotents the correspondence of \Cref{rmk:smashing-idempotents} yields isomorphisms of lattices $\SI(\cT)\cong\OID(\cT)\cong\CID(\cT)$~\cite[Proposition~3.11]{MR2806103}.
\end{rmk}

\begin{notn}
\label{notn:lattice}%
Recall that a distributive lattice is a non-empty poset (viewed as a category) which has finite limits and colimits and these distribute.
A morphism of distributive lattices is an exact functor.
They thus form a category denoted $\DLat$.

The initial distributive lattice is the ordered set $\bool=\{0,1\}$.
The spectrum of a distributive lattice~$A$ is the set of morphisms
\[
\Spec(A):=\hom_{\DLat}(A,\bool)
\]
with the topology generated by sets of the form
\[
\supp(a):=\{p\mid p(a)=1\},\quad a\in A.
\]
This sets up an anti-equivalence between distributive lattices and the category $\CohSp$ of coherent spaces with coherent maps~\cite[Corollary~II.3.4]{MR698074}.
The inverse functor is given by sending a coherent space~$X$ to the lattice of its quasi-compact open subsets.
\end{notn}

\begin{exa}
\label{exa:spc-lattice}
The set of radical tt-ideals $\RI(\cT)$ ordered by inclusion is a distributive lattice.
In fact, it is a compactly generated poset whose compact objects form a distributive lattice.
In particular, it satisfies an infinite form of distributivity (it is a coherent frame).
To see this, note that the inclusion $\RI(\cT)\into\TI(\cT)$ admits a left adjoint $\sqrt{-}$ and that $\TI(\cT)$ is clearly a compactly generated poset.
(It is the Ind-completion of the lattice of finitely generated tt-ideals.)
Moreover, as the directed union of radical tt-ideals is radical, it follows that $\RI(\cT)$ is compactly generated, with compact objects $\RIc(\cT)$ the radicals of finitely generated tt-ideals.

A morphism of distributive lattices $p:\RIc(\cT)\to\bool$ extends uniquely to a colimit preserving exact functor $\bar{p}:\RI(\cT)\to\bool$ which may be identified with its kernel $(\bar{p})^{-1}(0)\subseteq\RI(\cT)$.
The latter is a prime ideal and hence principal (as in every frame) thus a further identification with a radical ideal in~$\cT$ and this is easily seen to be a prime ideal in~$\cT$.
Therefore, the spaces $\Spec(\RIc(\cT))$ and $\Spc(\cT)$ have the same underlying set.
Their topologies are dual in the sense of Hochster~\cite{hochster:spectral}, that is, they correspond to each other under the equivalence $X\mapsto X^*$ which fits into the following commutative square:
\[
\begin{tikzcd}
\DLat\op
\ar[r, <->, "\sim"]
\ar[d, <->, "\sim" right, "(-)\op" left]
&
\CohSp
\ar[d, <->, "\sim" left, "(-)^*" right]
\\
\DLat\op
\ar[r, <->, "\sim"]
&
\CohSp
\end{tikzcd}
\]
\end{exa}

\begin{lem}
\label{sta:smashing-lattice}%
The lattice of smashing ideals $\SI(\cT)$ is distributive, and $\cK\mapsto\sqrt{\cK}$ defines an injective homomorphism of distributive lattices $\sqrt{-}:\SI(\cT)\to\RIc(\cT)$.
\end{lem}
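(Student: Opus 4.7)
The plan is to deduce both assertions simultaneously by exhibiting $\sqrt{-}$ as an injective homomorphism of bounded posets-with-meet-and-join $\SI(\cT)\to\RIc(\cT)$; distributivity of $\SI(\cT)$ will then be inherited from the distributive lattice $\RIc(\cT)$ (\Cref{exa:spc-lattice}) via this injection. That $\sqrt{\cK}\in\RIc(\cT)$ is clear since $\cK=\ttid{\oid(\cK)}$ is singly generated (any $M\in\cK$ satisfies $M\cong\oid(\cK)\otimes M$), and preservation of $0$ and $1$ is automatic.

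For meets, I would first identify $\cK\wedge\cK'=\cK\cap\cK'$ as subcategories of $\cT$: the inclusion $\subseteq$ is clear, while $M\in\cK\cap\cK'$ gives $M\cong\oid(\cK)\otimes\oid(\cK')\otimes M\in\ttid{\oid(\cK)\otimes\oid(\cK')}$. Passing to radicals and using the general identity $\sqrt{A\cap B}=\sqrt{A}\cap\sqrt{B}$ then yields $\sqrt{\cK\wedge\cK'}=\sqrt{\cK}\cap\sqrt{\cK'}$, which is the meet in $\RIc(\cT)$.

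The main step will be preservation of joins, for which I would establish $\sqrt{\cK\vee\cK'}=\sqrt{\cK+\cK'}$. The inclusion $\supseteq$ is immediate since both $\cK$ and $\cK'$ are contained in $\cK\vee\cK'$. For $\subseteq$, given $M\in\cK\vee\cK'=\ker(\cid(\cK)\otimes\cid(\cK')\otimes-)$, tensoring the idempotent triangle for $\cK'$ with $\cid(\cK)\otimes M$ provides an isomorphism $\oid(\cK')\otimes\cid(\cK)\otimes M\isoto\cid(\cK)\otimes M$ since its third term vanishes by hypothesis. Hence in the triangle $\oid(\cK)\otimes M\to M\to\cid(\cK)\otimes M$ the outer terms lie in $\cK$ and $\cK'$ respectively, whence $M\in\cK+\cK'$. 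Combined with the formal identity $\sqrt{\cK+\cK'}=\sqrt{\cK}\vee_{\RIc}\sqrt{\cK'}$ (both sides being the smallest radical tt-ideal containing $\cK\cup\cK'$), this gives preservation of joins.

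Injectivity will then be a direct consequence of \Cref{sta:idempotent-morphisms}: if $\sqrt{\cK}=\sqrt{\cK'}$ then $\oid(\cK)\in\sqrt{\cK'}$, and since $\oid(\cK)^{\otimes n}\cong\oid(\cK)$ for all $n\geq 1$, we conclude $\oid(\cK)\in\cK'$, i.e.\ $\oid(\cK)\otimes\cid(\cK')=0$; symmetrically $\oid(\cK')\otimes\cid(\cK)=0$. The lemma then supplies unique mutually inverse morphisms of open idempotents, so $\cK=\cK'$. I do not expect a serious obstacle; the only step requiring genuine calculation is the extension argument for joins, where one must be mindful that $\SI(\cT)$ is \emph{not} a sublattice of $\TI(\cT)$ (joins in $\SI$ are defined via closed idempotents rather than as sums of ideals), which is why the proof must route through the radical.
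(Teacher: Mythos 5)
Your proposal is correct, and its skeleton---identifying $\cK\wedge\cK'$ with $\cK\cap\cK'$ and $\cK\vee\cK'$ with $\cK+\cK'$, then invoking \Cref{sta:idempotent-morphisms} for injectivity---coincides with the paper's. You diverge in two places. First, for the inclusion $\cK\vee\cK'\subseteq\cK+\cK'$ the paper invokes the Mayer--Vietoris triangle $\oid\otimes\oid'\otimes t\to(\oid\otimes t)\oplus(\oid'\otimes t)\to(\oid\vee\oid')\otimes t$ from the literature, whereas you tensor the idempotent triangle of $\cK'$ with $\cid(\cK)\otimes M$ to conclude that $\cid(\cK)\otimes M$ already lies in $\cK'$ and then read off $M$ from the triangle $\oid(\cK)\otimes M\to M\to\cid(\cK)\otimes M$; this is more elementary and self-contained. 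Second, the paper proves distributivity of $\SI(\cT)$ directly, again via the Mayer--Vietoris triangle applied to $t=\oid''$, while you deduce it from the injective lattice embedding into the distributive lattice $\RIc(\cT)$. That is legitimate---a lattice admitting an injective homomorphism into a distributive lattice is distributive---and there is no circularity, since distributivity is a property rather than a structure needed to define the homomorphism; it simply requires placing the injectivity step before the distributivity conclusion, which your ordering does. Two small points worth recording: your identity $\sqrt{A\cap B}=\sqrt{A}\cap\sqrt{B}$ is precisely the exactness of $\sqrt{-}:\TIc(\cT)\to\RIc(\cT)$ asserted in \Cref{exa:spc-lattice}; and ``preservation of $0$'' should be read as preservation of the bottom element $\sqrt{0}$ of $\RIc(\cT)$ (which may contain nonzero $\otimes$-nilpotent objects), not as the claim that $\sqrt{0}=0$. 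Neither affects the argument.
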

\begin{proof}
We factor the functor in the statement as $\SI(\cT)\into\TIc(\cT)\xto{\sqrt{-}}\RIc(\cT)$ and note that, by \Cref{exa:spc-lattice}, the second functor is exact.
For the first functor we need to show that $\cK\wedge\cK'=\cK\cap\cK'$ and $\cK\vee\cK'=\cK+\cK'$ for any smashing ideals~$\cK,\cK'$.
In each case, only one inclusion requires proof.
Thus let $t\in\cK\cap\cK'$.
We then have $\oid\otimes t\isoto t$ and $\oid'\otimes t\isoto t$ (with hopefully obvious notation), and therefore $\oid\otimes\oid'\otimes t\isoto t$ hence $t\in\cK\wedge\cK'$.
Now let $t\in\cK\vee\cK'$, that is, $\cid\otimes\cid'\otimes t=0$.
Let $\oid\vee\oid'$ be the open idempotent complementary to $\cid\otimes\cid'$.
By~\cite[Theorem~3.13]{MR2806103}, we have an exact Mayer-Vietoris triangle
\begin{equation}
\label{eq:mayer-vietoris}
\oid\otimes\oid'\otimes t\to(\oid\otimes t)\oplus(\oid'\otimes t)\to(\oid\vee\oid')\otimes t,
\end{equation}
and the last term is isomorphic to~$t$.
But the first term is in $\cK\cap\cK'$ and the second is in $\cK+\cK'$ thus the claim.

For distributivity in~$\SI(\cT)$, and with analogous notation, we need to show that
\[
(\oid\vee\oid')\otimes\oid''\in\cK\cap\cK''+\cK'\cap\cK''.
\]
Replacing $t$ by $\oid''$ in the triangle~(\ref{eq:mayer-vietoris}), the claim follows.

Finally, we show that $\sqrt{-}:\SI(\cT)\to\RIc(\cT)$ is injective.
This amounts to the following claim.
If $\alpha:\oid\to\oid'$ is a morphism of open idempotents, with $\oid'\in\radtt{\oid}$ then $\alpha$ is invertible.
But as $\oid'$ is an idempotent, $\oid'\in\radtt{\oid}$ implies $\oid'\in\ttid{\oid}$ and the claim follows from \Cref{sta:idempotent-morphisms}.
\end{proof}

\begin{defn}
\label{defn:smashing-spectrum}%
The \emph{smashing spectrum of $\cT$} is the spectrum of the lattice of smashing ideals:
\[
\Spcs(\cT):=\Spec(\SI(\cT))
\]
In other words, the underlying set consists of the homomorphisms $p:\SI(\cT)\to\bool$ and a basis for the topology is given by the sets $\{p\mid p(\cK)=1\}$ for $\cK\in\SI(\cT)$.
\end{defn}

We spell out some direct consequences of the definition.
\begin{prop}
\label{sta:spcs-space}%
\begin{enumerate}[(a)]
\item The space $\Spcs(\cT)$ is coherent.
\item The association $\cT\mapsto\Spcs(\cT)$ extends to a functor $\Spcs:\ttCat\op\to\CohSp$.
\item There is a canonical natural transformation of functors $\epsilon:\Spc\to\Spcs$.
\end{enumerate}
\end{prop}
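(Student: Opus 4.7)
The plan is to treat the three parts in order, using the preparatory results already established in \Cref{rmk:smashing-idempotents}, \Cref{sta:idempotent-morphisms}, \Cref{rmk:posets}, and \Cref{sta:smashing-lattice} (identifying $\SI(\cT)$ with the distributive lattice $\OID(\cT)$ of open idempotents, up to isomorphism) as well as the Stone-type duality recorded in \Cref{notn:lattice} between $\DLat$ and $\CohSp$.

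Part (a) is immediate: \Cref{sta:smashing-lattice} tells us that $\SI(\cT)$ is a distributive lattice, and then \Cref{notn:lattice} gives $\Spcs(\cT)=\Spec(\SI(\cT))\in\CohSp$. For Part (b), I would build functoriality through the presentation via open idempotents. A tt-functor $F:\cT\to\cT'$ preserves $\otimes$-idempotents and distinguished triangles, so it sends an open idempotent $\oid\to\one$ to the open idempotent $F(\oid)\to\one'$, with complementary closed idempotent $F(\cid)$. This descends to a well-defined map $\OID(\cT)\to\OID(\cT')$, and I would check it is a morphism of distributive lattices: the top and bottom are preserved trivially; meets are preserved because $F(\oid\otimes\oid')\cong F(\oid)\otimes F(\oid')$; and joins are preserved because the closed idempotent $F(\cid\otimes\cid')\cong F(\cid)\otimes F(\cid')$ complementary to $F(\oid\vee\oid')$ is also, by construction, the closed idempotent complementary to $F(\oid)\vee F(\oid')$, so the two open idempotents agree (uniquely, by \Cref{sta:idempotent-morphisms}). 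Transporting along $\OID\cong\SI$ from \Cref{rmk:posets} and applying $\Spec$ yields the desired contravariant functor $\Spcs:\ttCat\op\to\CohSp$; functoriality in $F$ is then automatic.

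For Part (c), I define $\epsilon_\cT\colon\Spc(\cT)\to\Spcs(\cT)$ by sending a prime tt-ideal $\mfp$ to the map $p_\mfp\colon\SI(\cT)\to\bool$ characterized by $p_\mfp(\cK)=1$ iff $\cK\not\subseteq\mfp$, equivalently (since $\cK=\oid_\cK\otimes\cT$) iff $\oid_\cK\notin\mfp$. This is a lattice homomorphism: the top and bottom conditions use $\one\notin\mfp$; preservation of meets follows because $\oid_{\cK\wedge\cK'}\cong\oid_\cK\otimes\oid_{\cK'}$ together with primeness of $\mfp$; and preservation of joins follows because $\cK\vee\cK'=\cK+\cK'$ is contained in $\mfp$ exactly when both $\cK$ and $\cK'$ are. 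To check continuity (indeed coherence), the preimage under $\epsilon_\cT$ of the basic open $\{p\mid p(\cK)=1\}\subseteq\Spcs(\cT)$ is $\{\mfp\mid\oid_\cK\notin\mfp\}=\supp(\oid_\cK)$; this is quasi-compact open in $\Spc(\cT)$ because the triangle $\oid_\cK\to\one\to\cid_\cK$ together with $\oid_\cK\otimes\cid_\cK=0$ forces $\Spc(\cT)=\supp(\oid_\cK)\sqcup\supp(\cid_\cK)$, exhibiting $\supp(\oid_\cK)$ as clopen. Finally, naturality of $\epsilon$ follows by unwinding definitions: for $F\colon\cT\to\cT'$, a prime $\mfp'\in\Spc(\cT')$ satisfies $\oid_\cK\in (F^*\mfp')\Leftrightarrow F(\oid_\cK)\in\mfp'\Leftrightarrow\oid_{F_*\cK}\in\mfp'$, which is exactly the commutativity of the relevant square.

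The most delicate point is the continuity argument in (c), since it relies on the non-obvious fact that the support of an open idempotent is actually clopen in $\Spc(\cT)$; everything else is a routine bookkeeping exercise given the lattice theory from \Cref{rmk:posets} and \Cref{sta:smashing-lattice}. The verification that $F$ preserves joins of open idempotents in (b) is the other place where a brief argument (rather than pure formality) is needed, but it is handled cleanly by the uniqueness of complements from \Cref{sta:idempotent-morphisms}.
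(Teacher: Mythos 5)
Your proposal is correct and follows essentially the same route as the paper: parts (a) and (b) are identical in substance, and for (c) both arguments hinge on the same key observation that $\supp(\oid_\cK)$ is clopen (hence quasi-compact open) in $\Spc(\cT)$ because the idempotent triangle forces $\Spc(\cT)=\supp(\oid_\cK)\sqcup\supp(\cid_\cK)$. The only cosmetic difference is that the paper obtains $\epsilon$ (and its naturality) for free by applying $\Spec$ to the natural lattice homomorphism $\sqrt{-}:\SI\to\RIc$ of \Cref{sta:smashing-lattice} and invoking Hochster duality, whereas you construct the same map pointwise and verify the lattice-homomorphism, continuity and naturality conditions by hand.
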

\begin{proof}
The first statement is clear.
On the other hand, the association $\cT\mapsto\SI(\cT)$ defines a functor $\ttCat\to\DLat$, acting on tt-functors $F:\cT\to\cT'$ by sending a closed (say) idempotent $\cid\in\cT$ to $F(\cid)\in\cT'$, thus the second statement.
Moreover, the association $\cT\mapsto\RIc(\cT)$ similarly defines a functor $\ttCat\to\DLat$, acting on tt-functors $F:\cT\to\cT'$ by sending $\cK$ to $\radtt{F(\cK)}$.
It is then clear that the homomorphism $\sqrt{-}:\SI\to\RIc$ of \Cref{sta:smashing-lattice} defines a natural transformation.
Composing with the spectrum equivalence we obtain a natural transformation
\[
\epsilon:\Spec(\RIc(-))\to\Spec(\SI(-))=\Spcs(-)
\]
and it suffices to show that $\epsilon_{\cT}$ is coherent also for the dual topology~$\Spc(\cT)$ (\Cref{exa:spc-lattice}).
But given a smashing ideal $\cK\in\SI(\cT)$ corresponding to the idempotent triangle $\oid\to\one\to\cid$ we have $\epsilon_{\cT}^{-1}(\supp(\cK))=\supp(\oid)=\Spc(\cT)\backslash\supp(\cid)$ which is a quasi-compact open in the dual topology.
\end{proof}

\begin{rmk}
\label{rmk:spcs-classifying-support}%
Consider a pair $(X,\sigma)$ where $X$ is a topological space and $\sigma$ a function that assigns to a smashing ideal $\cK\in\SI(\cT)$ a subset $\sigma(\cK)\subseteq X$.
It is called a \emph{support datum} on $\SI(\cT)$ if $\sigma$ is a lattice homomorphism to the frame~$\Op(X)$ of open subsets.
By the $\Spec$-$\Op$ adjunction~\cite[Theorem~II.1.4]{MR698074}, $\sigma$ is the same as a continuous map $X\to\Spcs(\cT)$.
In other words, $(\Spcs(\cT),\supp)$ is the universal support datum on~$\SI(\cT)$.
This is analogous to~\cite[Theorem~3.2]{balmer:spectrum} or~\cite[Theorem~4.3]{MR2280286}.
It follows that it is also \emph{classifying}, that is, the analogues of~\cite[Theorem~4.10]{balmer:spectrum} or~\cite[Corollary~5.2]{MR2280286} hold.
\end{rmk}

\begin{rmk}
\label{rmk:smashing-topology}%
In general the smashing spectrum can be a rather poor invariant.
For example as seen in the proof of \Cref{sta:spcs-space}, if $\Spc(\cT)$ is connected then $\Spcs(\cT)$ is a singleton space.
The following definition singles out a class of tt-categories~$\cT$ for which $\Spcs(\cT)$ is at least as rich an invariant as~$\Spc(\cT)$.
\end{rmk}

\begin{defn}
\label{defn:enough-idempotents}%
If $\oid$ is an open idempotent and $\cid$ is a closed idempotent then we call the tensor product $\oid\otimes\cid$ a \emph{locally closed idempotent}.
We say that $\cT$ \emph{has enough idempotents} if every tt-ideal is generated by locally closed idempotents.
\end{defn}

\begin{lem}
\label{sta:ID-basic}%
Assume $\cT$ has enough idempotents.
Then $\Spc(\cT)$ is equal to its dual $\Spc(\cT)^*=\Spec(\RIc(\cT))$.
In particular, it is a Stone space.
\end{lem}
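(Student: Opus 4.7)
The plan is to show that every support set $\supp(t) \subseteq \Spc(\cT)$ is clopen. Once this is established, the quasi-compact opens of $\Spc(\cT)$ are closed under taking complements, so the Hochster dual coincides with $\Spc(\cT)$ itself; and by \Cref{exa:spc-lattice} the dual is precisely $\Spec(\RIc(\cT))$. The equality with the Hochster dual then forces $\Spc(\cT)$ to agree with its own constructible topology, which is always a Stone space.

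The starting point is the behavior of complementary idempotents. For any exact triangle $\oid\to\one\to\cid$ with $\oid$ open and $\cid$ closed, the relation $\oid\otimes\cid=0$ combined with primality gives $\supp(\oid)\cap\supp(\cid)=\emptyset$; and the triangle itself, applied to a prime $\cP$, shows that $\oid$ and $\cid$ cannot simultaneously lie in $\cP$, so $\supp(\oid)\cup\supp(\cid)=\Spc(\cT)$. Thus $\supp(\oid)$ and $\supp(\cid)$ are mutually complementary closed sets, hence both clopen. For a locally closed idempotent $e=\oid\otimes\cid$ the multiplicativity of the support datum gives $\supp(e)=\supp(\oid)\cap\supp(\cid)$, so $\supp(e)$ is clopen as a finite intersection of clopens.

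Now for arbitrary $t\in\cT$: the hypothesis that $\cT$ has enough idempotents means $\ttid{t}$ is generated by locally closed idempotents $\{e_\alpha\}$. Because $t$ itself lies in this tt-ideal and the tt-ideal generated by a family is the directed union of the tt-ideals generated by its finite subfamilies, there exist finitely many $e_1,\dots,e_n$ with $t\in\ttid{e_1,\dots,e_n}$, and consequently $\ttid{t}\subseteq\ttid{e_1,\dots,e_n}$. This yields $\supp(t)=\supp(e_1)\cup\cdots\cup\supp(e_n)$, a finite union of clopens, hence clopen. Combining with the first paragraph finishes the argument.

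The main obstacle, to the extent there is one, is the passage from the \textsl{a priori} infinite family of locally closed idempotents generating $\ttid{t}$ to a finite subfamily still controlling $\supp(t)$; this is resolved not by any quasi-compactness argument on $\Spc(\cT)$ but simply by the fact that an element of a tt-ideal only involves finitely many generators in its construction. Everything else is a direct consequence of the complementary behavior of $\oid$ and $\cid$ encoded in \Cref{rmk:smashing-idempotents} and the standard multiplicativity of $\supp$.
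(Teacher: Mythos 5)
Your proof is correct and takes essentially the same route as the paper: both reduce to showing that $\supp(t)$ is a quasi-compact open (equivalently, clopen) by writing $\ttid{t}$ as generated by finitely many locally closed idempotents and using that complementary idempotents $\oid,\cid$ have complementary supports. The only difference is cosmetic — you spell out the finiteness reduction and the complementarity of $\supp(\oid)$ and $\supp(\oid^\bot)$, which the paper's proof uses implicitly.
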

\begin{proof}
Let $t\in\cT$ and consider $\supp(t)\subseteq\Spc(\cT)=:X$.
By our assumption, we have $\ttid{t}=\ttid{\oid_1\otimes\cid_1,\ldots,\oid_n\otimes\cid_n}$ for some locally closed idempotents $\oid_i\otimes\cid_i$, hence
\[
\supp(t)=\bigcup_i\supp(\oid_i)\cap\supp(\cid_i)=\bigcup_iX\backslash\supp(\oid_i^\bot)\cap X\backslash\supp({}^\bot\cid_i).
\]
This shows that every quasi-compact open in $\Spc(\cT)^*$ is also quasi-compact open in $\Spc(\cT)$.
By duality, we have $\Spc(\cT)=\Spc(\cT)^*$.
In particular, every quasi-compact open is also closed hence the space is Stone.
\end{proof}

\begin{prop}
\label{sta:idempotents-spcs}%
Assume that $\cT$ has enough idempotents.
Then the map $\epsilon:\Spc(\cT)\to\Spcs(\cT)$ is a bijection and exhibits the former as the universal Stone space mapping to the latter.
\end{prop}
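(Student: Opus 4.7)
The plan is to deduce the proposition from the fact that, under the enough-idempotents hypothesis, the smashing clopens $\{\supp(\oid):\oid\in\OID(\cT)\}$ Boolean-generate every clopen of $\Spc(\cT)$. Indeed, by \Cref{sta:ID-basic} the space $\Spc(\cT)$ is Stone, so its clopens coincide with its quasi-compact opens, and each such is of the form $\Spc(\cT)\setminus\supp(t)$ for some $t\in\cT$. Writing $\ttid{t}=\ttid{\oid_1\otimes\cid_1,\ldots,\oid_n\otimes\cid_n}$ by hypothesis, and using that the triangle ${}^\perp\cid_i\to\one\to\cid_i$ gives $\supp(\cid_i)=\Spc(\cT)\setminus\supp({}^\perp\cid_i)$, one obtains $\supp(t)=\bigcup_i\bigl(\supp(\oid_i)\cap(\Spc(\cT)\setminus\supp({}^\perp\cid_i))\bigr)$, a finite Boolean combination of smashing clopens.

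For injectivity of $\epsilon$, two distinct primes $\cP\neq\cQ$ in the Hausdorff space $\Spc(\cT)$ are separated by some clopen. Writing this clopen as a Boolean combination of smashing clopens in disjunctive normal form, some conjoined factor --- either $\supp(\oid)$ or its complement --- already separates $\cP$ from $\cQ$; either way, the smashing ideal $\ttid{\oid}$ is contained in exactly one of $\cP,\cQ$, giving $\epsilon(\cP)\neq\epsilon(\cQ)$. For surjectivity, given $p\in\Spcs(\cT)$, consider the family of clopens
\[
\cA \ =\ \{\supp(\oid)\mid p(\ttid{\oid})=1\}\cup\{\Spc(\cT)\setminus\supp(\oid)\mid p(\ttid{\oid})=0\}.
\]
Any finite subintersection that was empty would yield an inclusion $\bigwedge_i\ttid{\oid_i}\subseteq\bigvee_j\ttid{\oid_j}$ of smashing ideals, contradicting primality of $p$. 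Compactness of $\Spc(\cT)$ thus furnishes $\cP\in\bigcap\cA$, and unwinding the definitions shows $\epsilon(\cP)=p$.

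For the universal property, $\epsilon$ is induced by the lattice homomorphism $\sqrt{-}\colon\SI(\cT)\to\RIc(\cT)$ of \Cref{sta:smashing-lattice} and is therefore a spectral map of coherent spaces. Since the constructible-topology functor $(-)^*$ is right adjoint to the inclusion of Stone spaces into coherent spaces (with spectral maps), the map $\epsilon$ out of the Stone space $\Spc(\cT)$ factors uniquely through the Stone reflection as $\Spc(\cT)\to\Spcs(\cT)^*\to\Spcs(\cT)$. The lift $\Spc(\cT)\to\Spcs(\cT)^*$ is a continuous bijection of compact Hausdorff spaces, hence a homeomorphism, so $\epsilon$ identifies $\Spc(\cT)$ with the universal Stone space $\Spcs(\cT)^*$ mapping to $\Spcs(\cT)$.

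The main obstacle is the Boolean generation of clopens by smashing clopens in the first paragraph; this is where the enough-idempotents hypothesis really bites, forcing $\Spc(\cT)$ to realize the Stone reflection of $\Spcs(\cT)$. Everything else reduces to standard compactness and Stone duality arguments.
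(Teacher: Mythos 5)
Your proof is correct, and it is the Stone-dual of the argument in the paper. The paper works entirely on the lattice side: it observes that $\RIc(\cT)$ is a Boolean algebra (via \Cref{sta:ID-basic}), factors $\sqrt{-}:\SI(\cT)\to\RIc(\cT)$ through the free Boolean algebra $\beta\SI(\cT)$, and shows the induced map $\beta\SI(\cT)\to\RIc(\cT)$ is an isomorphism --- surjectivity coming from the enough-idempotents hypothesis and injectivity from the purely lattice-theoretic \Cref{sta:free-boolean-kernel}. You instead argue on the space side: your injectivity of $\epsilon$ (separating distinct primes by smashing clopens) is dual to the paper's surjectivity of $\beta\SI(\cT)\to\RIc(\cT)$, and your surjectivity of $\epsilon$ (finite intersection property plus compactness) is dual to its injectivity. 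Both arguments consume the hypothesis at the same place, namely in showing that supports of locally closed idempotents Boolean-generate the quasi-compact opens of $\Spc(\cT)$, which is exactly the computation in the proof of \Cref{sta:ID-basic}. The paper's route is shorter because \Cref{sta:free-boolean-kernel} packages the compactness argument once and for all; yours is more self-contained topologically and makes the separation of points visible. One step you compress slightly: to pass from an empty finite intersection to $\bigwedge_i\ttid{\oid_i}\subseteq\bigvee_j\ttid{\oid_j}$ in $\SI(\cT)$ you need that the injective lattice homomorphism $\sqrt{-}$ of \Cref{sta:smashing-lattice} reflects the order (which follows since $f(a)\le f(b)$ iff $f(a\wedge b)=f(a)$); worth saying explicitly.

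One notational caution: you write $(-)^*$ for ``the constructible-topology functor'' right adjoint to the inclusion of Stone spaces into coherent spaces, but in \Cref{exa:spc-lattice} the symbol $(-)^*$ denotes the Hochster dual, which is an involution of coherent spaces and does \emph{not} coincide with the Stone coreflection (patch topology) except on spaces that are already Stone. Your factorization $\Spc(\cT)\to\Spcs(\cT)^{\wedge}\to\Spcs(\cT)$ through the Stone coreflection is the right one, and the continuous-bijection-of-compact-Hausdorff-spaces argument finishes it; just do not call that coreflection $(-)^*$.
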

\begin{proof}
The fact that $\Spec(\RIc(\cT))$ is Stone (\Cref{sta:ID-basic}) says precisely that $\RIc(\cT)$ is a Boolean algebra~\cite[Corollary~II.4.4]{MR698074}.
Thus the homomorphism $\sqrt{-}:\SI(\cT)\to\RIc(\cT)$ factors uniquely through the free Boolean algebra $\beta\SI(\cT)$ on the distributive lattice~$\SI(\cT)$.
The conclusion of the proposition is precisely that the homomorphism of Boolean algebras
\[
\bar{f}:\beta\SI(\cT)\to\RIc(\cT)
\]
is bijective.
(Here, we use that $\Spc(\cT)=\Spec(\RIc(\cT))$, by \Cref{sta:ID-basic}).

For surjectivity of~$\bar{f}$, let $\cK\in\RIc(\cT)$ so that $\cK$ is the radical of a finitely generated tt-ideal.
By our assumption, we may choose these generators to be locally closed idempotents~$\oid_i\otimes\cid_i$.
Letting $\beta(\oid)\in\beta\SI(\cT)$ denote the image of the smashing ideal $\ttid{\oid}$ in $\beta\SI(\cT)$, we find that $\bar{f}$ maps
\[
\bigvee_{i}\beta(\oid_i)\wedge\beta({}^\bot\cid_i)^\bot
\]
to $\cK$.
Injectivity of~$\bar{f}$ follows from \Cref{sta:smashing-lattice,sta:free-boolean-kernel}.
\end{proof}

\begin{lem}
\label{sta:free-boolean-kernel}%
Let $f:A\into B$ be an injective homomorphism of distributive lattices with $B$ a Boolean algebra.
Assume that the induced homomorphism $\bar{f}:\beta A\onto B$ is surjective.
Then $\bar{f}$ is an isomorphism.
\end{lem}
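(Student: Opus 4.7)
The plan is to establish the injectivity of $\bar f$: combined with the assumed surjectivity, this will give the isomorphism, since in the category of Boolean algebras any bijective homomorphism is automatically an isomorphism. Because Boolean algebra homomorphisms are morphisms of Boolean rings, injectivity of $\bar f$ is equivalent to triviality of its kernel, so I would reduce the problem to showing that $\bar f(x) = 0$ implies $x = 0$.

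To exploit this reduction, I would next write $x$ in disjunctive normal form. Let $\eta: A \to \beta A$ denote the unit of the free/forgetful adjunction. Since $\eta$ is a morphism of distributive lattices and $\beta A$ is generated by $\eta(A)$ as a Boolean algebra, every element admits a presentation
\[
x = \bigvee_{i=1}^n \bigl(\eta(a_i) \wedge \eta(b_i)^\bot\bigr)
\]
with $a_i, b_i \in A$. Indeed, De Morgan pushes complements onto generators, distributivity produces a disjunctive normal form with literals in $\eta(A) \cup \eta(A)^\bot$, and meet- (resp.\ join-) preservation of $\eta$ collapses each clause of positive (resp.\ complemented) literals into a single $\eta(a_i)$ (resp.\ $\eta(b_i)^\bot$).

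With this normal form in hand, the assumption $\bar f(x) = 0$ translates into $\bigvee_i \bigl(f(a_i) \wedge f(b_i)^\bot\bigr) = 0$ in $B$. Each summand is then itself zero (an element bounded above by $0$ equals $0$), so $f(a_i) \leq f(b_i)$ for every $i$. Since $f$ preserves meets, $f(a_i \wedge b_i) = f(a_i) \wedge f(b_i) = f(a_i)$, and injectivity of $f$ yields $a_i \wedge b_i = a_i$, i.e.\ $a_i \leq b_i$ in $A$. Applying the lattice homomorphism $\eta$ preserves this order, so $\eta(a_i) \wedge \eta(b_i)^\bot = 0$ in $\beta A$ for every $i$, and therefore $x = 0$. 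The only genuinely substantive step is the normal-form presentation in $\beta A$, which is a classical consequence of the universal property of the free Boolean algebra on a distributive lattice; once it is in hand, the remainder is a short transfer via injectivity of $f$, and I do not anticipate any further obstacle.
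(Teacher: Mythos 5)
Your proof is correct and follows essentially the same route as the paper's: write an element of $\ker(\bar f)$ in disjunctive normal form $\bigvee_i \eta(a_i)\wedge\eta(b_i)^\bot$, deduce $f(a_i\wedge b_i)=f(a_i)$ from each clause vanishing in $B$, and use injectivity of $f$ to conclude $a_i\le b_i$ and hence that the element is zero. No differences worth noting.
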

\begin{proof}
Let $\bar{a}\in\ker(\bar{f})$.
Then $\bar{a}=\bigvee_i\beta(a_i)\wedge\beta(b_i)^\bot$ for some $a_i,b_i\in A$.
As $\bar{f}(\bar{a})=0$ we must have $\bar{f}(\beta(a_i))\wedge\bar{f}(\beta(b_i))^\bot=0$ for each~$i$, which is equivalent to
\[
f(a_i\wedge b_i)=f(a_i)\wedge f(b_i)=f(a_i).
\]
By injectivity of~$f$ we obtain $a_i\wedge b_i=a_i$ and hence $\beta(a_i)\wedge\beta(b_i)^\bot=0$ as required.
\end{proof}

\subsection{Classification}
\label{sec:smashing-ideals}

We apply the constructions and results of \Cref{sec:smashing-spectrum} to generically simple constructible systems~$\CS$.
For simplicity we will assume that all schemes $X\in\V$ are varieties over some field~$\kk$.
In addition, we will assume that $\CS$ satisfies the following `Lefschetz type' assumption.
Let $n\in\ZZ$ and $X\in\V$ a variety.
We denote by $\Hm^n(X)$ the group $\hom_{\CS(X)}(\one,\one[n])$.
\begin{itemize}
\item \namedlabel{ax:lef}{(0-Lef)} For all integral curves~$V\in\V$, $\Hm^{-1}(V)=0$ and if $U\subseteq V$ is a dense open subset then $\Hm^0(V)\to\Hm^0(U)$ is a monomorphism.
\end{itemize}

\begin{rmk}
All examples in \Cref{sec:examples} satisfy \bref{ax:lef}.
Indeed, when $X$ is regular the groups $\Hm^n(X)$ may be identified with, respectively, the Betti, $\ell$-adic, de\,Rham, absolute Hodge, and motivic cohomology of~$X$ in degree~$n$ (and weight~$0$), and these conditions are then easy properties of the cohomology theories.
Namely, the cohomology groups vanish in negative degrees and the $0$th cohomology group only depends on the number of connected components.

For Betti and $\ell$-adic cohomology such an identification continues to hold for general varieties.
For motivic sheaves the precise (expected) formula for these groups on general schemes is computed in~\cite[Proposition~11.1]{ayoub:etale-realization}.
For curves, all that is required is (besides localization) finite base change, more precisely that $f_*$ commutes with restriction along open and closed immersions for $f$ finite (namely, the normalization of the curve; see last paragraph on p.~98 in \loccit).
One either already believes that holonomic $\cD$-modules and mixed Hodge modules satisfy finite base change or it is a (lengthy but) straightforward exercise to deduce it from base change for finite morphisms between regular varieties.
\end{rmk}

The following two results are our principal findings in this section.
\begin{thrm}
\label{sta:zariski-reconstruction}%
Let $\CS$ be a generically simple constructible system on~$\V$ satisfying \bref{ax:lef}.
The composite functor
\[
\V\xto{\CS}\ttCat\op\xto{\Spcs}\TopSpc
\]
is naturally isomorphic to $X\mapsto\zar{X}$.
In particular, for every $X\in\V$ we have
\[
\Spcs(\CS(X))\cong\zar{X}.
\]
\end{thrm}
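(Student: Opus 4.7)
The plan is to identify the distributive lattice $\SI(\CS(X))$ of smashing tensor ideals with $\Op(\zar{X})$, the lattice of Zariski-open subsets of $X$. Since $X$ is noetherian, $\Op(\zar{X})$ already is the lattice of quasi-compact opens of $\zar{X}$, so applying $\Spec$ (see \Cref{notn:lattice}) immediately yields $\Spcs(\CS(X))\cong\zar{X}$, and naturality in $X\in\V$ will follow from base change in the recollement.

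For each Zariski-open $U\subseteq X$ with inclusion $j:U\into X$, the counit $\one_U:=j_!j^*\one\to\one$ is an open idempotent: idempotency follows from \Cref{sta:projection-formulae} together with $j^*j_!\isoto\id$, and its support equals $U$ since $i^*\one_U=0$ on the closed complement while $j^*\one_U\cong\one$. This yields an injective lattice homomorphism $\Phi:\Op(\zar{X})\to\SI(\CS(X))$ sending $U\mapsto\ttid{\one_U}$.

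The heart of the proof is surjectivity of $\Phi$: every open idempotent $\oid\to\one$ must have \emph{Zariski-open} support. Granting this, $\oid$ and $\one_{\Supp(\oid)}$ generate the same tt-ideal by \Cref{sta:classification}, hence the same smashing ideal, hence are isomorphic as open idempotents (\Cref{rmk:smashing-idempotents}). Since $\Supp(\oid)$ is clopen in $\cons{X}$ (its complement is the support of the complementary closed idempotent, using simplicity at each point), hence constructible in $\zar{X}$, it is enough to prove stability under generalization. Given $x\in\Supp(\oid)$ and a generalization $\eta$ of $x$, I would replace $X$ by $\cl(\eta)$, replace $x$ by a closed specialization $x'\in\cl(x)$ still in $\Supp(\oid)$ (this exists because a constructible subset of the integral scheme $\cl(x)$ containing its generic point contains a dense open), and pick by a standard moving argument in varieties over $\kk$ an integral curve $C\subseteq\cl(\eta)$ through $x'$ whose generic point lies outside a fixed proper closed set containing $\Supp(\oid|_{\cl(\eta)})$. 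This reduces the problem to the following curve case, where \bref{ax:lef} finally enters.

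\textbf{Curve case:} If $V\in\V$ is an integral curve and $\oid\to\one$ is an open idempotent on $V$ vanishing at the generic point, then $\oid=0$. By \bref{ax:loc}, $\oid\cong i_*N$ for $i:Z\into V$ the complementary finite set of closed points $\{z_j\}$; then $\CS(Z)$ is a finite product of the $\CS(z_j)$, and the idempotent structure descends to each factor, so by \bref{ax:simple} each $N_j\cong 0$ or $N_j\cong\one$. If some $N_j\cong\one$, then $(\tilde k_j)_*\one$ (with $\tilde k_j:\{z_j\}\into V$) is a summand of $\oid$, and the component $\mu_j:(\tilde k_j)_*\one\to\one_V$ of $\oid\to\one$ must be nonzero---otherwise the iso $\oid\otimes\oid\isoto\oid$ would fail on the summand $(\tilde k_j)_*\one\otimes(\tilde k_j)_*\one\cong(\tilde k_j)_*\one$. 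However, applying $\hom(-,\one_V)$ to the localization triangle $(\ell_j)_!\one\to\one\to(\tilde k_j)_*\one$ for $\ell_j:V\setminus\{z_j\}\into V$ and invoking \bref{ax:lef}---vanishing of $\Hm^{-1}(V\setminus\{z_j\})$ and injectivity of $\Hm^0(V)\to\Hm^0(V\setminus\{z_j\})$---gives $\hom((\tilde k_j)_*\one,\one_V)=0$, a contradiction. Hence all $N_j=0$ and $\oid=0$. This curve case, together with the geometric moving argument used to reduce to it, is the main obstacle; naturality of the resulting isomorphism then follows easily from base change $f^*\one_U\cong\one_{f^{-1}(U)}$ for $f:X\to Y$ in $\V$.
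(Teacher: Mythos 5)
Your proof is correct, and its crux---that every open idempotent in $\CS(X)$ has Zariski-open support, with \bref{ax:lef} entering only through a dimension-one computation---is the same as the paper's (\Cref{sta:open-idempotent-open}); indeed your curve case is, after passing to the complementary closed idempotent, the paper's $\dim=1$ step (the vanishing $\hom(i_*\one,\one)\cong\Hm^0(X,Z)=0$ via the same long exact sequence). The genuine difference lies in the reduction to dimension one. You show the constructible set $\Supp(\oid)$ is stable under generalization by producing an integral curve through a suitable closed point of $\Supp(\oid)$ whose generic point avoids it; the paper instead runs an induction on dimension, using prime avoidance and Krull's Hauptidealsatz to find, inside each maximal ideal, a height-one prime lying in the support of the complementary closed idempotent, applying the induction hypothesis to its (codimension-one, not one-dimensional) closure, and concluding by Jacobson-ness. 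The two reductions are nearly dual, and the one step you leave unproved---the ``standard moving argument'' furnishing an integral curve through a given closed point not contained in a given proper closed subset---is a true and citable fact for arbitrary $k$-varieties, but its proof is essentially the paper's iterated prime-avoidance argument; so the paper's route is self-contained where yours outsources the geometric input (and a Bertini-style proof of that input would need care over finite fields or for non-quasi-projective $X$). Finally, your top-level packaging---an explicit lattice isomorphism $\Op(\zar{X})\cong\SI(\CS(X))$ followed by Stone duality---is a mild streamlining of the paper's passage through ``enough idempotents'' and \Cref{sta:idempotents-spcs}, which the paper uses to identify the underlying set of $\Spcs(\CS(X))$ with that of $\Spc(\CS(X))$.
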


\begin{cor}
\label{sta:variety-reconstruction}%
Let $X\in\Var$ be a proper normal variety of dimension at least two and assume $k$ is uncountable algebraically closed of characteristic zero.
Let $\CS$ be a generically simple constructible system on~$X$ satisfying \bref{ax:lef}.
Then the scheme~$X$ is completely determined by the tt-category~$\CS(X)$.
\end{cor}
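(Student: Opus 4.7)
The plan is to chain together two reconstruction results: the one provided by \Cref{sta:zariski-reconstruction} of this article, and the topological reconstruction theorem of Koll\'ar--Lieblich--Olsson--Sawin~\cite[Theorem~5.1.2]{kollar-lieblich-olsson-sawin:topological-reconstruction}. The first step is to note that given only the tt-category $\CS(X)$, one can intrinsically form its smashing spectrum $\Spcs(\CS(X))$ (as an object of $\TopSpc$), since this construction is purely categorical. Applying \Cref{sta:zariski-reconstruction} we recover from $\CS(X)$ the underlying Zariski topological space $\zar{X}$ of the scheme~$X$.

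The second step is to invoke~\cite[Theorem~5.1.2]{kollar-lieblich-olsson-sawin:topological-reconstruction}, whose hypotheses match those of the corollary precisely: $X$ is proper and normal of dimension at least two, and $k$ is uncountable algebraically closed of characteristic zero. That result asserts that for such $X$, the underlying topological space $\zar{X}$ determines the scheme $X$ (up to isomorphism). Composing these two reconstructions yields the claim.

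There is essentially no obstacle here beyond correctly stringing together the two inputs, since \Cref{sta:zariski-reconstruction} has already done the tt-theoretic work and~\cite{kollar-lieblich-olsson-sawin:topological-reconstruction} handles the purely geometric step. The only point worth flagging in the write-up is that \Cref{sta:zariski-reconstruction} is stated for generically simple constructible systems satisfying \bref{ax:lef}, which is exactly what is hypothesized on $\CS$ in the corollary, so no further verification is required.
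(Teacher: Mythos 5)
Your proposal matches the paper's proof exactly: recover $\zar{X}$ from $\CS(X)$ via the smashing spectrum using \Cref{sta:zariski-reconstruction}, then apply \cite[Theorem~5.1.2]{kollar-lieblich-olsson-sawin:topological-reconstruction} to recover the scheme from its Zariski topological space. Nothing further is needed.
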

\begin{proof}
This follows from \Cref{sta:zariski-reconstruction} together with~\cite[Theorem~5.1.2]{kollar-lieblich-olsson-sawin:topological-reconstruction}.
\end{proof}

\begin{rmk}
\label{rmk:comparison-KLOS}%
Let us compare \Cref{sta:zariski-reconstruction} with~\cite[Proposition~5.4.5]{kollar-lieblich-olsson-sawin:topological-reconstruction}.
There, the authors reconstruct $\zar{X}$ from the \emph{abelian} category of constructible sheaves on~$X$, without the tensor structure.
Since this category forms the heart of the bounded derived constructible category, we may see the tensor structure in our work as playing the role of the t-structure in theirs.
Another difference between these two results is that we insist on characteristic zero fields as coefficients, while their argument also works with coefficients in a finite field.\footnote{Strictly speaking, their statement also imposes further restrictions on the field~$k$ and the variety~$X$.
However, these don't seem to enter the argument.}
On the other hand, our result applies to other theories besides constructible sheaves, and the reconstruction is arguably more systematic, obtained as a consequence of classification results for these tensor triangulated categories.
\end{rmk}

\begin{rmk}
\label{rmk:autoequivalences}%
Let $X\in\Var$ be as in \Cref{sta:variety-reconstruction}.
It is natural to ask to what extent the scheme~$X$ is determined by the triangulated category~$\CS(X)$ \emph{without the tensor structure}.
This is in the spirit of Bondal-Orlov's reconstruction theorem regarding the bounded derived category of coherent sheaves~\cite{MR1818984}.
Similarly, it would be interesting to study the group of autoequivalences of the triangulated category~$\CS(X)$.
\end{rmk}

\begin{rmk}
\label{rmk:universal-homeomorphisms}%
Let $f:X\to Y$ be a universal homeomorphism between algebraic $k$-varieties.
In the theories $\CS$ considered in \Cref{sec:examples}, the induced functor $f^*:\CS(Y)\to\CS(X)$ is an equivalence of tt-categories.
On the other hand, mere homeomorphisms can fail to induce equivalences.
For example, let $Y$ be the nodal curve $y^2=x^3+x^2$ over~$\CC$ and $Y^{\textup{nor}}=\affine^1_\CC$ its normalization.
Denote by $X$ the open subscheme of $Y^{\textup{nor}}$ with one of the preimages of the singular point in $Y$ removed.
The induced morphism $f:X\to Y$ is a Zariski homeomorphism (but not universally closed).
If the functor $f^*:\Dbc(Y;\QQ)\to\Dbc(X;\QQ)$ were an equivalence, its quasi-inverse would have to be~$f_*$.
However, one easily checks that $f_*\one\not\cong\one$.
\end{rmk}

The rest of the section is devoted to proving \Cref{sta:zariski-reconstruction}.
Fix $X\in\Var$ and $\CS$ a generically simple constructible system on~$X$ satisfying \bref{ax:lef}.

\begin{rmk}
\label{rmk:smashing-spectrum-outline}%
Let $\cK\subseteq\CS(X)$ be a tt-ideal.
By \Cref{sta:classification}, its support is a union of locally closed subsets $V_i$ of~$\zar{X}$, $i\in I$.
It follows that $\cK=\ttid{\one_{V_i}\mid i\in I}$ (recall \Cref{rmk:radical-automatic}).
In other words, the tt-category $\CS(X)$ has enough idempotents in the sense of \Cref{defn:enough-idempotents}.
\Cref{sta:idempotents-spcs,sta:main} then tell us that the underlying set of $\Spcs(\CS(X))$ is the set of points of~$X$ (and such that the Boolean algebra generated by its opens is the constructible topology).
So, our goal is to show that the only open idempotents in $\CS(X)$ are of the form $\one_U\to \one$ where $U\subset X$ is a Zariski open subset.
\end{rmk}
We start with the following simple observation.
\begin{lem}
\label{sta:idempotent-can-form}%
  Let $\cid$ be a closed idempotent in $\CS(X)$ with $\Supp(\cid)=U\subseteq \zar{X}$ an open subset.
Then there exists an isomorphism $\cid\cong \one_U$ such that the composite $\one_U\to\one\to\cid\cong\one_U$ is the identity.
\end{lem}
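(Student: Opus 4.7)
The plan is to exploit the recollement for the open-closed decomposition $U \xinto{j} X \xfrom{i} Z$, where $Z := X \setminus U$ is the closed complement, and to show that $\cid$ is ``supported on~$U$'' in the sense that $i^*\cid = 0$ while $j^*\cid \cong \one$.

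First I would analyse supports. Let $\oid \to \one \to \cid \to \oid[1]$ denote the idempotent triangle associated to~$\cid$ (see \Cref{rmk:smashing-idempotents}). For each $x \in X$, the tt-category $\CS(x)$ is simple by \bref{ax:simple}, so its only smashing ideals are $0$ and the whole category; hence its only open/closed idempotents (up to isomorphism) are $0$ and $\one$. Applying $\rho_x$ to the idempotent triangle therefore forces exactly one of $\rho_x(\oid), \rho_x(\cid)$ to vanish, with the other isomorphic to $\one$. Since $\Supp(\cid) = U$ by hypothesis, this identifies $\Supp(\oid) = Z$. As $\rho_x$ factors through $i^*$ (resp.\ $j^*$) when $x \in Z$ (resp.\ $x \in U$), the supports $\Supp(i^*\cid) \subseteq Z$ and $\Supp(j^*\oid) \subseteq U$ are both empty, and the classification \Cref{sta:classification} then forces $i^*\cid = 0$ and $j^*\oid = 0$.

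Now I apply the recollement triangle $j_!j^* \to \id \to i_*i^*$ to $\cid$: since $i^*\cid = 0$, the counit $\varepsilon_\cid \colon j_!j^*\cid \to \cid$ is an isomorphism. Separately, applying $j^*$ to the idempotent triangle and using $j^*\oid = 0$ shows that $j^*\one \to j^*\cid$ is an isomorphism in $\CS(U)$; applying $j_!$ yields an isomorphism $\one_U = j_!j^*\one \isoto j_!j^*\cid$, and composing with $\varepsilon_\cid$ produces an isomorphism $\alpha \colon \one_U \isoto \cid$. By naturality of the counit $\varepsilon$ applied to the morphism $\one \to \cid$, this $\alpha$ coincides with the composite $\one_U = j_!j^*\one \xto{\varepsilon_\one} \one \to \cid$ from the statement. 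Taking $\alpha^{-1}$ as the claimed isomorphism $\cid \cong \one_U$ then makes the displayed composite equal to $\alpha^{-1} \circ \alpha = \id$.

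No serious obstacle is anticipated: once the classification \Cref{sta:classification} and the formal recollement yoga of \Cref{recollement-bbd} are in hand, everything is bookkeeping. The one subtlety worth flagging is the very last step, where naturality of the counit is needed in order to guarantee that it is precisely the \emph{canonical} composite $\one_U \to \one \to \cid$ that is an isomorphism, rather than merely producing some abstract isomorphism $\cid \cong \one_U$.
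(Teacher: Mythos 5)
Your proof is correct, and it is essentially the paper's argument written in recollement language: the paper instead considers the commutative square with corners $\one_U$, $\one$, $\one_U\otimes\cid$, $\cid$ and observes that the left and top arrows are invertible because $\supp(\one_U)=\supp(\cid)$, which via the projection formula $\one_U\otimes\cid\cong j_!j^*\cid$ of \Cref{sta:projection-formulae} is exactly your pair of isomorphisms $\one_U\isoto j_!j^*\cid\xto{\varepsilon_{\cid}}\cid$. The naturality-of-the-counit step you rightly flag at the end corresponds in the paper to the commutativity of that tensor square, so both write-ups resolve the ``canonical composite'' subtlety in equivalent ways.
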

\begin{proof}
Consider the following commutative square
\[
\begin{tikzcd}
\one_U\otimes\cid
\ar[r]
&
\cid
\\
\one_U
\ar[u]
\ar[r]
&
\one
\ar[u]
\end{tikzcd}
\]
where all morphisms are the obvious ones.
By our assumption and \Cref{sta:main} we know $\supp(\one_U)=\supp(\cid)$ which implies that the left vertical arrow and the top horizontal arrow are both invertible.
\end{proof}

\begin{prop}
\label{sta:open-idempotent-open}%
Let $\oid$ be an open idempotent in~$\CS(X)$.
Then $\Supp(\oid)$ is an open subset of $\zar{X}$.
\end{prop}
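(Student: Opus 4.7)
The plan is to argue by contradiction, reducing to an explicit computation on an integral curve where axiom \bref{ax:lef} delivers a decisive $\hom$-vanishing. Suppose $\Supp(\oid)$ is not open in $\zar{X}$. Since it is constructible by \Cref{sta:support-constructible}, it fails to be stable under generization, so there is a specialization $b\leadsto a$ with $a\in\Supp(\oid)$ and $b\notin\Supp(\oid)$. Replacing $X$ by the integral closed subscheme $\overline{\{b\}}$ (which lies in $\V$) and $\oid$ by its restriction, I reduce to the case where $X$ is integral with generic point $b\notin\Supp(\oid)$, so $\overline{\Supp(\oid)}\subsetneq X$ is a proper closed subset containing some closed point $x\in\Supp(\oid)$.

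Next, I would reduce to a curve. Using standard geometry of varieties (e.g.\ successive general hyperplane sections in a projective embedding through $x$), one can find an integral curve $C\subseteq X$ with $x\in C$ whose generic point $\eta$ lies outside $\overline{\Supp(\oid)}$. Pulling back along $C\hookrightarrow X$ yields an open idempotent $\oid|_C$ in $\CS(C)$ whose support $\Supp(\oid)\cap C$ is a nonempty constructible subset of $C$ missing $\eta$, hence a nonempty finite set of closed points $W=\{x_1,\ldots,x_n\}$. Locating this curve is the most geometric step and, in my view, the main obstacle, although it is classical for varieties over a field.

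Working now inside $\CS(C)$, since $\oid|_C$ has empty support on $U=C\setminus W$ we get $j^*(\oid|_C)=0$ (by \Cref{sta:classification}), so the recollement triangle gives $\oid|_C\cong i_*i^*(\oid|_C)$. Iterating \bref{ax:loc} identifies $\CS(W)$ with the product of the simple tt-categories $\CS(\{x_k\})$, whose only open idempotents up to isomorphism are $0$ and $\one$; hence $\oid|_C\cong\bigoplus_{k\in I}\one_{\{x_k\}}$ in the extension-by-zero notation of \Cref{notn:extension-by-zero}, for some nonempty subset $I\subseteq\{1,\ldots,n\}$.

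Finally I would invoke \bref{ax:lef}. For $k\in I$ set $V=C\setminus\{x_k\}$, a dense open integral curve in $C$. Applying $\hom_{\CS(C)}(-,\one)$ to the localization triangle $\one_V\to\one_C\to\one_{\{x_k\}}$ and using $j_!\dashv j^*$ produces the exact sequence
\[
\Hm^{-1}(V)\longrightarrow\hom(\one_{\{x_k\}},\one)\longrightarrow\Hm^0(C)\longrightarrow\Hm^0(V),
\]
whose left term vanishes and whose right map is injective, both by \bref{ax:lef}. Thus $\hom(\one_{\{x_k\}},\one)=0$, and the structure map $\mu\colon\oid|_C\to\one$ is zero; but $\mu\otimes\id\colon\oid|_C\otimes\oid|_C\to\oid|_C$ must be invertible, forcing $\oid|_C=0$ and contradicting $I\neq\emptyset$.
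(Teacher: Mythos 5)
Your proof is correct, but it is organized rather differently from the paper's. The paper works throughout with the complementary \emph{closed} idempotent $\cid$, replaces $X$ by the closure of $\Supp(\cid)$, and proves ``dense support implies full support'' by induction on $\dim X$: the base case $\dim X=1$ is the \bref{ax:lef} computation, and the induction step produces, through any closed point, a codimension-one integral subvariety whose generic point lies in $\Supp(\cid)$ via prime avoidance and Krull's Hauptidealsatz. You instead argue by contradiction with the open idempotent itself: a failure of generization-stability is restricted to a single integral curve $C$ through a closed point of $\Supp(\oid)$ with generic point outside it, where $\oid|_C$ is identified with $\one_W$ for a finite nonempty closed $W$ (using \bref{ax:simple} at the points of $W$ in place of the paper's \Cref{sta:idempotent-can-form}), and $\hom(\one_W,\one)=0$ from \bref{ax:lef} kills the structure map $\oid|_C\to\one$, hence $\oid|_C$. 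The cohomological endgame is the same computation in mirror image (the paper kills the section $\one_Z\to\one$, you kill the map $\one_W\to\one$). What your route buys is a cleaner logical structure with no induction and the contradiction localized on one curve; what it costs is the external geometric input ``through every closed point of an integral variety there is an integral curve not contained in a given proper closed subset,'' which you rightly flag as the main obstacle --- for general (not necessarily quasi-projective) varieties one should pass to an affine neighborhood and run exactly the paper's prime-avoidance argument iteratively, so the two proofs ultimately rest on the same commutative algebra. One small point to make explicit: the vanishing of objects with empty support (your step $j^*(\oid|_C)=0$) is the case $\cK=\{0\}$ of the identity $\cI\circ\Supp=\id$ in \Cref{sta:classification}, so the citation is legitimate.
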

\begin{proof}
We find it more convenient to work with the associated closed idempotent $\cid$.
Let $V=\Supp(\cid)$ and 
let $\bar{V}$ be the closure of~$V$.
Then $\cid_{\bar{V}}\cong\cid$ hence we may replace $X$ by $\bar{V}$ and assume that $V$ is dense in~$X$.
To show that $V=X$ we may assume $X$ is integral.
Indeed, if $X'\subseteq X$ is an irreducible component then $\cid|_{X'}$ is a closed idempotent in $C(X')$ with support $\Supp(\cid|_{X'})=V\cap X'$.

We proceed by induction on the dimension of $X$.
By constructibility, $V$ contains a dense open subset $U$ of $X$.
If $\dim(X)=0$ then $X$ is discrete and there is nothing to show.
If $\dim(X)=1$ then $V$ is a dense open, with closed complement~$Z$.
By \Cref{sta:idempotent-can-form}, we have a retraction $\one\to \one_V$ to the canonical morphism $\one_V\to \one$ and thus a section $\one_Z\to \one$ to the canonical morphism $\one\to \one_Z$.
By adjunction, this section corresponds to a morphism $\one\to i_*i^!\one$ where $i:Z\into X$ denotes the closed embedding.
Consider the long exact sequence induced by the second triangle in \Cref{recollement-bbd},
\[
\cdots\to\Hm^{-1}(V)\to\Hm^0(X,Z)\to\Hm^0(X)\to\Hm^0(V)\to\cdots,
\]
where $\Hm^0(X,Z):=\hom(\one,i_*i^!\one)$.
By \bref{ax:lef}, the first term vanishes and the penultimate arrow is a monomorphism.
We conclude that the section $\one_Z\to\one$ is zero and so $V=X$ as desired.

We now assume $\dim(X)>1$.
Since $V$ being closed is a local property we may assume $X=\Spec(A)$ is affine.
It follows that there are at most finitely many primes $\mfp_1,\ldots,\mfp_n$ of height~$1$ not in $V$.
Let $\mfm$ be a maximal ideal in $A$.
In particular, $\mfm$ cannot be contained in one of the $\mfp_i$. 
By the prime avoidance lemma, there exists $f\in\mfm\backslash\left(\mfp_1\cup\cdots\cup\mfp_n\right)$.
Choose a minimal prime $f\in\mfq\subseteq\mfm$. 
By Krull's Hauptidealsatz, $\mfq$ is of height~$1$.
Therefore it belongs to~$V$.
Let $X':=\cl(\mfq)$ and set $\cid':=\cid|_{X'}$.
Our induction hypothesis applied to $(X',\cid')$ implies that $V$ contains $\cl(\mfq)$, so in particular $\mfm\in V$.
This shows that the constructible subset~$V$ contains all maximal ideals of~$A$ and hence $V=\Spec(A)=X$ as required ($A$ is Jacobson).
\end{proof}

\begin{proof}[Proof of \Cref{sta:zariski-reconstruction}]
From \Cref{sta:main,sta:spcs-space} we deduce a natural transformation $\cons{(-)}\to\Spcs(\CS(-))$ of functors $\V\to\TopSpc$.
As explained in \Cref{rmk:smashing-spectrum-outline}, it is componentwise bijective and the only thing that remains to be shown is that the support of every open idempotent in $\CS(X)$ is Zariski open.
This was shown in \Cref{sta:open-idempotent-open}.
\end{proof}

\printbibliography[heading=bibintoc]
\end{document}